\newtheorem{theo}{Theorem}[section]
\newtheorem{lemm}[theo]{Lemma}
\newtheorem{prop}[theo]{Proposition}
\newtheorem{rema}[theo]{Remark}
\numberwithin{equation}{section}
\begin{document}

\title[elastic wave scattering by biperiodic structures]{Convergence of the PML
solution for elastic wave scattering by biperiodic structures}

\author{Xue Jiang}
\address{School of Science, Beijing University of Posts and Telecommunications,
Beijing 100876, China.}
\email{jxue@lsec.cc.ac.cn}

\author{Peijun Li}
\address{Department of Mathematics, Purdue University, West Lafayette, IN 47907,
USA.}
\email{lipeijun@math.purdue.edu}

\author{Junliang Lv}
\address{School of Mathematics, Jilin University, Changchun 130012, China. }
\email{lvjl@jlu.edu.cn}

\author{Weiying Zheng}
\address{LSEC, NCMIS, Academy of Mathematics and System
Sciences, CAS; University of Chinese Academy of Sciences, Beijing,
100190, China.}
\email{zwy@lsec.cc.ac.cn}

\thanks{The research of XJ was supported in part by China NSF grant 11401040 and
by the Fundamental Research Funds for the Central Universities 24820152015RC17.
The research of PL was supported in part by the NSF grant DMS-1151308. The
research of JL was partially supported by the China NSF grants 11126040 and
11301214. The author of WZ was supported in part by China NSF 91430215 and by
the National Magnetic Confinement Fusion Science Program (2015GB110003).}

\subjclass[2010]{65N30, 78A45, 35Q60}

\keywords{Elastic wave equation, biperiodic structure, perfectly matched
layer, transparent boundary condition}

\begin{abstract}
This paper is concerned with the analysis of elastic wave scattering of a
time-harmonic plane wave by a biperiodic rigid surface, where the wave
propagation is governed by the three-dimensional Navier equation. An exact
transparent boundary condition is developed to reduce the scattering problem
equivalently into a boundary value problem in a bounded domain. The perfectly
matched layer (PML) technique is adopted to truncate the unbounded physical
domain into a bounded computational domain. The well-posedness and exponential
convergence of the solution are established for the truncated PML problem by
developing a PML equivalent transparent boundary condition. The proofs rely on
a careful study of the error between the two transparent boundary operators. The
work significantly extend the results from the one-dimensional periodic
structures to the two-dimensional biperiodic structures. Numerical experiments
are included to demonstrate the competitive behavior of the proposed method.
\end{abstract}

\maketitle

\section{Introduction}

Scattering theory in periodic structures has many important applications in
diffractive optics \cite{bcm-01, bdc-josa95}, where the periodic structures are
often named as gratings. The scattering problems have been studied extensively
in periodic structures by many researchers for all the commonly encountered
waves including the acoustic, electromagnetic, and elastic waves 
\cite{a-jiea99, a-mmas99, b-sjna95, b-sjam97, cf-tams91, df-jmaa92, eh-mmas10,
eh-mmmas12, lwz-ip15, wbllw-sjna15}. The governing equations of these waves are
known as the Helmholtz equation, the Maxwell equations, and the Navier equation,
respectively. In this paper, we consider the scattering of a time-harmonic
elastic plane wave by a biperiodic rigid surface, which is also called a
two-dimensional or crossed grating. The elastic wave scattering problems have
received ever-increasing attention in both engineering and mathematical
communities for their important applications in geophysics and seismology. The
elastic wave motion is governed by the three-dimensional Navier equation. A
fundamental challenge of this problem is to truncate the unbounded physical
domain into a bounded computational domain. An appropriate boundary condition is
needed on the boundary of the truncated domain to avoid artificial wave
reflection. We adopt the perfectly matched layer (PML) technique to handle this
issue.

The research on the PML technique has undergone a tremendous development since
B\'{e}renger proposed a PML for solving the time-dependent Maxwell equations
\cite{b-jcp94}. The basis idea of the PML technique is to surround the domain of
interest by a layer of finite thickness fictitious material which absorbs all
the waves coming from inside the computational domain. When the waves
reach the outer boundary of the PML region, their values are so small that
the homogeneous Dirichlet boundary conditions can be imposed. Various
constructions of PML absorbing layers have been proposed and investigated
for the acoustic and electromagnetic wave scattering problems \cite{bw-sjna05,
bp-mc07, cm-sjsc98, ct-g01, cw-motl94, hsz-sjma03, ls-c98, ty-anm98}. In
particular, combined with the PML technique, an effective adaptive finite
element method was proposed in \cite{bcw-josa05, cw-sjna03} to solve the
two-dimensional diffraction grating problem where the one-dimensional grating
structure was considered. Due to the competitive numerical performance, the
method was quickly adopted to solve many other scattering problems including the
obstacle scattering problems \cite{cl-sjna05, cc-mc08} and the three-dimensional
diffraction grating problem \cite{blw-mc10}. However, the PML technique is much
less studied for the elastic wave scattering problems \cite{hsb-jasa96},
especially for the rigorous convergence analysis. We refer to \cite{bpt-mc10,
cxz-mc} for recent study on convergence analysis of the elastic obstacle
scattering problems. 

Recently, we have proposed an adaptive finite element method combining
with the PML technique to solve the elastic scattering problem in
one-dimensional periodic structures \cite{jllz}. Using the quasi-periodicity of
the solution, we develop a transparent boundary condition and formulate the
scattering problem equivalently into a boundary value problem in a bounded
domain. Following the complex coordinate stretching, we study the truncated PML
problem and show that it has a unique weak solution which converges
exponentially to the solution of the original scattering problem. 

The purpose of this paper is to extend our previous work on the one-dimensional
periodic structures in \cite{jllz} to the two-dimensional biperiodic
structures. We point out that the extension is nontrivial because the more
complicated three-dimensional Navier equation needs to be considered. The
analysis is mathematically more sophisticated and the numerics
is computationally more intense. This work presents an important application of
the PML method for the scattering problem of the elastic wave equation. The
elastic wave equation is complicated due to the coexistence of compressional and
shear waves that have different wavenumbers. To take into account this feature,
we introduce two potential functions, one scalar and one vector, to split the
wave field into its compressional and shear parts via the Helmholtz
decomposition. As a consequence, the scalar potential function satisfies the
Helmholtz equation while the vector potential function satisfies the Maxwell
equation. Using these two potential functions, we develop an exact transparent
boundary condition to reduce the scattering problem from an open domain into a
boundary value problem in a bounded domain. The energy conservation is proved
for the propagating wave modes of the model problem and is used for
verification of our numerical results. The well-posedness and exponential
convergence of the solution are established for the truncated PML problem by
developing a PML equivalent transparent boundary condition. The proofs rely on
a careful study of the error between the two transparent boundary operators.
Two numerical examples are also included to demonstrate the competitive behavior
of the proposed method.

The paper is organized as follows. In section 2, we introduce the model problem
of the elastic wave scattering by a biperiodic surface and formulate it into a
boundary value problem by using a transparent boundary condition. In section 3,
we introduce the PML formulation and prove the well-posedness and convergence of
the truncated PML problem. In section 4, we discuss the numerical implementation
of our numerical algorithm and present some numerical experiments to illustrate
the performance of the proposed method. The paper is concluded with some general
remarks in section 5.

\section{Problem formulation}

In this section, we introduce the model problem and present an exact transparent
boundary condition to reduce the problem into a boundary value problem in a
bounded domain. The energy distribution will be studied for the diffracted
propagating waves of the scattering problem.

\subsection{Navier equation}

Let $\boldsymbol r=(x_1, x_2)^\top$ and $\boldsymbol x=(x_1, x_2,
x_3)^\top$. Consider the elastic scattering of a time-harmonic plane wave by a
biperiodic surface $\Gamma_f=\{\boldsymbol x\in\mathbb R^3: x_3=f(\boldsymbol
r)\}$, where $f$ is a Lipschitz continuous and biperiodic function with period
$(\Lambda_1, \Lambda_2)$ in $(x_1, x_2)$. Denote by $\Omega_f=\{\boldsymbol
x\in\mathbb R^3: x_3>f(\boldsymbol r)\}$ the open space above $\Gamma_f$. Let
$h$ be a constant satisfying $h>\max_{\boldsymbol r\in\mathbb
R^2} f(\boldsymbol r)$. Denote $\Omega=\{\boldsymbol{x}\in\mathbb{R}^3: 0<x_1<\Lambda_1,\,
0<x_2<\Lambda_2,\, f(\boldsymbol r)<x_3<h\}$ and
$\Gamma_h=\{\boldsymbol{x}\in\mathbb{R}^3: 0<x_1<\Lambda_1,\, 0<x_2<\Lambda_2,\,
x_3=h\}$. Let $\Omega_h=\{\boldsymbol{x}\in\mathbb{R}^3: 0<x_1<\Lambda_1,\,
0<x_2<\Lambda_2,\, x_3>h\}$ be the open space above $\Gamma_h$.

The propagation of a time-harmonic elastic wave is governed by the
Navier equation:
\begin{equation}\label{une}
 \mu\Delta\boldsymbol{u}+(\lambda+\mu)\nabla\nabla\cdot\boldsymbol{u}
+\omega^2\boldsymbol{u}=0\quad\text{in} ~ \Omega_f,
\end{equation}
where $\boldsymbol{u}=(u_1, u_2,u_3)^\top$ is the displacement vector of the
total elastic wave field, $\omega>0$ is the angular frequency, $\mu$ and
$\lambda$ are the Lam\'{e} constants satisfying $\mu>0$ and $\lambda+\mu>0$.
Assuming that the surface $\Gamma_f$ is elastically rigid, we have
\begin{equation}\label{bc}
 \boldsymbol{u}=0\quad\hbox{on}~\Gamma_f.
\end{equation}
Define
\[
 \kappa_{1}=\frac{\omega}{(\lambda+2\mu)^{1/2}}\quad\text{and}\quad
\kappa_{2}=\frac{\omega}{\mu^{1/2}},
\]
which are known as the compressional wavenumber and the shear wavenumber,
respectively.

Let the scattering surface $\Gamma_f$ be illuminated from above by a
time-harmonic compressional plane wave:
\[
 \boldsymbol{u}^{\rm inc}(\boldsymbol x)=\boldsymbol{q}e^{{\rm
i}\kappa_{1}\boldsymbol{x}\cdot\boldsymbol{q}},
\]
where $\boldsymbol{q}=(\sin\theta_1\cos\theta_2,\,\sin\theta_1\sin\theta_2,\,
-\cos\theta_1)^\top$ is the propagation direction vector, and $\theta_1,
\theta_2$ are called the latitudinal and longitudinal incident angles satisfying
$\theta_1\in[0, \pi/2), \theta_2\in[0,2\pi]$. It can be verified that the
incident wave also satisfies the Navier equation:
\begin{equation}\label{uine}
\mu\Delta\boldsymbol{u}^{\rm inc}+(\lambda
+\mu)\nabla\nabla\cdot\boldsymbol{u}^{\rm inc}
+\omega^2\boldsymbol{u}^{\rm inc}=0\quad\text{in} ~ \Omega_f.
\end{equation}

\begin{rema}
 The scattering surface may be also illuminated by a time-harmonic shear plane
wave:
\[
 \boldsymbol{u}^{\rm inc}=\boldsymbol p e^{{\rm i}\kappa_2\boldsymbol
x\cdot\boldsymbol q},
\]
where $\boldsymbol p$ is the polarization vector satisfying $\boldsymbol
p\cdot\boldsymbol q=0$. More generally, the scattering surface can be
illuminated by any linear combination of the time-harmonic compressional and
shear plane waves. For clarity, we take the time-harmonic compressional plane
wave as an example since the results and analysis are the same for other forms
of the incident wave.
\end{rema}

Motivated by uniqueness, we are interested in a quasi-periodic solution of
$\boldsymbol{u}$, i.e., $\boldsymbol{u}(\boldsymbol{x})e^{-{\rm
i}\boldsymbol{\alpha}\cdot\boldsymbol{r}}$ is biperiodic in $x_1$ and $x_2$ with
periods $\Lambda_1$ and $\Lambda_2$, respectively.
Here $\boldsymbol{\alpha}=(\alpha_1,\alpha_2)^\top$
with $\alpha_1=\kappa_{1}\sin\theta_1\cos\theta_2,
\alpha_2=\kappa_{1}\sin\theta_1\sin\theta_2$. In addition, the
following radiation condition is imposed: the total displacement
$\boldsymbol{u}$ consists of bounded outgoing waves plus the incident wave in
$\Omega_h$.

We introduce some notation and Sobolev spaces. Let $\boldsymbol{u}=(u_1,
u_2,u_3)^\top$ be a vector function. Define the
Jacobian matrix of $\boldsymbol{u}$:
\[
 \nabla\boldsymbol{u}=\begin{bmatrix}
                       \partial_{x_1} u_1 & \partial_{x_2} u_1&\partial_{x_3} u_1\\
                       \partial_{x_1} u_2 & \partial_{x_2} u_2&\partial_{x_3} u_2\\
                       \partial_{x_1} u_3 & \partial_{x_2} u_3&\partial_{x_3} u_3
                      \end{bmatrix}.
\]
Define a quasi-biperiodic functional space
\begin{align*}
 H^1_{\rm qp}(\Omega)=\{u\in H^1(\Omega):
&u(x_1+n_1\Lambda_1,x_2+n_2\Lambda_2, x_3)\\
&=u(x_1,x_2,x_3)e^{{\rm i}(n_1\alpha_1\Lambda_1+n_2\alpha_2\Lambda_2)},\,
n=(n_1, n_2)^\top\in\mathbb{Z}^2\},
\end{align*}
which is a subspace of $H^1(\Omega)$ with the norm $\|\cdot\|_{H^1(\Omega)}$.
For any quasi-biperiodic function $u$ defined on $\Gamma_h$, it admits the
Fourier series expansion:
\[
 u(\boldsymbol{r}, h)=\sum_{n\in\mathbb{Z}^2}u^{(n)}(h)e^{{\rm
i}\boldsymbol{\alpha}^{(n)}\cdot\boldsymbol{r}},
\]
where $\boldsymbol{\alpha}^{(n)}=(\alpha^{(n)}_1, \alpha^{(n)}_2)^\top,
\alpha_1^{(n)}=\alpha_1+2\pi n_1/\Lambda_1, \alpha_2^{(n)}=\alpha_2+2\pi
n_2/\Lambda_2$, and
\[
u^{(n)}(h)=\frac{1}{\Lambda_1\Lambda_2}\int_0^{\Lambda_1}\int_0^{\Lambda_2}
u(\boldsymbol r,h)e^{-{\rm i}\boldsymbol{\alpha}^{(n)}\cdot\boldsymbol{r}}{\rm
d}\boldsymbol r.
\]
We define a trace functional space $H^s(\Gamma_h)$ with the norm
given by
\[
 \|u\|_{H^s(\Gamma_h)}=\Bigl(\Lambda_1\Lambda_2
\sum_{n\in\mathbb{Z}^2}(1+|\boldsymbol{\alpha}^{(n)}|^2)^s
|u^{(n)}(h)|^2\Bigr)^{1/2}.
\]
Let $H^1_{\rm qp}(\Omega)^3$ and $H^s(\Gamma_h)^3$ be the Cartesian product
spaces equipped with the corresponding 2-norms of $H^1_{\rm qp}(\Omega)$ and
$H^s(\Gamma_h)$, respectively. It is known that $H^{-s}(\Gamma_h)^3$ is
the dual space of $H^s(\Gamma_h)^3$ with respect to the
$L^2(\Gamma_h)^3$ inner product
\[
\langle \boldsymbol{u},
\boldsymbol{v}\rangle_{\Gamma_h}=\int_{\Gamma_h}\boldsymbol{u}\cdot
\bar{\boldsymbol{v}}\,{\rm d}\boldsymbol{r},
\]
where the bar denotes the complex conjugate.

\subsection{Boundary value problem}

We wish to reduce the problem equivalently into a boundary value problem in
$\Omega$ by introducing an exact transparent boundary condition on $\Gamma_h$.

The total field $\boldsymbol{u}$ consists of the incident field
$\boldsymbol{u}^{\rm inc}$ and the diffracted field
$\boldsymbol{v}$, i.e.,
\begin{equation}\label{tf}
 \boldsymbol{u}=\boldsymbol{u}^{\rm inc}+\boldsymbol{v}.
\end{equation}
Subtracting \eqref{uine} from \eqref{une} and noting \eqref{tf}, we obtain the
Navier equation for the diffracted field
$\boldsymbol{v}$:
\begin{equation}\label{vne}
\mu\Delta\boldsymbol{v}+(\lambda +\mu)\nabla\nabla\cdot\boldsymbol{v}
+\omega^2\boldsymbol{v}=0\quad\text{in} ~ \Omega_h.
\end{equation}
For any solution $\boldsymbol{v}$ of \eqref{vne}, we introduce the Helmholtz
decomposition to split it into the compressional and shear parts:
\begin{equation}\label{hdv}
 \boldsymbol{v}=\nabla\phi +\nabla\times\boldsymbol{\psi},
   \quad \nabla\cdot\boldsymbol{\psi}=0,
\end{equation}
where $\phi$ is a scalar potential function and $\boldsymbol{\psi}$
is a vector potential function. Substituting \eqref{hdv} into \eqref{vne} gives
\[
 (\lambda +2\mu)\nabla\left(\Delta\phi +\omega^2\phi
\right)+\mu\nabla\times(\Delta\boldsymbol{\psi}+\omega^2\boldsymbol{\psi})=0,
\]
which is fulfilled if $\phi$ and $\boldsymbol\psi$ satisfy the Helmholtz
equation:
\begin{equation}\label{he}
 \Delta\phi +\kappa_{1}^2\phi=0,\quad \Delta\boldsymbol{\psi}
+\kappa_{2}^2\boldsymbol{\psi}=0.
\end{equation}
It follows from $\nabla\cdot\boldsymbol\psi=0$ and \eqref{he} that the vector
potential function $\boldsymbol\psi$ satisfies the Maxwell equation:
\[
 \nabla\times(\nabla\times\boldsymbol\psi)-\kappa_2^2\boldsymbol\psi=0.
\]

Since $\boldsymbol{v}$ is a quasi-biperiodic function, we have from \eqref{hdv}
that $\phi$ and $\boldsymbol{\psi}=(\psi_{1},\psi_{2},\psi_{3})^\top$ are also
quasi-biperiodic functions. They have the Fourier series expansions:
\[
 \phi(\boldsymbol x)=\sum_{n\in\mathbb{Z}^2}\phi^{(n)}(x_3)e^{{\rm
i}\boldsymbol{\alpha}^{(n)}\cdot\boldsymbol{r}},\quad
 \boldsymbol{\psi}^{(n)}(\boldsymbol x)
 =\sum_{n\in\mathbb{Z}^2}\boldsymbol{\psi}^{(n)}(x_3)e^{{\rm
i}\boldsymbol{\alpha}^{(n)}\cdot\boldsymbol{r}}.
\]
Plugging the above Fourier series into \eqref{he} yields
\[
 \frac{{\rm d}^2\phi^{(n)}(x_3)}{{\rm
d}x_3^2}+\bigl(\beta^{(n)}_{1}\bigr)^2\phi^{(n)}(x_3)=0,\quad
 \frac{{\rm d}^2\boldsymbol{\psi}^{(n)}(x_3)}{{\rm
d}x_3^2}+\bigl(\beta^{(n)}_{2}\bigr)^2
\boldsymbol{\psi}^{(n)}(x_3)=0,
\]
where
\begin{equation}\label{beta}
\beta_{j}^{(n)}=
\begin{cases}
\big(\kappa^2_{j} - |\boldsymbol{\alpha}^{(n)}|^2\big)^{1/2},\quad
&|\boldsymbol{\alpha}^{(n)}|<\kappa_{j},\\[2pt]
{\rm i}\big(|\boldsymbol{\alpha}^{(n)}|^2 - \kappa_{j}^2\big)^{1/2},\quad
&|\boldsymbol{\alpha}^{(n)}|>\kappa_{j}.
\end{cases}
\end{equation}
Note that $\beta_{1}^{(0)}=\beta=\kappa_{1}\cos\theta_1$. We
assume that $\kappa_{j}\neq |\boldsymbol\alpha_n|$ for all $n\in\mathbb{Z}^2$ to
exclude all possible resonances. Noting \eqref{beta} and using the bounded
outgoing radiation condition, we obtain
\[
\phi^{(n)}(x_3)=\phi^{(n)}(h) e^{ {\rm i}\beta^{(n)}_{1}(x_3-h)},\quad
\boldsymbol\psi^{(n)}(x_3)=\boldsymbol\psi^{(n)}(h) e^{{\rm
i}\beta^{(n)}_{2}(x_3-h)}.
\]
Hence we deduce Rayleigh's expansions of $\phi$ and
$\boldsymbol{\psi}$ for $x_3>h$:
\begin{equation}\label{pfre}
  \phi(\boldsymbol x)=\sum_{n\in\mathbb{Z}^2}\phi^{(n)}(h) e^{{\rm
i}\bigl(\boldsymbol{\alpha}^{(n)}\cdot
\boldsymbol{r}+\beta^{(n)}_{1}(x_3-h)\bigr)},\quad
\boldsymbol{\psi}(\boldsymbol x)=\sum_{n\in\mathbb{Z}^2}\boldsymbol{\psi}^{
(n)}(h) e^{{\rm i}\bigl(\boldsymbol{\alpha}^{(n)}\cdot
\boldsymbol{r}+\beta^{(n)}_{2}(x_3-h)\bigr)}.
\end{equation}
Combining \eqref{pfre} and the Helmholtz decomposition
\eqref{hdv} yields
\begin{align}
 \boldsymbol{v}(\boldsymbol{x})={\rm i}\sum_{n\in\mathbb{Z}^2}
 &\begin{bmatrix}
  \alpha^{(n)}_{1}\\[5pt]
  \alpha^{(n)}_{2}\\[5pt]
  \beta^{(n)}_{1}
 \end{bmatrix}
\phi^{(n)}(h) e^{{\rm i}\bigl(\boldsymbol{\alpha}^{(n)}\cdot\boldsymbol{r}
+\beta^{(n)}_{1}(x_3-h)\bigr)} \nonumber\\\hspace{1cm}
&+\begin{bmatrix}
\alpha^{(n)}_{2}\psi_{3}^{(n)}(h)-\beta^{(n)}_{2}\psi_{2}^{(n)
} (h)\\[5pt]
\beta^{(n)}_{2}\psi_{1}^{(n)}(h)-\alpha^{(n)}_{1}\psi_{3}^{(n)} (h)\\[5pt]
  \alpha^{(n)}_{1}\psi_{2}^{(n)}(h)-\alpha^{(n)}_{2}\psi_{1}^{(n)}(h)
 \end{bmatrix}
 e^{{\rm i}\bigl(\boldsymbol{\alpha}^{(n)}\cdot\boldsymbol{r}
+\beta^{(n)}_{2}(x_3-h)\bigr)}.\label{vre}
\end{align}

On the other hand, as a quasi-biperiodic function, the diffracted field
$\boldsymbol{v}$ has the Fourier series expansion:
\begin{equation}\label{vfe}
 \boldsymbol{v}(\boldsymbol
r,h)=\sum_{n\in\mathbb{Z}^2}\boldsymbol{v}^{(n)}(h) e^{{\rm
i}\boldsymbol{\alpha}^{(n)}\cdot\boldsymbol{r}}.
\end{equation}
It follows from \eqref{vre}--\eqref{vfe} and $\nabla\cdot\boldsymbol{\psi}=0$
that we obtain a linear system of algebraic equations for $\phi^{(n)}(h)$ and
$\psi^{(n)}_k(h)$:
\begin{equation}\label{ls}
 {\rm i}
 \begin{bmatrix}
 \alpha^{(n)}_{1} &                        0 & -\beta^{(n)}_{2} &  \alpha^{(n)}_{2}\\[5pt]
 \alpha^{(n)}_{2} & \beta^{(n)}_{2} &         0 & -\alpha^{(n)}_{1}\\[5pt]
 \beta^{(n)}_{1} &    -\alpha^{(n)}_{2} &  \alpha^{(n)}_{1} &             0\\[5pt]
 0 &          \alpha^{(n)}_{1} &  \alpha^{(n)}_{2} & \beta^{(n)}_{2}
 \end{bmatrix}
 \begin{bmatrix}
  \phi^{(n)}(h)\\[5pt]
  \psi_{1}^{(n)}(h)\\[5pt]
  \psi_{2}^{(n)}(h)\\[5pt]
  \psi_{3}^{(n)}(h)
 \end{bmatrix}=
 \begin{bmatrix}
  v_{1}^{(n)}(h)\\[5pt]
  v_{2}^{(n)}(h)\\[5pt]
  v_{3}^{(n)}(h)\\[5pt]
  0
 \end{bmatrix}.
 \end{equation}
 Solving the above linear system directly via Cramer's rule gives
  \begin{align*}
\phi^{(n)}(h)&=-\frac{\rm i}{\chi^{(n)}}\big(\alpha^{(n)}_{1}  v_{1}^{(n)}(h)
+ \alpha^{(n)}_{2}  v_{2}^{(n)}(h)+ \beta^{(n)}_{2}  v_{3}^{(n)}(h)\big)\\
\psi_{1}^{(n)}(h) &=-\frac{\rm i}{\chi^{(n)}}\big(
\alpha^{(n)}_{1}\alpha^{(n)}_{2}(\beta^{(n)}_{1}-\beta^{(n)}_{2}) 
v_{1}^{(n)}(h)/\kappa^2_{2} \notag\\
 &+\big[(\alpha^{(n)}_{1})^2\beta^{(n)}_{2}+(\alpha^{(n)}_{2})^2\beta^{(n)}_{1}
+ \beta^{(n)}_{1}(\beta^{(n)}_{2})^2\big]  v_{2}^{(n)}(h)/\kappa^2_{2}
-\alpha^{(n)}_{2}  v_{3}^{(n)}(h)\big) \\
\psi_{2}^{(n)}(h) &=-\frac{\rm i}{\chi^{(n)}}\big(
-\big[(\alpha^{(n)}_{1})^2\beta^{(n)}_{1}+(\alpha^{(n)}_{2})^2\beta^{(n)}_{2} +
\beta^{(n)}_{1}(\beta^{(n)}_{2})^2\big]
  v_{1}^{(n)}(h) /\kappa^2_{2} \notag\\
  &-\alpha^{(n)}_{1}\alpha^{(n)}_{2}(\beta^{(n)}_{1}-\beta^{(n)}_{2})v_{2}^{(n)}
(h)/\kappa^2_{2} +\alpha^{(n)}_{1}  v_{3}^{(n)}(h)\big) \\
\psi_{3}^{(n)}(h) &=-\frac{\rm i}{\kappa^2_{2} }\big( \alpha^{(n)}_{2}
v_{1}^{(n)}(h) -\alpha^{(n)}_{1} v_{2}^{(n)}(h)\big),
 \end{align*}
 where
\begin{equation}\label{chi}
\chi^{(n)}=|\boldsymbol{\alpha}^{(n)}|^2+\beta^{(n)}_{1}\beta^{(n)}_{2}.
\end{equation}

Given a vector field $\boldsymbol{v}=(v_1, v_2,v_3)^\top$, we define a
differential operator $\mathscr{D}$ on $\Gamma_h$:
\begin{equation}\label{do}
\mathscr{D}\boldsymbol{v}=\mu\partial_{x_3}\boldsymbol{v}+(\lambda
+\mu)(\nabla\cdot\boldsymbol{v})\boldsymbol
e_3,
\end{equation}
where $\boldsymbol e_3=(0, 0, 1)^\top$. Substituting the Helmholtz decomposition
\eqref{hdv} into \eqref{do} and using \eqref{he}, we get
\[
 \mathscr{D}\boldsymbol{v}=\mu\partial_{x_3}
 (\nabla\phi+\nabla\times\boldsymbol\psi)
-(\lambda+\mu)\kappa_{1}^2\phi \boldsymbol e_3.
\]
It follows from \eqref{vre} that
\begin{align}\label{bv}
  (\mathscr{D}\boldsymbol{v})^{(n)}=-\mu
\begin{bmatrix}
\alpha^{(n)}_{1}\beta^{(n)}_{1} &           0 &    -(\beta^{(n)}_{2})^2 &  \alpha^{(n)}_{2}\beta^{(n)}_{2}\\[5pt]
\alpha^{(n)}_{2}\beta^{(n)}_{1} & (\beta^{(n)}_{2})^2 &         0 & -\alpha^{(n)}_{1}\beta^{(n)}_{2}\\[5pt]
(\beta^{(n)}_{2})^2 &    -\alpha^{(n)}_{2}\beta^{(n)}_{2} &  \alpha^{(n)}_{1}\beta^{(n)}_{2} &         0
\end{bmatrix}\begin{bmatrix}
  \phi^{(n)}(h)\\[5pt]
  \psi_{1}^{(n)}(h)\\[5pt]
  \psi_{2}^{(n)}(h)\\[5pt]
  \psi_{3}^{(n)}(h)
 \end{bmatrix}.
\end{align}

By \eqref{ls} and \eqref{bv}, we deduce the transparent
boundary conditions for the diffracted field:
\[
\mathscr{D}\boldsymbol{v}=\mathscr{T}\boldsymbol{v}
:=\sum_{n\in\mathbb{Z}^2}M^{(n)}
 \boldsymbol{v}^{(n)}(h)e^{{\rm i}\boldsymbol{\alpha}^{(n)}\cdot\boldsymbol{
r}}\quad\text{on} ~ \Gamma,
\]
where the matrix
\begin{align*}
 M^{(n)}&=\frac{{\rm i}\mu}{\chi^{(n)}}\\
& \begin{bmatrix}
 (\alpha^{(n)}_{1})^2(\beta^{(n)}_{1}-\beta^{(n)}_{2})+\beta^{(n)}_{2}\chi^{(n)}
&
\alpha^{(n)}_{1}\alpha^{(n)}_{2}(\beta^{(n)}_{1}-\beta^{(n)}_{2})&\alpha^{(n)}_
{ 1}\beta^{(n)}_{2}(\beta^{(n)}_{1}-\beta^{(n)}_{2})\\[ 5pt]
   \alpha^{(n)}_{1}\alpha^{(n)}_{2}(\beta^{(n)}_{1}-\beta^{(n)}_{2})& (\alpha^{(n)}_{2})^2(\beta^{(n)}_{1}-\beta^{(n)}_{2})+\beta^{(n)}_{2}\chi^{(n)} &
\alpha^{(n)}_{2}\beta^{(n)}_{2}(\beta^{(n)}_{1}-\beta^{(n)}_{2})\\[ 5pt]
-\alpha^{(n)}_{1}\beta^{(n)}_{2}(\beta^{(n)}_{1}-\beta^{(n)}_{2})&
-\alpha^{(n)}_{2}\beta^{(n)}_{2}(\beta^{(n)}_{1}-\beta^{(n)}_{2})& \kappa_{2}^2\beta^{(n)}_{2}
 \end{bmatrix}.
\end{align*}

Equivalently, we have the transparent boundary condition for the total
field $\boldsymbol{u}$ on $\Gamma_h$:
\[
 \mathscr{D}\boldsymbol{u}=\mathscr{T}\boldsymbol{u}+\boldsymbol{g},
\]
where
\[
\boldsymbol{g}=\mathscr{D}\boldsymbol{u}_{\rm
inc}-\mathscr{T}\boldsymbol{u}^{\rm inc}
=-\frac{2{\rm i}\omega^2\beta_{1}^{(0)}}{\kappa_{1}\chi^{(0)}}
(\alpha_1, \alpha_2, -\beta_{2}^{(0)})^\top
e^{{\rm i}(\alpha_1 x_1+\alpha_2 x_2-\beta_{1}^{(0)}b)}.
\]

The scattering problem can be reduced to the following boundary value problem:
\begin{equation}\label{bvp}
 \begin{cases}
  \mu\Delta\boldsymbol{u}+(\lambda+\mu)\nabla
\nabla\cdot\boldsymbol{u}+\omega^2\boldsymbol{u}=0\quad&\text{in}~ \Omega,\\
\mathscr{D}\boldsymbol{u}=\mathscr{T}\boldsymbol{u}+\boldsymbol{g}
\quad&\text{on} ~ \Gamma_h,\\
\boldsymbol{u}=0 \quad&\text{on} ~\Gamma_f.
 \end{cases}
\end{equation}
The weak formulation of \eqref{bvp} reads as follows: Find $\boldsymbol{u}\in
H^1_{\rm qp}(\Omega)^3$ such that
\begin{equation}\label{wp}
 a(\boldsymbol{u}, \boldsymbol{v})=\langle\boldsymbol{g},
\boldsymbol{v}\rangle_{\Gamma_h},\quad\forall\,\boldsymbol{v}\in H^1_{\rm
qp}(\Omega)^3,
\end{equation}
where the sesquilinear form $a: H^1_{\rm qp}(\Omega)^3\times H^1_{\rm
qp}(\Omega)^3\to\mathbb{C}$ is defined by
\begin{align}\label{sf}
 a(\boldsymbol{u}, \boldsymbol{v})=\mu\int_\Omega
\nabla\boldsymbol{u}:\nabla\bar{\boldsymbol
v}\,{\rm d}\boldsymbol{x}+(\lambda+\mu)\int_\Omega (\nabla\cdot\boldsymbol{u}
)(\nabla\cdot\bar{\boldsymbol v})\,{\rm d}\boldsymbol{x}\notag\\
-\omega^2\int_\Omega \boldsymbol{u}\cdot\bar{\boldsymbol v}\, {\rm
d}\boldsymbol{x}- \langle \mathscr{T}\boldsymbol{u},
\boldsymbol{v}\rangle_{\Gamma_h}.
\end{align}
Here $A:B={\rm tr}(A B^\top)$ is the Frobenius inner product of square matrices
$A$ and $B$.

In this paper, we assume that the variational problem \eqref{wp} admits a unique
solution. It follows from the general theory in \cite{ba-73} that there exists a
constant $\gamma_1>0$ such that the following inf-sup condition holds:
\begin{equation}\label{ifc}
\sup_{0\neq \boldsymbol{v}\in H^1_{\rm qp}(\Omega)^3}\frac{|a(\boldsymbol{u},
\boldsymbol{v})|}{\|\boldsymbol{v}\|_{H^1(\Omega)^3}}\geq \gamma_1
\|\boldsymbol{u}\|_{H^1(\Omega)^3},\quad\forall\,\boldsymbol{u}\in H^1_{\rm
qp}(\Omega)^3.
\end{equation}

\subsection{Energy distribution}

We study the energy distribution for the scattering problem. The result will be
used to verify the accuracy of our numerical method for examples where the
analytical solutions are not available. In general, the energy is
distributed away from the scattering surface through propagating wave modes.

Consider the Helmholtz decomposition for the total field:
\begin{equation}\label{hdu}
 \boldsymbol{u}=\nabla\phi^{\rm t} +\nabla\times\boldsymbol{\psi}^{\rm t},
   \quad \nabla\cdot\boldsymbol{\psi}^{\rm t}=0.
\end{equation}
Substituting \eqref{hdu} into \eqref{une}, we may verify that the scalar
potential function $\phi^{\rm t}$ and the vector potential function
$\boldsymbol{\psi}^{\rm t}$ satisfy
\[
\Delta\phi^{\rm t}+\kappa_1^2\phi^{\rm t}=0,\quad
\nabla\times(\nabla\times\boldsymbol{\psi}^{\rm
t})-\kappa_2^2\boldsymbol{\psi}^{\rm t}=0\quad\hbox{in} ~\Omega_f.
\]

We also introduce the Helmholtz decomposition for the incident field
\[
 \boldsymbol{u}^{\rm inc}=\nabla\phi^{\rm
inc} +\nabla\times\boldsymbol{\psi}^{\rm inc}, \quad
\nabla\cdot\boldsymbol{\psi}^{\rm inc}=0,
\]
which gives explicitly that
\[
\phi^{\rm inc}=-\frac{1}{\kappa_1^2}\nabla\cdot\boldsymbol{u}^{\rm
inc}=-\frac{{\rm i}}{\kappa_1}e^{{\rm
i}(\boldsymbol\alpha\cdot\boldsymbol r-\beta x_3)},\quad
\boldsymbol{\psi}^{\rm inc}=\frac{1}{\kappa_2^2}\nabla\times\boldsymbol{u}^{\rm
inc}=0.
\]
Hence we have
\[
\phi^{\rm t}=\phi^{\rm inc}+\phi,\quad
\boldsymbol{\psi}^{\rm t}=\boldsymbol{\psi}.
\]
Using the Rayleigh expansions \eqref{pfre}, we get
\begin{align}
\label{rect}\phi^{\rm t}(\boldsymbol{x})&=a_0 e^{{\rm
i}(\boldsymbol{\alpha}\cdot\boldsymbol{r}-\beta x_3)}
+\sum_{n\in\mathbb{Z}^2}a_1^{(n)} e^{{\rm
i}\bigl(\boldsymbol{\alpha}^{(n)}\cdot
\boldsymbol{r}+\beta^{(n)}_{1}x_3\bigr)}\\
\label{rest}\boldsymbol{\psi}^{\rm t}(\boldsymbol{x})&=\sum_{n\in\mathbb{Z}^2}
\boldsymbol b^{(n)} e^{{\rm i}\bigl(\boldsymbol{\alpha}^{(n)}\cdot
\boldsymbol{r}+\beta^{(n)}_{2}x_3\bigr)},
\end{align}
where
\[
a_0=-\frac{{\rm i}}{\kappa_1},\quad a_1^{(n)}=\phi^{(n)}(h) e^{-{\rm
i}\beta^{(n)}_{1}h},\quad \boldsymbol b^{(n)}=\boldsymbol{\psi}^{
(n)}(h) e^{-{\rm i}\beta^{(n)}_{2}h}.
\]

The grating efficiency is defined by
\begin{equation}\label{ge}
e_1^{(n)}=\frac{\beta^{(n)}_{1}|a_1^{(n)}|^2}{\beta|a_0|^2},\quad
e_2^{(n)}=\frac{\beta^{(n)}_{2}|\boldsymbol b^{(n)}|^2}{\beta|a_0|^2},
\end{equation}
where $e_1^{(n)}$ and $e_2^{(n)}$ are the efficiency of the $n$-th order
reflected modes for the compressional wave and the shear wave, respectively. In
practice, the grating efficiency \eqref{ge} can be computed from \eqref{ls}
once the scattering problem is solved and the diffracted field $\boldsymbol v$
is available on $\Gamma_h$.

\begin{theo}\label{coe}
The total energy is conserved, i.e.,
\[
\sum_{n\in U_1} e_1^{(n)}+\sum_{n\in U_2} e_2^{(n)}=1,
\]
where $U_j=\{n: |\boldsymbol\alpha^{(n)}|\leq\kappa_j\}$.
\end{theo}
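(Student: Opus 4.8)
The plan is to derive the identity from an energy (Green/Betti) identity for the Navier operator and then to evaluate the resulting boundary flux through $\Gamma_h$ by Parseval. First I would multiply the Navier equation $\mu\Delta\boldsymbol u+(\lambda+\mu)\nabla\nabla\cdot\boldsymbol u+\omega^2\boldsymbol u=0$ by $\bar{\boldsymbol u}$ and integrate over $\Omega$. Integration by parts turns the boundary terms into exactly the co-normal operator $\mathscr{D}$ of \eqref{do} on $\partial\Omega$: on $\Gamma_f$ they vanish because $\boldsymbol u=0$; on the two pairs of lateral faces they cancel because $\boldsymbol u$ is quasi-biperiodic (opposite faces carry conjugate phase factors and opposite outward normals, so each integrand is periodic and the two face integrals cancel), leaving only the contribution on $\Gamma_h$. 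This yields
\[
\langle\mathscr{D}\boldsymbol u,\boldsymbol u\rangle_{\Gamma_h}
=\mu\int_\Omega|\nabla\boldsymbol u|^2\,{\rm d}\boldsymbol x
+(\lambda+\mu)\int_\Omega|\nabla\cdot\boldsymbol u|^2\,{\rm d}\boldsymbol x
-\omega^2\int_\Omega|\boldsymbol u|^2\,{\rm d}\boldsymbol x,
\]
whose right-hand side is real; hence $\mathrm{Im}\,\langle\mathscr{D}\boldsymbol u,\boldsymbol u\rangle_{\Gamma_h}=0$, which is the conservation law in disguise.

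Next I would evaluate the flux $\langle\mathscr{D}\boldsymbol u,\boldsymbol u\rangle_{\Gamma_h}$ explicitly from the Rayleigh expansions. Using the Helmholtz decomposition \eqref{hdu} together with $\nabla\cdot\boldsymbol u=\Delta\phi^{\rm t}=-\kappa_1^2\phi^{\rm t}$, the operator reads $\mathscr{D}\boldsymbol u=\mu\partial_{x_3}(\nabla\phi^{\rm t}+\nabla\times\boldsymbol\psi^{\rm t})-(\lambda+\mu)\kappa_1^2\phi^{\rm t}\boldsymbol e_3$. Substituting \eqref{rect}--\eqref{rest}, both $\boldsymbol u$ and $\mathscr{D}\boldsymbol u$ become quasi-biperiodic Fourier series on $\Gamma_h$ in which each mode $n$ is a superposition of a compressional plane wave with wavevector $(\boldsymbol\alpha^{(n)},\beta_1^{(n)})$, a shear plane wave with wavevector $(\boldsymbol\alpha^{(n)},\beta_2^{(n)})$, and, for $n=0$, the downgoing incident compressional wave with wavevector $(\boldsymbol\alpha,-\beta)$. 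By the orthogonality of $\{e^{{\rm i}\boldsymbol\alpha^{(n)}\cdot\boldsymbol r}\}$ over $\Gamma_h$ one has $\langle\mathscr{D}\boldsymbol u,\boldsymbol u\rangle_{\Gamma_h}=\Lambda_1\Lambda_2\sum_n(\mathscr{D}\boldsymbol u)^{(n)}\cdot\overline{\boldsymbol u^{(n)}}$, so it remains to take the imaginary part of each mode's self- and cross-interactions.

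For the self-interactions I would invoke the dispersion relations. For a propagating compressional mode, $|\boldsymbol\alpha^{(n)}|^2+(\beta_1^{(n)})^2=\kappa_1^2$ and $(\lambda+2\mu)\kappa_1^2=\omega^2$ collapse its contribution to the purely imaginary ${\rm i}\,\omega^2\beta_1^{(n)}|a_1^{(n)}|^2$; for a propagating shear mode, the constraint $\nabla\cdot\boldsymbol\psi=0$ gives $(\boldsymbol\alpha^{(n)},\beta_2^{(n)})\cdot\boldsymbol b^{(n)}=0$, whence $|(\boldsymbol\alpha^{(n)},\beta_2^{(n)})\times\boldsymbol b^{(n)}|^2=\kappa_2^2|\boldsymbol b^{(n)}|^2$ and, with $\mu\kappa_2^2=\omega^2$, the contribution is ${\rm i}\,\omega^2\beta_2^{(n)}|\boldsymbol b^{(n)}|^2$; the incident wave similarly gives $-{\rm i}\,\omega^2\beta|a_0|^2$. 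When $\beta_j^{(n)}$ is purely imaginary (the evanescent modes $n\notin U_j$) the same computation makes the self-term real, so it drops out of the imaginary part. The cross terms — compressional against shear within a mode, and incident against reflected at $n=0$ — occur in conjugate pairs $\boldsymbol t_a\cdot\overline{\boldsymbol u_b}+\boldsymbol t_b\cdot\overline{\boldsymbol u_a}=2\,\mathrm{Re}(\cdots)$ and hence are real. Collecting everything, $0=\mathrm{Im}\,\langle\mathscr{D}\boldsymbol u,\boldsymbol u\rangle_{\Gamma_h}=\Lambda_1\Lambda_2\,\omega^2\bigl(\sum_{n\in U_1}\beta_1^{(n)}|a_1^{(n)}|^2+\sum_{n\in U_2}\beta_2^{(n)}|\boldsymbol b^{(n)}|^2-\beta|a_0|^2\bigr)$, and dividing by $\Lambda_1\Lambda_2\,\omega^2\beta|a_0|^2$ and recalling \eqref{ge} gives the asserted identity.

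The main obstacle is the bookkeeping in the last step: one must verify that every cross term, including those coupling a propagating and an evanescent component that share a horizontal mode, contributes nothing to the imaginary part — that is, that the boundary flux form is Hermitian on each Fourier mode — and that the $\kappa_1$- and $\kappa_2$-dependent prefactors arising from the two wave types both normalize to the common factor $\omega^2\beta_j^{(n)}$. This last collapse is precisely what lets the two families of grating efficiencies $e_1^{(n)}$ and $e_2^{(n)}$ in \eqref{ge} enter the energy balance on an equal footing.
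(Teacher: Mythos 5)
Your proposal is correct in outline, but it takes a genuinely different route from the paper. The paper never forms the energy identity for the Navier operator itself; instead it works entirely at the level of the potentials: it derives the coupled boundary conditions for $(\phi^{\rm t},\boldsymbol\psi^{\rm t})$ on $\Gamma_f$ from $\boldsymbol u=0$, applies Green's identities separately to the Helmholtz equation for $\phi^{\rm t}$ and the Maxwell equation for $\boldsymbol\psi^{\rm t}$ as in \eqref{cp}, and shows that the two families of $\Gamma_f$ boundary terms cancel \emph{against each other}, leaving the decoupled flux identity \eqref{ip} on $\Gamma_h$ in which the compressional and shear contributions never interact; the final Fourier computation is then a sum of independent squares. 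Your route — Betti's identity for the Navier operator with the co-normal operator $\mathscr{D}$ of \eqref{do}, consistent with the sesquilinear form \eqref{sf} — makes the $\Gamma_f$ step trivial (the displacement vanishes there), which is a real simplification since the paper's cancellation on the Lipschitz surface $\Gamma_f$ requires some care. The price is paid on $\Gamma_h$: your flux $\langle\mathscr{D}\boldsymbol u,\boldsymbol u\rangle_{\Gamma_h}$ couples the compressional and shear parts within each Fourier mode, and the realness of the cross terms is \emph{not} a formal ``conjugate pairs'' statement as your phrasing suggests — the two terms $\mathscr{D}\boldsymbol u_P\cdot\bar{\boldsymbol u}_S$ and $\mathscr{D}\boldsymbol u_S\cdot\bar{\boldsymbol u}_P$ are not complex conjugates of one another. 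Writing the pair as ${\rm i}(Az+B\bar z)$ with $z=a^{(n)}\bar c_3^{(n)}$, one finds that its realness is equivalent to $A=-\bar B$, which reduces (in every case: propagating--propagating, propagating--evanescent, evanescent--evanescent, and incident--reflected) to the single identity $\mu\bigl((\beta_1^{(n)})^2-(\beta_2^{(n)})^2\bigr)+(\lambda+\mu)\kappa_1^2=\mu(\kappa_1^2-\kappa_2^2)+(\lambda+\mu)\kappa_1^2=(\lambda+2\mu)\kappa_1^2-\mu\kappa_2^2=0$, i.e.\ precisely the two dispersion relations. You correctly flag this verification as the main obstacle, and it does go through, so the plan is sound; but to make it a proof you must carry out that cancellation explicitly rather than appeal to Hermitian symmetry, which is exactly the step the paper's potential-based decoupling is designed to avoid.
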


\begin{proof}
It follows from the boundary condition \eqref{bc} and the Helmholtz
decomposition \eqref{hdu} that
\[
 \nabla\phi^{\rm t}+\nabla\times\boldsymbol\psi^{\rm t}=0\quad\text{on} ~
\Gamma_f,
\]
which gives
\[
 \boldsymbol\nu\cdot\nabla\phi^{\rm
t}+\boldsymbol\nu\cdot(\nabla\times\boldsymbol\psi^{\rm t})=0,\quad
\boldsymbol\nu\times\nabla\phi^{\rm
t}+\boldsymbol\nu\times(\nabla\times\boldsymbol\psi^{\rm t})=0.
\]
Here $\boldsymbol\nu$ is the unit normal vector on $\Gamma_f$.

Consider the following coupled problem:
\begin{equation}\label{cp}
 \begin{cases}
  \Delta\phi^{\rm t}+\kappa_1^2\phi^{\rm t}=0,\quad
\nabla\times(\nabla\times\boldsymbol\psi^{\rm
t})-\kappa_2^2\boldsymbol\psi^{\rm t}=0\quad&\text{in} ~ \Omega,\\
\boldsymbol\nu\cdot\nabla\phi^{\rm
t}+\boldsymbol\nu\cdot(\nabla\times\boldsymbol\psi^{\rm t})=0,\quad
\boldsymbol\nu\times\nabla\phi^{\rm
t}+\boldsymbol\nu\times(\nabla\times\boldsymbol\psi^{\rm t})=0 \quad&\text{on} ~
\Gamma_f.
 \end{cases}
\end{equation}
It is clear to note that $(\phi^{\rm t}, \boldsymbol\psi^{\rm t})$ also
satisfies the problem \eqref{cp} since the wavenumbers $\kappa_j$ are real.
Using Green's theorem and quasi-periodicity of the solution, we get
\begin{align}
 0=&\int_\Omega (\bar\phi^{\rm t}\Delta\phi^{\rm t}-\phi^{\rm
t}\Delta\bar\phi^{\rm t}) {\rm d}\boldsymbol x-\int_\Omega
(\bar{\boldsymbol\psi}^{\rm t}\cdot\nabla\times(\nabla\times\boldsymbol\psi^{\rm
t})-\boldsymbol\psi^{\rm t}\cdot\nabla\times(\nabla\times\bar{\boldsymbol\psi}^{
\rm t})){\rm d}\boldsymbol x\notag\\
=&\int_{\Gamma_f} (\bar\phi^{\rm t}\partial_{\boldsymbol\nu}\phi^{\rm
t}-\phi^{\rm t}\partial_{\boldsymbol\nu}\bar\phi^{\rm t}){\rm d}\gamma
-\int_{\Gamma_f}
(\bar{\boldsymbol\psi}^{\rm
t}\cdot (\boldsymbol\nu\times\nabla\times\boldsymbol\psi^{\rm
t})-\boldsymbol\psi^{\rm t}\cdot(\boldsymbol\nu\times\nabla\times\bar{
\boldsymbol\psi}^{\rm t})){\rm d}\gamma\notag\\
\label{gt}&+\int_{\Gamma_h} (\bar\phi^{\rm t}\partial_{x_3}\phi^{\rm
t}-\phi^{\rm t}\partial_{x_3}\bar\phi^{\rm t}){\rm d}\boldsymbol r
-\int_{\Gamma_h} (\bar{\boldsymbol\psi}^{\rm t}\cdot (\boldsymbol e_3
\times\nabla\times\boldsymbol\psi^{\rm t})-\boldsymbol\psi^{\rm
t}\cdot(\boldsymbol e_3\times\nabla\times\bar{\boldsymbol\psi}^{\rm t})){\rm
d}\boldsymbol r.
\end{align}
It follows from the integration by parts and the boundary conditions in
\eqref{cp} that
\begin{align*}
 \int_{\Gamma_f}\partial_{\boldsymbol\nu}\phi^{\rm t}\bar{\phi}^{\rm t}{\rm
d}\gamma=-\int_{\Gamma_f}\boldsymbol\nu\cdot(\nabla\times\boldsymbol\psi^{\rm
t})\bar{\phi}^{\rm t}{\rm d}\gamma=\int_{\Gamma_f}\boldsymbol\psi^{\rm t}\cdot(
\boldsymbol\nu\times\nabla\bar{\phi}^{\rm t}){\rm
d}\gamma=-\int_{\Gamma_f}\boldsymbol\psi^{\rm t}
\cdot(\boldsymbol\nu\times (\nabla\times\boldsymbol{\bar\psi}^{\rm t})){\rm
d}\gamma,
\end{align*}
which gives after taking the imaginary part of \eqref{gt} that
\begin{equation}\label{ip}
 {\rm Im}\int_{\Gamma_h} (\bar\phi^{\rm t}\partial_{x_3}\phi^{\rm
t}-\bar{\boldsymbol\psi}^{\rm t}\cdot (\boldsymbol
e_3\times\nabla\times\boldsymbol\psi^{\rm t})){\rm d}\boldsymbol r=0.
\end{equation}

Let $\Delta_j^{(n)}=|\kappa_j^2-|\boldsymbol\alpha^{(n)}|^2|^{1/2}$.  It is
clear to note that $\beta_j^{(n)}=\Delta_j^{(n)}$ for $n\in U_j$ and
$\beta_j^{(n)}={\rm i}\Delta_j^{(n)}$ for $n\notin U_j$. It follows from
\eqref{rect} and \eqref{rest} that we have
\begin{align*}
\phi^{\rm t}(\boldsymbol{r}, h)&=a_0 e^{{\rm
i}(\boldsymbol{\alpha}\cdot\boldsymbol{r}-\beta h)}
+\sum_{n\in U_1}a_1^{(n)} e^{\big({\rm
i}\boldsymbol{\alpha}^{(n)}\cdot \boldsymbol{r}+{\rm
i}\Delta^{(n)}_{1}h\big)}+\sum_{n\notin U_1}a_1^{ (n) } e^{\big({\rm
i}\boldsymbol{\alpha}^{(n)}\cdot
\boldsymbol{r}-\Delta^{(n)}_{1} h\big)},\\
\boldsymbol{\psi}^{\rm t}(\boldsymbol{r}, h)&=\sum_{n\in U_2}
\boldsymbol b^{(n)} e^{\bigl({\rm i}\boldsymbol{\alpha}^{(n)}\cdot
\boldsymbol{r}+{\rm i}\Delta^{(n)}_{2} h\bigr)}+\sum_{n\notin U_2}
\boldsymbol b^{(n)} e^{\bigl({\rm i}\boldsymbol{\alpha}^{(n)}\cdot
\boldsymbol{r}-\Delta^{(n)}_{2} h\bigr)},
\end{align*}
and
\begin{align*}
 \partial_{x_3}\phi^{\rm t}(\boldsymbol{r}, h)&=-{\rm i}\beta a_0 e^{{\rm
i}(\boldsymbol{\alpha}\cdot\boldsymbol{r}-\beta h)}
+\sum_{n\in U_1}{\rm i}\Delta_1^{(n)}a_1^{(n)} e^{\big({\rm
i}\boldsymbol{\alpha}^{(n)}\cdot \boldsymbol{r}+{\rm
i}\Delta^{(n)}_{1}h\big)}-\sum_{n\notin U_1}\Delta_1^{(n)}a_1^{ (n) }
e^{\big({\rm i}\boldsymbol{\alpha}^{(n)}\cdot
\boldsymbol{r}-\Delta^{(n)}_{1} h\big)},\\
\boldsymbol e_3\times\nabla\times\boldsymbol\psi^{\rm t}(\boldsymbol r,
h)&=\sum_{n\in U_2}\begin{bmatrix}
                    {\rm i}\alpha_1^{(n)}b_3^{(n)}-{\rm
i}\Delta_2^{(n)}b^{(n)}_1\\
{\rm i}\alpha_2^{(n)}b_3^{(n)}-{\rm
i}\Delta_2^{(n)}b^{(n)}_2\\
0
                   \end{bmatrix} e^{\bigl({\rm i}\boldsymbol{\alpha}^{(n)}\cdot
\boldsymbol{r}+{\rm i}\Delta^{(n)}_{2} h\bigr)}\\
&\hspace{2cm}+\sum_{n\notin U_2}\begin{bmatrix}
                    {\rm i}\alpha_1^{(n)}b_3^{(n)}+\Delta_2^{(n)}b^{(n)}_1\\
{\rm i}\alpha_2^{(n)}b_3^{(n)}+\Delta_2^{(n)}b^{(n)}_2\\
0
\end{bmatrix} e^{\bigl({\rm i}\boldsymbol{\alpha}^{(n)}\cdot
\boldsymbol{r}-\Delta^{(n)}_{2} h\bigr)},
\end{align*}
where $\boldsymbol b^{(n)}=(b^{(n)}_1, b^{(n)}_2, b^{(n)}_3)^\top$. 
Substituting the above four functions into \eqref{ip}, using the
orthogonality of the Fourier series and the divergence free condition, we
obtain
\[
 \beta|a_0|^2=\sum_{n\in U_1}\Delta_1^{(n)}|a_1^{(n)}|^2+\sum_{n\in
U_2}\Delta_2^{(n)}|\boldsymbol b^{(n)}|^2,
\]
which completes the proof.
\end{proof}

\section{The PML problem}

In this section, we introduce the PML formulation for the scattering problem and
establish the well-posedness of the PML problem. An error estimate will be shown
for the solutions between the original scattering problem and the
PML problem.

\subsection{PML formulation}

Now we turn to the introduction of an absorbing PML layer. The domain $\Omega$
is covered by a PML layer of thickness $\delta$ in $\Omega_h$. Let
$\rho(\tau)=\rho_1(\tau)+{\rm i}\rho_2(\tau)$ be the PML function which is
continuous and satisfies
\[
 \rho_1=1, \quad \rho_2=0 \quad\text{for}~ \tau<h\quad\text{and}\quad
\rho_1\geq 1,\quad \rho_2>0\quad\text{otherwise}.
\]
We introduce the PML by complex coordinate stretching:
\begin{equation}\label{cs}
 \hat{x}_3=\int_0^{x_3} \rho(\tau) {\rm d}\tau.
\end{equation}

Let $\hat{\boldsymbol x}=(\boldsymbol r,\hat{x}_3)$. Introduce the new field
\begin{equation}\label{nf}
 \hat{\boldsymbol u}(\boldsymbol{x})=\begin{cases}
\boldsymbol{u}^{\rm inc}(\boldsymbol{x})+(\boldsymbol{u}(\hat{\boldsymbol
x})-\boldsymbol{u}^{\rm inc}(\hat{\boldsymbol x})),\quad&
\boldsymbol{x}\in\Omega_h,\\
\boldsymbol{u}(\hat{\boldsymbol x}) ,\quad
&\boldsymbol{x}\in\Omega.
 \end{cases}
\end{equation}
It is clear to note that $\hat{\boldsymbol u}({\boldsymbol
x})=\boldsymbol{u}(\boldsymbol{x})$ in $\Omega$ since $\hat{\boldsymbol
x}=\boldsymbol{x}$ in $\Omega$. It can be verified from \eqref{une} and
\eqref{cs} that $\hat{\boldsymbol u}$ satisfies
\[
 \mathscr{L}(\hat{\boldsymbol u}-\boldsymbol{u}^{\rm inc})=0\quad\text{in}
~\Omega_f.
\]
Here the PML differential operator
\[
  \mathscr{L}\boldsymbol{u}=(w_1, w_2, w_3)^\top,
\]
where
\begin{align*}
 w_1=& (\lambda+2\mu)\partial_{x_1x_1}^2 u_1+\mu(\partial_{x_2x_2}^2
u_1+\rho^{-1}(x_3)\partial_{x_3}
(\rho^{-1}(x_3)\partial_{x_3} u_1))\\
&+(\lambda+\mu)(\partial^2_{x_1x_2} u_2+\rho^{-1}(x_3)\partial^2_{x_1x_3}
u_3)+\omega^2u_1,\\
w_2= &(\lambda+2\mu)\partial_{x_2x_2}^2 u_2+\mu(\partial_{x_1x_1}^2
u_2+\rho^{-1}(x_3)\partial_{x_3}(\rho^{-1}(x_3)\partial_{x_3}
u_2))\\
&+(\lambda+\mu)(\partial^2_{x_1x_2} u_1 +\rho^{-1}(x_3)\partial^2_{x_2x_3}
u_3)+\omega^2u_2\\
w_3=&(\lambda+2\mu)\rho^{-1}(x_3)\partial_{x_3}(\rho^{-1}(x_3)\partial_{x_3}
u_3)+\mu(\partial_{x_1x_1}^2 u_3+\partial_{x_2x_2}^2
u_3))\\
&+(\lambda+\mu)\rho^{-1}(x_3)(\partial^2_{x_1x_3} u_1 +\partial^2_{x_2x_3}
u_2)+\omega^2u_3.
\end{align*}

Define the PML regions
\begin{align*}
 \Omega^{\rm PML}&=\{\boldsymbol{x}\in\mathbb{R}^3: 0<x_1<\Lambda_1,\,
0<x_2<\Lambda_2,\, h<x_3<h+\delta\}.
\end{align*}
It is clear to note from \eqref{nf} that the outgoing wave
$\hat{\boldsymbol u}(\boldsymbol{x})-\boldsymbol{u}^{\rm inc}(\boldsymbol{x})$
in $\Omega_h$ decay exponentially as $x_3\to +\infty$. Therefore, the
homogeneous Dirichlet boundary condition can be
imposed on
\[
 \Gamma^{\rm PML}=\{\boldsymbol{x}\in\mathbb{R}^3:
0<x_1<\Lambda_1,\,0<x_2<\Lambda_2,\,x_3=h+\delta\}
\]
to truncate the PML problem. Define the computational domain for the PML problem
$D=\Omega\cup\Omega^{\rm PML}$. We arrive at
the following truncated PML
problem: Find a quasi-periodic solution $\hat{\boldsymbol u}$ such that
\begin{equation}\label{pmlp}
 \begin{cases}
 \mathscr{L}\hat{\boldsymbol u}=\boldsymbol{g}&\quad\text{in} ~ D,\\
  \hat{\boldsymbol u}=\boldsymbol{u}^{\rm inc}&\quad\text{on} ~
\Gamma^{\rm PML},\\
\hat{\boldsymbol u}=0&\quad\text{on} ~ \Gamma_f,
 \end{cases}
\end{equation}
where
\[
 \boldsymbol{g}=\begin{cases}
                 \mathscr{L}\boldsymbol{u}^{\rm inc}&\quad\text{in} ~
\Omega^{\rm PML},\\
                 0&\quad\text{in} ~ \Omega.
                \end{cases}
\]

Define $H^1_{0,\rm qp}(D)=\{u\in H^1_{\rm qp}(D):
u=0~\text{on}~\Gamma^{\rm PML}\cup\Gamma_f\}$.
The weak formulation of the PML problem \eqref{pmlp} reads as
follows: Find $\hat{\boldsymbol u}\in H^1_{\rm qp}(D)^3$ such that
$\hat{\boldsymbol u}=\boldsymbol{u}^{\rm inc}$ on $\Gamma^{\rm PML}$ and
\begin{equation}\label{twp}
 b_D(\hat{\boldsymbol u}, \boldsymbol{v})=-\int_D
\boldsymbol{g}\cdot\bar{\boldsymbol v}{\rm d}\boldsymbol{x},\quad\forall\,
\boldsymbol{v}\in H^1_{0, \rm qp}(D)^3.
\end{equation}
Here for any domain $G\subset\mathbb{R}^3$, the sesquilinear form $b_G: H^1_{\rm qp}(G)^3\times
H^1_{\rm qp}(G)^3\to\mathbb{C}$ is defined by
\begin{align*}
 b_G(\boldsymbol{u}, \boldsymbol{v})=\int_G & (\lambda+2\mu)
(\partial_{x_1} u_1\partial_{x_1}\bar{v}_1+\partial_{x_2} u_2\partial_{x_2}\bar{v}_2+(\rho^{-1})^2\partial_{x_3}
u_3\partial_{x_3}\bar{v}_3)\\
&+\mu(\partial_{x_2}u_1\partial_{x_2}\bar{v}_1
          +\partial_{x_1}u_2\partial_{x_1}\bar{v}_2
          +\partial_{x_1}u_3\partial_{x_1}\bar{v}_3+\partial_{x_2} u_3\partial_{x_2}\bar{v}_3)\\
&+\mu(\rho^{-1})^2(\partial_{x_3} u_1\partial_{x_3}\bar{v}_1
+\partial_{x_3} u_2\partial_{x_3}\bar{v}_2)+(\lambda+\mu)(\partial_{x_2}
u_2\partial_{x_1}\bar{v}_1
               +\partial_{x_1}u_1\partial_{x_2}\bar{v}_2)\\
&+(\lambda+\mu)\rho^{-1}(\partial_{x_3} u_3\partial_{x_1}\bar{v}_1
               +\partial_{x_3}u_3\partial_{x_2}\bar{v}_2
               +\partial_{x_1}u_1\partial_{x_3}\bar{v}_3
               +\partial_{x_2}u_2\partial_{x_3}\bar{v}_3)\\
&-\omega^2(u_1\bar{v}_1+u_2\bar{v}_2+u_3\bar{v}_3)\,{\rm
d}\boldsymbol{x}.
\end{align*}

We will reformulate the variational problem \eqref{twp} in the domain
$D$ into an equivalent variational formulation in the domain $\Omega$, and
discuss the existence and uniqueness of the weak solution to the equivalent weak
formulation. To do so, we need to introduce the transparent boundary condition
for the truncated PML problem.

\subsection{Transparent boundary condition of the PML problem}

Let $\hat{\boldsymbol v}(\boldsymbol{x})=\boldsymbol{v}(\hat{\boldsymbol
x})=\boldsymbol{u}(\hat{\boldsymbol x})-\boldsymbol{u}_{\rm
inc}(\hat{\boldsymbol x})$ in $\Omega^{\rm PML}$. It is clear to note that $\hat{\boldsymbol v}$
satisfies the Navier equation in the complex coordinate
\begin{equation}\label{cvne}
\mu\Delta_{\hat{\boldsymbol x}}\hat{\boldsymbol v}+(\lambda
+\mu)\nabla_{\hat{\boldsymbol x}}\nabla_{\hat{\boldsymbol
x}}\cdot\hat{\boldsymbol v} +\omega^2\hat{\boldsymbol v}=0\quad\text{in} ~
\Omega^{\rm PML},
\end{equation}
where $\nabla_{\hat{\boldsymbol x}}=(\partial_{x_1},\partial_{x_2},
\partial_{{\hat x}_3})^\top$ with $\partial_{{\hat
x}_3}=\rho^{-1}(x_3)\partial_{x_3}$.

We introduce the Helmholtz decomposition for the solution of \eqref{cvne}:
\begin{equation}\label{chdv}
 \hat{\boldsymbol v}=\nabla_{\hat{\boldsymbol x}}\hat{\phi}
 +\nabla_{\hat{\boldsymbol x}}\times\hat{\boldsymbol{\psi}},\quad
 \nabla_{\hat{\boldsymbol x}}\cdot\hat{\boldsymbol{\psi}}=0,
\end{equation}
Plugging \eqref{chdv} into \eqref{cvne} gives
\begin{equation}\label{che}
 \Delta_{\hat{\boldsymbol{x}}}\hat{\phi}
+\kappa_{1}^2\hat{\phi}=0,\quad
\Delta_{\hat{\boldsymbol{x}}}\hat{\boldsymbol{\psi}}
+\kappa_{2}^2\hat{\boldsymbol{\psi}}=0.
 \end{equation}
Due to the quasi-periodicity of the solution, we have the Fourier series
expansions
\[
 \hat{\phi}(\boldsymbol x)=\sum_{n\in\mathbb{Z}^2}\hat{\phi}^{(n)}(x_3)
 e^{{\rm i}\boldsymbol{\alpha}^{(n)}\cdot\boldsymbol{r}},
 \]
 and
 \[
 \hat{\boldsymbol{\psi}}(\boldsymbol x)=\sum_{n\in\mathbb{Z}^2}
 \big(\hat{\psi}^{(n)}_{1}(x_3),\hat{\psi}^{(n)}_{2}(x_3),\hat{\psi}^{(n)}_{3} (x_3)\big)^\top
 e^{{\rm i}\boldsymbol{\alpha}^{(n)}\cdot\boldsymbol{r}}.
 \]
Substituting the above Fourier series expansions into \eqref{che} yields
\[
 \rho^{-1}\frac{\rm d}{{\rm d}x_3}\Bigl(\rho^{-1}\frac{{\rm
d}}{{\rm d}x_3}\hat{\phi}^{(n)}(x_3)\Bigr)+(\beta_{1}^{(n)})^2
\hat{\phi}^{(n)}(x_3)=0
\]
and
\[
 \rho^{-1}\frac{\rm d}{{\rm d}x_3}\Bigl(\rho^{-1}\frac{{\rm
d}}{{\rm d}x_3}\hat{\psi}_{k}^{(n)}(x_3)\Bigr)+(\beta_{2}^{(n)})^2
\hat{\psi}_{k}^{(n)}(x_3)=0,\quad k=1,2,3.
\]
The general solutions of the above equations are
\begin{equation}\label{slu}
\begin{cases}
 \hat{\phi}^{(n)}(x_3)=A^{(n)} e^{{\rm
i}\beta_{1}^{(n)}\int_{h}^{x_3}\rho(\tau){\rm d}\tau} + B^{(n)}e^{-{\rm
i}\beta_{1}^{(n)}\int_{h}^{x_3}\rho(\tau){\rm d}\tau},\\
 \hat{\psi}_{k}^{(n)}(x_3)=C_{k}^{(n)} e^{{\rm
i}\beta_{2}^{(n)}\int_{h}^{x_3}\rho(\tau){\rm d}\tau} +
D_{k}^{(n)}e^{-{\rm i}\beta_{2}^{(n)}\int_{h}^{x_3} \rho(\tau){\rm
d}\tau}.
\end{cases}
\end{equation}

Define
\begin{equation}\label{zeta}
\zeta=\int_{h}^{h+\delta}\rho(\tau){\rm d}\tau,\quad
\zeta(x_3)=\int_{h}^{x_3}\rho(\tau){\rm d}\tau.
\end{equation}
The coefficients $A^{(n)}$, $B^{(n)}$, $C_{k}^{(n)}, D_{k}^{(n)}$
can be uniquely determined by solving the following linear system:
\begin{equation}\label{le}
  \mathbf{A}^{(n)}\mathbf{X}^{(n)}=\mathbf{V}^{(n)},
\end{equation}
where
\begin{align*}
&\mathbf{X}^{(n)}=\big(A^{(n)}, B^{(n)}, C_{1}^{(n)}, D_{1}^{(n)},
C_{2}^{(n)}, D_{2}^{(n)}, C_{3}^{(n)}, D_{3}^{(n)}\big)^\top,\\
&\mathbf{V}^{(n)}=-{\rm i}\big(v_1^{(n)}(h),
v_2^{(n)}(h), v^{(n)}_{3}(h), 0, 0, 0, 0, 0\big)^\top,
\end{align*}
and
\begin{equation*}
  \mathbf{A}^{(n)}=
  \begin{bmatrix}
    A_{11}^{(n)}&A_{12}^{(n)}\\[5pt]
    A_{21}^{(n)}&A_{22}^{(n)}
  \end{bmatrix}.
\end{equation*}
Here the block matrices are
\begin{equation*}
  A_{1 1}^{(n)}=
  \begin{bmatrix}
    \alpha^{(n)}_1 & \alpha^{(n)}_1&0&0\\[5pt]
    \alpha^{(n)}_2 & \alpha^{(n)}_2&\beta_{2}^{(n)}&-\beta_{2}^{(n)}\\[5pt]
    \beta_{1}^{(n)} & -\beta_{1}^{(n)}&-\alpha^{(n)}_2&-\alpha^{(n)}_2\\[5pt]
    \alpha^{(n)}_1 e^{{\rm i}\beta_{1}^{(n)}\zeta} &
 \alpha^{(n)}_1 e^{-{\rm i}\beta_{1}^{(n)}\zeta}& 0&0
  \end{bmatrix},
\end{equation*}
\begin{equation*}
  A_{1 2}^{(n)}=
  \begin{bmatrix}
    -\beta_{2}^{(n)}& \beta_{2}^{(n)}& \alpha^{(n)}_{2} &
\alpha^{(n)}_{2}\\[5pt]
    0& 0& -\alpha_{1}^{(n)} & -\alpha_{1}^{(n)}\\[5pt]
    \alpha_{1}^{(n)}& \alpha_{1}^{(n)}& 0 & 0\\[5pt]
    -\beta_{2}^{(n)}e^{{\rm i}\beta_{2}^{(n)}\zeta} &
    \beta_{2}^{(n)}e^{-{\rm i}\beta_{2}^{(n)}\zeta}&
    \alpha_{2}^{(n)}e^{{\rm i}\beta_{2}^{(n)}\zeta}&
    \alpha_{2}^{(n)}e^{-{\rm i}\beta_{2}^{(n)}\zeta}
  \end{bmatrix},
\end{equation*}
\begin{equation*}
  A_{2 1}^{(n)}=
  \begin{bmatrix}
    \alpha_{2}^{(n)}e^{{\rm i}\beta_{1}^{(n)}\zeta}&
    \alpha_{2}^{(n)}e^{-{\rm i}\beta_{1}^{(n)}\zeta}&
    \beta_{2}^{(n)}e^{{\rm i}\beta_{2}^{(n)}\zeta} &
    -\beta_{2}^{(n)}e^{-{\rm i}\beta_{2}^{(n)}\zeta}\\[5pt]
    \beta_{1}^{(n)}e^{{\rm i}\beta_{1}^{(n)}\zeta}&
    -\beta_{1}^{(n)}e^{-{\rm i}\beta_{1}^{(n)}\zeta}&
    -\alpha_{2}e^{{\rm i}\beta_{2}^{(n)}\zeta} &
    -\alpha_{2}e^{-{\rm i}\beta_{2}^{(n)}\zeta}\\[5pt]
    0& 0& \alpha_{1}^{(n)} & \alpha_{1}^{(n)}\\[5pt]
    0& 0& \alpha_{1}^{(n)}e^{{\rm i}\beta_{2}^{(n)}\zeta} &
    \alpha_{1}^{(n)}e^{-{\rm i}\beta_{2}^{(n)}\zeta}\\[5pt]
  \end{bmatrix},
\end{equation*}
\begin{equation*}
  A_{2 2}^{(n)}=
  \begin{bmatrix}
    0& 0& -\alpha_{1}^{(n)}e^{{\rm i}\beta_{2}^{(n)}\zeta} &
    -\alpha_{1}^{(n)}e^{-{\rm i}\beta_{2}^{(n)}\zeta}\\[5pt]
    \alpha_{1}^{(n)}e^{{\rm i}\beta_{2}^{(n)}\zeta} &
    \alpha_{1}^{(n)}e^{-{\rm i}\beta_{2}^{(n)}\zeta}&     0&
0\\[5pt]
    \alpha_{2}^{(n)}& \alpha_{2}^{(n)}& \beta_{2}^{(n)} &
-\beta_{2}^{(n)}\\[5pt]
    \alpha_{2}^{(n)}e^{{\rm i}\beta_{2}^{(n)}\zeta} &
    \alpha_2^{(n)}e^{-{\rm i}\beta_{2}^{(n)}\zeta}&
    \beta_{2}^{(n)}e^{{\rm i}\beta_{2}^{(n)}\zeta} &
    -\beta_{2}^{(n)}e^{-{\rm i}\beta_{2}^{(n)}\zeta}
  \end{bmatrix}.
\end{equation*}
To obtain the above linear system \eqref{le}, we have used the Helmholtz
decomposition \eqref{chdv} and the homogeneous Dirichlet boundary condition
\[
\hat{\boldsymbol v}(\boldsymbol r, h+\delta)=0\quad\text{on} ~
\Gamma^{\rm PML}
\]
due to the PML absorbing layer.

Using the Helmholtz decomposition \eqref{chdv} and \eqref{slu}, we get
\begin{align}\label{cvre}
&\hat{\boldsymbol v}(\boldsymbol x)={\rm i} \sum_{n\in\mathbb{Z}^2}
\begin{bmatrix}
          \alpha_{1}^{(n)}\\[5pt]
          \alpha_{2}^{(n)}\\[5pt]
          \beta_{1}^{(n)}
         \end{bmatrix}
A^{(n)}e^{{\rm i}\big(\boldsymbol{\alpha}^{(n)}\cdot\boldsymbol{r}
+\beta_{1}^{(n)}\int_{h}^{x_3}\rho(\tau){\rm d}\tau\big)}
+\begin{bmatrix}
          \alpha_{1}^{(n)}\\[5pt]
          \alpha_{2}^{(n)}\\[5pt]
          -\beta_{1}^{(n)}
     \end{bmatrix}
B^{(n)}e^{{\rm i}\big(\boldsymbol{\alpha}^{(n)}\cdot\boldsymbol{r}
-\beta_{1}^{(n)}\int_{h}^{x_3}\rho(\tau){\rm d}\tau\big)}\notag\\
 &+\begin{bmatrix}
\alpha_{2}^{(n)}C_{ 3}^{(n)}-\beta_{2}^{(n)}C_{ 2}^{(n)}\\[5pt]
          \beta_{2}^{(n)}C_{ 1}^{(n)}-\alpha_{1}^{(n)}C_{ 3}^{(n)}\\[5pt]
                      \alpha_{1}^{(n)}C_{
2}^{(n)}-\alpha_{2}^{(n)}C_{ 1}^{(n)}
                     \end{bmatrix}
e^{{\rm i}\big(\boldsymbol{\alpha}^{(n)}\cdot\boldsymbol{r}
+\beta_{2}^{(n)}\int_{h}^{x_3}\rho(\tau){\rm d}\tau\big)}
+\begin{bmatrix}
\alpha_{2}^{(n)}D_{ 3}^{(n)}+\beta_{2}^{(n)}D_{ 2}^{(n)}\\[5pt]
-\beta_{2}^{(n)}D_{ 1}^{(n)}-\alpha_{1}^{(n)}D_{ 3}^{(n)}\\[5pt]
\alpha_{1}^{(n)}D_{ 2}^{(n)}-\alpha_{2}^{(n)}D_{ 1}^{(n)}
\end{bmatrix}
e^{{\rm i}\big(\boldsymbol{\alpha}^{(n)}\cdot\boldsymbol{r}
-\beta_{2}^{(n)}\int_{h}^{x_3}\rho(\tau){\rm d}\tau\big)}.
\end{align}

It follows from \eqref{cvre} that we have
\[
\mathscr{D}\hat{\boldsymbol v}=\mu\partial_{x_3}\hat{
\boldsymbol v}+(\lambda+\mu)(\nabla\cdot\hat{\boldsymbol
v})\boldsymbol e_3=\sum\limits_{n\in\mathbb{Z}^2}\mu\mathbf{P}^{(n)}
\mathbf{X}^{(n)}e^{{\rm
i}\boldsymbol{\alpha}^{(n)}\cdot\boldsymbol{x}}\quad\text{on}~ \Gamma_h,
\]
where
\begin{align*}
&\mathbf{P}^{(n)}=\\
&\begin{bmatrix}
  -\alpha_{1}^{(n)}\beta_{1}^{(n)}& \alpha_{1}^{(n)}\beta_{1}^{(n)}&
                              0 &                             0&
  (\beta_{2}^{(n)})^2         &         (\beta_{2}^{(n)})^2&
  -\alpha_{2}^{(n)}\beta_{2}^{(n)}& \alpha_{2}^{(n)}\beta_{2}^{(n)}\\[5pt]
  -\alpha_{2}^{(n)}\beta_{1}^{(n)}& \alpha_{2}^{(n)}\beta_{1}^{(n)}&
         -(\beta_{2}^{(n)})^2 &        -(\beta_{2}^{(n)})^2&
                               0&                             0&
   \alpha_{1}^{(n)}\beta_{2}^{(n)}&-\alpha_{1}^{(n)}\beta_{2}^{(n)}\\[5pt]
          -(\beta_{2}^{(n)})^2&        -(\beta_{2}^{(n)})^2&
   \alpha_{2}^{(n)}\beta_{2}^{(n)}&-\alpha_{2}^{(n)}\beta_{2}^{(n)}&
  -\alpha_{1}^{(n)}\beta_{2}^{(n)}& \alpha_{1}^{(n)}\beta_{2}^{(n)}&
                               0&                             0\\[5pt]
\end{bmatrix}.
\end{align*}
Combining \eqref{cvre} and \eqref{le}, we derive the transparent boundary
condition for the PML problem:
\[
\mathscr{D}\hat{\boldsymbol v}=\mathscr{T}^{\rm
PML}\hat{\boldsymbol v}:=\sum_{n\in\mathbb{Z}^2}\hat{\mathbf M}^{(n)}
\hat{\boldsymbol v}^{(n)}(b)e^{{\rm
i}\boldsymbol{\alpha}^{(n)}\cdot\boldsymbol{r}}\quad\text{on} ~ \Gamma_h,
\]
where the matrix
\[
\hat{\mathbf M}^{(n)}=
 \begin{bmatrix}
   \hat{m}_{ 11}^{(n)}&\hat{m}_{ 12}^{(n)}&\hat{m}_{ 13}^{(n)}\\[5pt]
   \hat{m}_{ 21}^{(n)}&\hat{m}_{ 22}^{(n)}&\hat{m}_{ 23}^{(n)}\\[5pt]
   \hat{m}_{ 31}^{(n)}&\hat{m}_{ 32}^{(n)}&\hat{m}_{ 33}^{(n)}
 \end{bmatrix}.
\]
Here the entries  of $\hat{\mathbf M}^{(n)}$ are
\begin{align*}
  \hat{m}_{ 11}^{(n)}=&\frac{{\rm i}\mu}{\chi^{(n)}\hat{\chi}^{(n)}}
\Big[\chi^{(n)}\big( (\alpha_{1}^{(n)})^2(\beta_{1}^{(n)}-\beta_{2}^{(n)})+\beta_{2}^{(n)}\chi^{(n)}\big)
  (\varepsilon^{(n)}+1)\\
&\quad+4(\alpha_{2}^{(n)})^2\beta_{1}^{(n)}(\beta_{2}^{(n)})^2\theta^{(n)}
(\varepsilon^{(n)}+1)-2(\alpha_{1}^{(n)})^2\beta_{1}^{(n)}\kappa_{2}^2 \eta^{(n)}\Big],\\
  \hat{m}_{ 12}^{(n)}=&\hat{m}_{2 1}^{(n)}=\frac{{\rm
i}\mu\alpha_{1}^{(n)}\alpha_{2}^{(n)}}{\chi^{(n)}\hat{\chi}^{(n)}}
  \Big[\chi^{(n)}(\beta_{1}^{(n)}-\beta_{2}^{(n)})(\varepsilon^{(n)}+1)
  -2\chi^{(n)}\beta_{1}^{(n)}\eta^{(n)}\\
&\quad-4\beta_{1}^{(n)}(\beta_{2}^{(n)})^2\theta^{(n)}(\varepsilon^{(n)} +1)
-2\beta_{1}^{(n)}\beta_{2}^{(n)}(\beta_{1}^{(n)}-\beta_{2}^{(n)}
)\gamma^{(n)}\Big],\\
  \hat{m}_{ 13}^{(n)}=&-\hat{m}_{31}^{(n)} =\frac{{\rm
i}\mu\alpha_{1}^{(n)}\beta_{2}^{(n)}}{\chi^{(n)}\hat{\chi}^{(n)}}
  \Big[\chi^{(n)}(\beta_{1}^{(n)}-\beta_{2}^{(n)})
+2\beta_{1}^{(n)}(\kappa_{2}^2-2(\beta_{2}^{(n)})^2)\theta^{(n)}\Big ],\\
  \hat{m}_{ 22}^{(n)}=&\frac{{\rm i}\mu}{\chi^{(n)}\hat{\chi}_n^{(n)}}
\Big[\chi^{(n)}[(\alpha_{2}^{(n)})^2(\beta_{1}^{(n)}-\beta_{2}^{(n)})+\beta_{2}^{(n)}\chi^{(n)}]
  (\varepsilon^{(n)}+1)\\
&\quad+4(\alpha_{1}^{(n)})^2\beta_{1}^{(n)}(\beta_{2}^{(n)})^2\theta^{(n)}
(\varepsilon^{(n)}+1)-2(\alpha_{2}^{(n)})^2\beta_{1}^{(n)}\kappa_{2}^2\eta^{(n)}\Big],\\
  \hat{m}_{23}^{(n)}=&-\hat{m}_{32}^{(n)} =\frac{{\rm
i}\mu\alpha_{2}^{(n)}\beta_{2}^{(n)}}{\chi^{(n)}\hat{\chi}^{(n)}}
  \Big[\chi^{(n)}(\beta_{1}^{(n)}-\beta_{2}^{(n)})
+2\beta_{1}^{(n)}(\kappa_{2}^2-2(\beta_{2}^{(n)})^2)\theta^{(n)}\Big
],\\
\hat{m}_{ 33}^{(n)}=&\frac{{\rm
i}\mu\beta_{2}^{(n)}\kappa_{2}^2}{\chi^{(n)} \hat{\chi}^{(n)}}
  \Big[\chi^{(n)}(\varepsilon^{(n)}+1)
  -2\beta_{1}^{(n)}\beta_{2}^{(n)}\eta^{(n)}\Big],
\end{align*}
where
\begin{align*}
  \varepsilon^{(n)}=&
  2e^{{\rm i}\beta_{2}^{(n)}\zeta}/
  (e^{-{\rm i}\beta_{2}^{(n)}\zeta}-e^{{\rm
i}\beta_{2}^{(n)}\zeta}),\\
  \theta^{(n)}=&(e^{{\rm i}\beta_{2}^{(n)}\zeta}-e^{{\rm
i}\beta_{1}^{(n)}\zeta})^2/
  ((1-e^{2{\rm i}\beta_{1}^{(n)}\zeta})(1-e^{2{\rm
i}\beta_{2}^{(n)}\zeta})),\\
  \eta^{(n)}=&(e^{2{\rm i}\beta_{2}^{(n)}\zeta}-e^{2{\rm
i}\beta_{1}^{(n)}\zeta})/
  ((1-e^{2{\rm i}\beta_{1}^{(n)}\zeta})(1-e^{2{\rm
i}\beta_{2}^{(n)}\zeta})),\\
  \gamma^{(n)}=&(e^{2{\rm i}\beta_{1}^{(n)}\zeta}+e^{4{\rm
i}\beta_{2}^{(n)}\zeta})^2/
  ((1-e^{2{\rm i}\beta_{1}^{(n)}\zeta})(1-e^{2{\rm
i}\beta_{2}^{(n)}\zeta})^2),\\
  \hat{\chi}^{(n)}=&\chi^{(n)}+4\big( (\alpha_{1}^{(n)})^2
+(\alpha_{2}^{(n)})^2\big)\beta_{1}^{(n)}\beta_{2}^{(n)}\theta_n/\chi^{(n)} .
\end{align*}
Equivalently, we have the transparent boundary condition for the total field
$\hat{\boldsymbol u}$:
\[
 \mathscr{D}\hat{\boldsymbol u}=\mathscr{T}^{\rm PML}\hat{\boldsymbol
u}+\boldsymbol{g}^{\rm PML} \quad\text{on} ~ \Gamma_h,
\]
where $\boldsymbol{g}^{\rm PML}=\mathscr{D}\hat{\boldsymbol u}_{\rm
inc}-\mathscr{T}^{\rm PML}\hat{\boldsymbol u}_{\rm inc}$.

The PML problem can be reduced to the following boundary value problem:
\begin{equation}\label{cbvp}
 \begin{cases}
  \mu\Delta\boldsymbol{u}^{\rm PML}+(\lambda+\mu)\nabla
\nabla\cdot\boldsymbol{u}^{\rm
PML}+\omega^2\boldsymbol{u}^{\rm PML}=0\quad&\text{in}~ \Omega,\\
\mathscr{D}\boldsymbol{u}^{\rm PML}=\mathscr{T}^{\rm PML}\boldsymbol{u}^{\rm
PML}+\boldsymbol{g}^{\rm PML}\quad&\text{on}~ \Gamma_h,\\
\boldsymbol{u}^{\rm PML}=0 \quad&\text{on}~ \Gamma_f.
 \end{cases}
\end{equation}
The weak formulation of \eqref{cbvp} is to find $\boldsymbol{u}^{\rm PML}\in
H^1_{\rm qp}(\Omega)^3$ such that
\begin{equation}\label{cwp}
 a^{\rm PML}(\boldsymbol{u}^{\rm PML},
\boldsymbol{v})=\langle\boldsymbol{g}^{\rm PML},
\boldsymbol{v}\rangle_{\Gamma_h}\quad\forall\,\boldsymbol{v}\in H^1_{\rm
qp}(\Omega)^3,
\end{equation}
where the sesquilinear form $a^{\rm PML}: H^1_{\rm qp}(\Omega)^3\times
H^1_{\rm qp}(\Omega)^3\to\mathbb{C}$ is defined by
\begin{align}\label{csf}
 a^{\rm PML}(\boldsymbol{u}, \boldsymbol{v})=\mu\int_\Omega
\nabla\boldsymbol{u}:\nabla\bar{\boldsymbol
v}{\rm d}\boldsymbol{x}+(\lambda+\mu)\int_\Omega (\nabla\cdot\boldsymbol{u}
)(\nabla\cdot\bar{\boldsymbol v})\,{\rm d}\boldsymbol{x}\notag\\
-\omega^2\int_\Omega \boldsymbol{u}\cdot\bar{\boldsymbol v}\, {\rm
d}\boldsymbol{x}- \langle\mathscr{T}^{\rm
PML}\boldsymbol{u}, \boldsymbol{v}\rangle_{\Gamma_h}.
\end{align}

The following lemma establishes the relationship between the variational
problem \eqref{cwp} and the weak formulation \eqref{twp}. The proof is
straightforward based on our constructions of the transparent boundary
conditions for the PML problem. The details of the proof is omitted for
simplicity.

\begin{lemm}
Any solution $\hat{\boldsymbol u}$ of the variational problem \eqref{twp}
restricted to $\Omega$ is a solution of the variational \eqref{cwp}; conversely,
any solution $\boldsymbol{u}^{\rm PML}$ of the variational problem \eqref{cwp}
can be uniquely extended to the whole domain to be a solution $\hat{\boldsymbol
u}$ of the variational problem \eqref{twp} in $D$.
\end{lemm}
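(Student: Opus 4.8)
The plan is to reduce both implications to a single algebraic identity between the two sesquilinear forms, supplemented by a Green's identity on the layer. Since the stretching \eqref{cs} satisfies $\rho\equiv 1$ on $\Omega$, I would first expand the form of \eqref{twp} restricted to $\Omega$ and verify term by term that $b_\Omega$ coincides with the volume (non-boundary) part of $a^{\rm PML}$ in \eqref{csf}; consequently
\[
a^{\rm PML}(\boldsymbol u,\boldsymbol v)=b_\Omega(\boldsymbol u,\boldsymbol v)-\langle\mathscr{T}^{\rm PML}\boldsymbol u,\boldsymbol v\rangle_{\Gamma_h}\qquad\text{for all }\boldsymbol u,\boldsymbol v.
\]
This identity, together with the fact that $\mathscr{T}^{\rm PML}$ was built precisely as the operator sending the trace on $\Gamma_h$ of a solution of the stretched Navier system \eqref{cvne} to its conormal flux $\mathscr{D}$, is the backbone of both directions.

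For the forward direction I would take a solution $\hat{\boldsymbol u}$ of \eqref{twp}, fix a test field $\boldsymbol v\in H^1_{\rm qp}(\Omega)^3$ vanishing on $\Gamma_f$, and extend it (still denoted $\boldsymbol v$) to an element of $H^1_{0,\rm qp}(D)^3$ vanishing on $\Gamma^{\rm PML}$. Testing \eqref{twp} against fields supported in $\Omega^{\rm PML}$ first shows that $\mathscr{L}\hat{\boldsymbol u}=\mathscr{L}\boldsymbol u^{\rm inc}$ there with $\hat{\boldsymbol u}=\boldsymbol u^{\rm inc}$ on $\Gamma^{\rm PML}$, so that $\hat{\boldsymbol u}-\boldsymbol u^{\rm inc}$ must equal the separated field \eqref{cvre} whose coefficients solve \eqref{le}, giving $\mathscr{D}\hat{\boldsymbol u}=\mathscr{T}^{\rm PML}\hat{\boldsymbol u}+\boldsymbol g^{\rm PML}$ on $\Gamma_h$. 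Splitting $b_D=b_\Omega+b_{\Omega^{\rm PML}}$ and integrating by parts on the layer yields
\[
b_{\Omega^{\rm PML}}(\hat{\boldsymbol u},\boldsymbol v)=-\int_{\Omega^{\rm PML}}(\mathscr{L}\hat{\boldsymbol u})\cdot\bar{\boldsymbol v}\,{\rm d}\boldsymbol x-\langle\mathscr{D}\hat{\boldsymbol u},\boldsymbol v\rangle_{\Gamma_h},
\]
the $\Gamma^{\rm PML}$ flux vanishing since $\boldsymbol v=0$ there and the lateral contributions cancelling by quasi-periodicity. Substituting $\mathscr{L}\hat{\boldsymbol u}=\boldsymbol g$, the two volume integrals over $\Omega^{\rm PML}$ cancel against the right-hand side $-\int_D\boldsymbol g\cdot\bar{\boldsymbol v}$ of \eqref{twp} (recall $\boldsymbol g=0$ in $\Omega$), leaving $b_\Omega(\hat{\boldsymbol u},\boldsymbol v)-\langle\mathscr{D}\hat{\boldsymbol u},\boldsymbol v\rangle_{\Gamma_h}=0$. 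Inserting the boundary relation above and using the displayed identity for $a^{\rm PML}$ gives exactly $a^{\rm PML}(\hat{\boldsymbol u},\boldsymbol v)=\langle\boldsymbol g^{\rm PML},\boldsymbol v\rangle_{\Gamma_h}$, i.e.\ \eqref{cwp}.

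For the converse, given a solution $\boldsymbol u^{\rm PML}$ of \eqref{cwp} I would define $\hat{\boldsymbol u}$ on $\Omega^{\rm PML}$ as $\boldsymbol u^{\rm inc}$ plus the mode expansion \eqref{cvre} whose coefficients are the unique solution of \eqref{le} with data the trace $\boldsymbol u^{\rm PML}(\cdot,h)-\boldsymbol u^{\rm inc}(\cdot,h)$; the unique solvability of \eqref{le} is what makes this extension well defined and is the sole place where invertibility of $\mathbf A^{(n)}$ is used. Since the two traces agree on $\Gamma_h$, the glued field lies in $H^1_{\rm qp}(D)^3$, equals $\boldsymbol u^{\rm inc}$ on $\Gamma^{\rm PML}$ and vanishes on $\Gamma_f$, and reading the above chain of equalities from right to left shows that it satisfies \eqref{twp} against every $\boldsymbol v\in H^1_{0,\rm qp}(D)^3$. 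The step I expect to be the main obstacle is the regularity/representation claim: one must justify that the merely weak solution of the stretched system in $\Omega^{\rm PML}$ is regular enough in $x_3$ to admit the separated representation \eqref{slu}--\eqref{cvre} and to render the trace $\mathscr{D}\hat{\boldsymbol u}$ on $\Gamma_h$ meaningful, and one must verify the Green's identity for $\mathscr{L}$ with the correct conormal flux $\mathscr{D}$ and sign on $\Gamma_h$. Once these are in place, the remainder is bookkeeping around the identity $a^{\rm PML}=b_\Omega-\langle\mathscr{T}^{\rm PML}\cdot,\cdot\rangle_{\Gamma_h}$.
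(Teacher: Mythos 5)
Your argument is correct and is precisely the route the paper intends: the authors omit the details, stating only that the proof is ``straightforward based on our constructions of the transparent boundary conditions for the PML problem,'' and your reduction to the identity $a^{\rm PML}(\cdot,\cdot)=b_\Omega(\cdot,\cdot)-\langle\mathscr{T}^{\rm PML}\cdot,\cdot\rangle_{\Gamma_h}$ together with the Green's identity on $\Omega^{\rm PML}$ and the unique solvability of \eqref{le} is exactly that construction made explicit. The two technical points you flag (convergence of the modal series \eqref{cvre} in $H^1(\Omega^{\rm PML})$ and the identification of the conormal flux of $b_{\Omega^{\rm PML}}$ with $\mathscr{D}$ on $\Gamma_h$, using $\rho(h)=1$) are real but standard, and are likewise passed over in the paper.
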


\subsection{Convergence of the PML solution}

Now we turn to estimating the error between $\boldsymbol{u}^{\rm PML}$ and
$\boldsymbol{u}$. The key is to estimate the error of the boundary operators
$\mathscr{T}^{\rm PML}$ and $\mathscr{T}$.

Let
\[
{\Delta}^-_j=\min\{\Delta_{j}^{(n)}: n\in
U_j\},\quad {\Delta}^+_{j}=\min\{\Delta_{j}^{(n)}: n\notin
U_j\},
\]
where
\[
\Delta_j^{(n)}=|\kappa_j^2-|\boldsymbol{\alpha}^{(n)}|^2|^{1/2},\quad
U_j=\{n: |\boldsymbol{\alpha}^{(n)}|<\kappa_j\}.
\]
Denote
\begin{align*}
K=\frac{48(49+\kappa_{2}^2)^{7/2}}{\kappa_{1}^2}
\times&\max\bigg{\{}\frac{1}{e^{{\Delta}^-_{\,1}{\rm
Im}\zeta} - 1},\, \frac{1}{(e^{\frac{1}{2}{\Delta}^-_{\,1}{\rm
Im}\zeta} - 1)^2},\,
\frac{1}{(e^{\frac{1}{3}{\Delta}^-_{\,1}{\rm Im}\zeta} - 1)^3},\\
&\hspace{1.15cm}\frac{1}{e^{{\Delta}^+_{2}{\rm
Re}\zeta} - 1},\, \frac{1}{(e^{\frac{1}{2}{\Delta}^+_{2}{\rm
Re}\zeta} - 1)^2},\, \frac{1}{(e^{\frac{1}{3}{\Delta}^+_{2}{\rm
Re}\zeta} - 1)^3},\\
&\hspace{1.15cm}\frac{1}{e^{{\Delta}^-_{\,2}{\rm Im}\zeta} - 1},\,
\frac{(e^{-{\Delta}^+_{1}{\rm Re}\zeta}
     +e^{-{\Delta}^-_{\,2}{\rm Im}\zeta})^2}
     {(1-e^{-2{\Delta}^+_{1}{\rm Im}\zeta})
      (1-e^{-2{\Delta}^-_{\,2}{\rm Re}\zeta})^2},\,
\bigg{\}}.
\end{align*}
The constant $K$ can be used to control the modeling error between the
PML problem and the original scattering problem. Once the incoming plane wave
$\boldsymbol{u}^{\rm inc}$ is fixed, the quantities ${\Delta}^-_j,
{\Delta}^+_{j}$ are fixed. Thus the constant $K$ approaches to zero
exponentially as the PML parameters ${\rm Re}\zeta$ and ${\rm Im}\zeta$ tend to
infinity. Recalling the definition of $\zeta$ in \eqref{zeta}, we know that
${\rm Re}\zeta$ and ${\rm Im}\zeta$ can be calculated by the medium property
$\rho(x_3)$, which is usually taken as a power function:
\[
 \rho(x_3)=1+\sigma\left(\dfrac{x_3-b}{\delta}\right)^{m} \quad\text{if}~ x_3\geq b,\quad m\geq 1.
\]
Thus we have
\[
 {\rm Re}\zeta=\left(1+\frac{{\rm Re}\sigma}{m+1}\right)\delta, \quad
 {\rm Im}\zeta=\left(\frac{{\rm Im}\sigma}{m+1}\right)\delta.
\]
In practice, we may pick some appropriate PML parameters $\sigma$ and $\delta$
such that ${\rm Re}\zeta\geq 1$.

\begin{lemm}\label{boe}
For any $\boldsymbol{u}, \boldsymbol{v}\in H^1_{\rm qp}(\Omega)^3$, we have
\[
|\langle (\mathscr{T}^{\rm PML}-\mathscr{T})\boldsymbol{u},
\boldsymbol{v}\rangle_{\Gamma_h}|\leq
\hat{K}\|\boldsymbol{u}\|_{L^2(\Gamma_h)^3}
\|\boldsymbol{v}\|_{L^2(\Gamma_h)^3},
\]
where $\hat{K}=11\mu^2 K/\kappa_{1}^4$.
\end{lemm}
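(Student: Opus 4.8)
The plan is to exploit that both $\mathscr{T}$ and $\mathscr{T}^{\rm PML}$ are Fourier multipliers whose matrix symbols $M^{(n)}$ and $\hat{\mathbf M}^{(n)}$ act diagonally on the modes $e^{{\rm i}\boldsymbol\alpha^{(n)}\cdot\boldsymbol r}$. Expanding $\boldsymbol u$ and $\boldsymbol v$ on $\Gamma_h$ and using the orthogonality of $\{e^{{\rm i}\boldsymbol\alpha^{(n)}\cdot\boldsymbol r}\}$, which have $L^2(\Gamma_h)$-norm $\Lambda_1\Lambda_2$, I would first write
\[
\langle(\mathscr{T}^{\rm PML}-\mathscr{T})\boldsymbol u,\boldsymbol v\rangle_{\Gamma_h}
=\Lambda_1\Lambda_2\sum_{n\in\mathbb Z^2}\big[(\hat{\mathbf M}^{(n)}-M^{(n)})\boldsymbol u^{(n)}(h)\big]\cdot\overline{\boldsymbol v^{(n)}(h)}.
\]
Applying the Cauchy--Schwarz inequality first to each matrix--vector pairing and then to the sum over $n$, and invoking Parseval's identity $\|\boldsymbol u\|_{L^2(\Gamma_h)^3}^2=\Lambda_1\Lambda_2\sum_n|\boldsymbol u^{(n)}(h)|^2$, the whole lemma reduces to the uniform symbol estimate $\|\hat{\mathbf M}^{(n)}-M^{(n)}\|_2\le\hat K$ for every $n\in\mathbb Z^2$, where $\|\cdot\|_2$ denotes the matrix spectral norm and $\hat K=11\mu^2K/\kappa_1^4$.

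The next step is an algebraic reduction of the symbol difference. Substituting the identity $\hat\chi^{(n)}=\chi^{(n)}+4\big((\alpha_1^{(n)})^2+(\alpha_2^{(n)})^2\big)\beta_1^{(n)}\beta_2^{(n)}\theta^{(n)}/\chi^{(n)}$ and writing each entry of $M^{(n)}$ over the common denominator $\chi^{(n)}\hat\chi^{(n)}$, I would show that the constant term $1$ inside every factor $(\varepsilon^{(n)}+1)$ cancels exactly against $M^{(n)}$. Consequently each entry of $\hat{\mathbf M}^{(n)}-M^{(n)}$ is a finite sum of terms, each of the form (a polynomial in $\alpha_1^{(n)},\alpha_2^{(n)},\beta_1^{(n)},\beta_2^{(n)},\kappa_2$) divided by a product of factors $\chi^{(n)}$ and $\hat\chi^{(n)}$, multiplied by exactly one of the exponentially small quantities $\varepsilon^{(n)},\theta^{(n)},\eta^{(n)},\gamma^{(n)}$. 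This confirms in passing that $\hat{\mathbf M}^{(n)}\to M^{(n)}$ as the PML correction factors vanish, and isolates all the smallness in these four quantities.

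It then remains to estimate these terms uniformly in $n$, which rests on three ingredients. First, the lower bounds $|\chi^{(n)}|\ge c_0\,\kappa_1^2$ and $|\hat\chi^{(n)}|\ge c_0\,\kappa_1^2$ for some constant $c_0>0$ independent of $n$, obtained by a case analysis according to whether $n\in U_j$ or $n\notin U_j$ (recalling $\kappa_1<\kappa_2$, so that $U_1\subseteq U_2$) together with the non-resonance assumption $\kappa_j\neq|\boldsymbol\alpha^{(n)}|$; these denominators supply the powers of $\kappa_1$ in $\hat K$. Second, the bounds on $|\varepsilon^{(n)}|,|\theta^{(n)}|,|\eta^{(n)}|,|\gamma^{(n)}|$ by the explicit quantities in the definition of $K$, using $|e^{{\rm i}\beta_j^{(n)}\zeta}|=e^{-\Delta_j^{(n)}{\rm Im}\zeta}\le e^{-\Delta_j^-{\rm Im}\zeta}$ for $n\in U_j$ and $|e^{{\rm i}\beta_j^{(n)}\zeta}|=e^{-\Delta_j^{(n)}{\rm Re}\zeta}\le e^{-\Delta_j^+{\rm Re}\zeta}$ for $n\notin U_j$, together with elementary lower bounds for the denominators such as $|1-e^{2{\rm i}\beta_j^{(n)}\zeta}|\ge 1-e^{-2\Delta_j^{(n)}{\rm Im}\zeta}$. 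Third, the polynomial prefactors are controlled through $|\alpha_j^{(n)}|,|\beta_j^{(n)}|\le(|\boldsymbol\alpha^{(n)}|^2+\kappa_2^2)^{1/2}$; since the numerators have total degree at most seven in $\alpha_1^{(n)},\alpha_2^{(n)},\beta_1^{(n)},\beta_2^{(n)}$, this produces a factor $(|\boldsymbol\alpha^{(n)}|^2+\kappa_2^2)^{7/2}$.

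The main obstacle is that these prefactors grow polynomially in $|\boldsymbol\alpha^{(n)}|$, so that a mode-by-mode bound is not obviously uniform. The decisive point is that the exponential smallness carried by $\varepsilon^{(n)},\theta^{(n)},\eta^{(n)},\gamma^{(n)}$ decays at the rate $\Delta_j^{(n)}\to\infty$ as $|n|\to\infty$ and hence dominates any fixed polynomial, so that the supremum over $n$ is finite and is governed by the slowest-decaying (smallest $\Delta_j^{(n)}$) modes. Making this uniform---absorbing the polynomial growth so that only the worst-case exponents $\Delta_j^{\pm}$ remain---is precisely what yields the constant $48(49+\kappa_2^2)^{7/2}/\kappa_1^2$ and the maximum of reciprocal-exponential terms in $K$. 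Estimating every entry of $\hat{\mathbf M}^{(n)}-M^{(n)}$ in this way and using $\|\hat{\mathbf M}^{(n)}-M^{(n)}\|_2\le\|\hat{\mathbf M}^{(n)}-M^{(n)}\|_F$ gives $\|\hat{\mathbf M}^{(n)}-M^{(n)}\|_2\le 11\mu^2 K/\kappa_1^4=\hat K$ uniformly in $n$, which by the first paragraph completes the proof.
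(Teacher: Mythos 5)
Your proposal follows essentially the same route as the paper: the Fourier expansion on $\Gamma_h$ with Parseval and Cauchy--Schwarz reduces the lemma to a uniform bound on the symbol difference $\|\hat{\mathbf M}^{(n)}-M^{(n)}\|$, and that bound is then obtained exactly as in the paper's Proposition~A.3 --- by isolating the exponentially small factors $\varepsilon^{(n)},\theta^{(n)},\eta^{(n)},\gamma^{(n)}$, lower-bounding $|\chi^{(n)}|$ and $|\hat\chi^{(n)}|$ by multiples of $\kappa_1^2$, and absorbing the degree-seven polynomial growth in $|\boldsymbol\alpha^{(n)}|$ into the exponential decay (the role of the paper's Proposition~A.2). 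The argument is correct and matches the paper's in structure and in all essential estimates.
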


\begin{proof}
For any $\boldsymbol{u}, \boldsymbol{v}\in H^1_{\rm qp}(\Omega)^3$, we have
the following Fourier series expansions:
\[
 \boldsymbol{u}(\boldsymbol r,h)=\sum_{n\in\mathbb{Z}^2}\boldsymbol{u}^{(n)}
(h)  e^{{\rm i}\boldsymbol{\alpha}^{(n)}\cdot\boldsymbol{r}},\quad
\boldsymbol{v}(\boldsymbol r, h)=\sum_{n\in\mathbb{Z}^2}\boldsymbol{v}^{(n)}(h)
 e^{{\rm i}\boldsymbol{\alpha}^{(n)}\cdot\boldsymbol{r}},
\]
which gives
\[
 \|\boldsymbol{u}\|^2_{L^2(\Gamma_h)^3}=\Lambda_1\Lambda_2\sum_{n\in\mathbb{Z
}^2}|\boldsymbol {u}^{(n)} (h)|^2 , \quad
\|\boldsymbol{v}\|^2_{L^2(\Gamma_h)^3}=\Lambda_1\Lambda_2\sum_{
n\in\mathbb{Z}^2}|\boldsymbol{v}^{(n)} (h)|^2.
\]
It follows from the orthogonality of Fourier series, the Cauchy--Schwarz
inequality, and Proposition \ref{mfe} that we have
\begin{align*}
&|\langle (\mathscr{T}^{\rm PML}-\mathscr{T})\boldsymbol{u},
\boldsymbol{v}\rangle_{\Gamma_h}|
=\bigg{|}\Lambda_1\Lambda_2\sum_{n\in\mathbb{Z}^2}\bigl(
(M^{(n)}-\hat{M}^{(n)})\boldsymbol{u}^{(n)}(h)\bigr)\cdot\bar{
\boldsymbol v}^{(n)}(h)\bigg{|}\\
&\leq\Bigl(\Lambda_1\Lambda_2\sum_{n\in\mathbb{Z}^2}\|M^{(n)}-\hat{M}^{(n)}
\|_F^2\, |{\boldsymbol u}^{(n)}(h)|^2\Bigr)^{1/2}
\Bigl(\Lambda_1\Lambda_2\sum_{n\in\mathbb{Z}^2}|
\boldsymbol{v}^{(n)}(h)|^2\Bigr)^{1/2}\\
&\leq\hat{K}\|\boldsymbol{u}\|_{L^2(\Gamma_h)^3}
\|\boldsymbol{v}\|_{L^2(\Gamma_h)^3},
\end{align*}
which completes the proof.
\end{proof}

Let $a=\min\{f(\boldsymbol x): \boldsymbol{x}\in \Gamma_f\}$. Denote
$\tilde\Omega=\{\boldsymbol{x}\in\mathbb{R}^3: 0<x_1<\Lambda_1,\,
0<x_2<\Lambda_2,\, a<x_3<h\}.$
\begin{lemm}\label{tr}
 For any $\boldsymbol{u}\in H^1_{\rm qp}(\Omega)^3$, we have
 \[
\|\boldsymbol{u}\|_{L^2(\Gamma_h)^3}
\leq \|\boldsymbol{u}\|_{H^{1/2}(\Gamma_h)^3}
\leq\gamma_2\|\boldsymbol{u}\|_{H^1(\Omega)^3},
 \]
where $\gamma_2=(1+(h-a)^{-1})^{1/2}$.
\end{lemm}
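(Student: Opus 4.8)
The first inequality is immediate from the definitions: comparing the two series
that define the two norms and using $(1+|\boldsymbol\alpha^{(n)}|^2)^{1/2}\ge 1$ for
every $n\in\mathbb Z^2$, one gets $\|\boldsymbol u\|_{L^2(\Gamma_h)^3}\le
\|\boldsymbol u\|_{H^{1/2}(\Gamma_h)^3}$ term by term. All the content therefore lies in the
second (trace) inequality, and since both sides split over the three scalar components it
suffices to prove it for a scalar function $u$.

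For the trace inequality I would expand $u$ in horizontal Fourier modes,
$u=\sum_{n}u^{(n)}(x_3)\,e^{{\rm i}\boldsymbol\alpha^{(n)}\cdot\boldsymbol r}$, over the slab
$\tilde\Omega$, so that by Parseval the relevant quantities decouple across $n$:
$\|u\|_{H^{1/2}(\Gamma_h)}^2=\Lambda_1\Lambda_2\sum_n(1+|\boldsymbol\alpha^{(n)}|^2)^{1/2}
|u^{(n)}(h)|^2$, while
$\|u\|_{H^1(\tilde\Omega)}^2=\Lambda_1\Lambda_2\sum_n\int_a^h\bigl((1+|\boldsymbol\alpha^{(n)}|^2)
|u^{(n)}|^2+|\partial_{x_3}u^{(n)}|^2\bigr)\,{\rm d}x_3$. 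The problem is thus reduced to a single
weighted one-dimensional trace estimate: writing $w=u^{(n)}$ and
$s=(1+|\boldsymbol\alpha^{(n)}|^2)^{1/2}\ge 1$, it is enough to show
$s|w(h)|^2\le\bigl(1+(h-a)^{-1}\bigr)\int_a^h\bigl(s^2|w|^2+|w'|^2\bigr)\,{\rm d}x_3$ for each $n$,
after which I sum over $n$ and over the three components.

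The one-dimensional estimate itself comes from the fundamental theorem of calculus. From
$|w(h)|^2=|w(x_3)|^2+\int_{x_3}^h\frac{{\rm d}}{{\rm d}t}|w(t)|^2\,{\rm d}t$, averaging $x_3$ over
$(a,h)$ gives $|w(h)|^2\le(h-a)^{-1}\int_a^h|w|^2\,{\rm d}x_3+2\int_a^h|w|\,|w'|\,{\rm d}x_3$, where
the length $h-a$ of the vertical interval furnished by $\tilde\Omega$ is exactly what produces the
coefficient $(h-a)^{-1}$. The step that needs care, and the only real obstacle, is inserting the
weight $s$ so that the constant collapses to $\gamma_2^2$ rather than to something larger.
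Multiplying by $s$, I use $s\le s^2$ to bound the zeroth-order term by $(h-a)^{-1}\int_a^h s^2|w|^2$,
and I rewrite $2s\int_a^h|w|\,|w'|\le 2\bigl(\int_a^h s^2|w|^2\bigr)^{1/2}\bigl(\int_a^h|w'|^2\bigr)^{1/2}$
by Cauchy--Schwarz (with $s$ constant in $x_3$), which the arithmetic--geometric mean inequality
controls by $\int_a^h s^2|w|^2+\int_a^h|w'|^2$. Adding the two contributions gives exactly
$s|w(h)|^2\le\bigl(1+(h-a)^{-1}\bigr)\int_a^h\bigl(s^2|w|^2+|w'|^2\bigr)\,{\rm d}x_3
=\gamma_2^2\int_a^h\bigl(s^2|w|^2+|w'|^2\bigr)\,{\rm d}x_3$.

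Summing the modal inequalities over $n$ and over the three components yields
$\|\boldsymbol u\|_{H^{1/2}(\Gamma_h)^3}\le\gamma_2\|\boldsymbol u\|_{H^1(\tilde\Omega)^3}$, the
asserted bound. I would emphasize in the write-up that the vertical extent of $\tilde\Omega$,
reaching down to $a=\min f$, is precisely what pins the constant to
$\gamma_2=(1+(h-a)^{-1})^{1/2}$; this is why $\tilde\Omega$ (rather than the shorter column above
$\Gamma_f$) is the natural domain on which to run the averaging. The remaining bookkeeping is
routine, and the genuinely delicate point is the weighted recombination in the previous paragraph,
which must be carried out so that the weight $s$ is absorbed cleanly into the $H^1$ energy without
inflating the constant.
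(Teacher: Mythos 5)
Your proposal is correct and follows essentially the same route as the paper: a zero extension to the slab $\tilde\Omega$, a mode-by-mode one-dimensional trace estimate obtained by averaging the fundamental theorem of calculus over $(a,h)$, and a Young-type absorption of the cross term into the weighted $H^1$ energy, yielding the same constant $\gamma_2=(1+(h-a)^{-1})^{1/2}$. The only differences are cosmetic (you apply Cauchy--Schwarz in $x_3$ before the arithmetic--geometric mean inequality, whereas the paper applies Young's inequality pointwise, and the paper keeps $\gamma_2^2$ only on the zeroth-order term), so there is nothing substantive to add.
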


\begin{proof}
A simple calculation yields
\begin{align*}
 (h-a)|u(h)|^2
 &=\int_{a}^{h}|u(x_3)|^2{\rm d}x_3
   +\int_{a}^{h}\int_{x_3}^{h}
   \frac{{\rm d}}{{\rm d} t}|u(t)|^2{\rm d}t{\rm d}{x_3}\\
&\leq \int_{a}^{h}|u(x_3)|^2{\rm d}{x_3}
+(h-a)\int_{a}^{h} 2|u(t)||u'(t)|{\rm d}t,
\end{align*}
which gives by applying the Young's inequality that
\[
 (1+|\boldsymbol{\alpha}^{(n)}|^2)^{1/2}|u(h)|^2
 \leq \gamma_2^2 (1+|\boldsymbol{\alpha}^{(n)}|^2)
 \int_{a}^{h} |u(t)|^2{\rm d}t+\int_{a}^{h} |u'(t)|^2{\rm d}t.
\]
Given $\boldsymbol{u}\in H^1_{\rm qp}(\Omega)^3$, we consider the zero extension
\[
\tilde{\boldsymbol{u}}=
\begin{cases}
\boldsymbol{u} \quad&\text{in}~ \Omega,\\
0    \quad&\text{in}~ \tilde\Omega\backslash\bar\Omega,
 \end{cases}
\]
which has the Fourier series expansion
\[
\tilde{\boldsymbol{u}}(\boldsymbol x)=\sum_{n\in\mathbb{Z}^2}\tilde{\boldsymbol{
u}}^{(n)}(x_3)e^{{\rm i}\boldsymbol{\alpha}^{(n)}\cdot\boldsymbol{r}}
\quad\text{in} ~ \tilde\Omega.
\]
By definitions, we have
\[
 \|\tilde{\boldsymbol {u}}\|^2_{H^{1/2}(\Gamma_h)^3}
 =\Lambda_1\Lambda_2\sum_{n\in\mathbb{Z}^2}
(1+|\boldsymbol{\alpha}^{(n)}|^2)^{1/2}|\tilde{\boldsymbol {u}}^{(n)} (h)|^2
\]
and
\[
 \|\tilde{\boldsymbol{u}}\|^2_{H^1(\Omega)^3}
 =\Lambda_1\Lambda_2\sum_{n\in\mathbb{Z}^2} \int_{a}^{h}
(1+|\boldsymbol{\alpha}^{(n)}|^2)|\tilde{\boldsymbol
{u}}^{(n)}(x_3)|^2+|\boldsymbol{u}^{(n)'} (x_3)|^2 {\rm d}x_3.
\]
The proof is completed by combining the above estimates and noting
$\|\boldsymbol{u}\|^2_{H^{1/2}(\Gamma_h)^3}=\|\tilde{\boldsymbol{u}}\|^2_{H^{1/2
} (\Gamma_h)^3}$ and
$\|\boldsymbol{u}\|^2_{H^1(\Omega)^3}=\|\tilde{\boldsymbol{u}}\|^2_{
H^1(\tilde\Omega)^3}$.
\end{proof}

\begin{theo}
 Let $\gamma_1$ and $\gamma_2$ be the constants in the inf-sup condition
\eqref{ifc} and in Lemma \ref{tr}, respectively. If
$\hat{K}\gamma^2_2<\gamma_1$, then the PML variational problem
\eqref{cwp} has a unique weak solution $\boldsymbol{u}^{\rm PML}$, which
satisfies the error estimate
\begin{align}\label{ee}
 \|\boldsymbol{u}-\boldsymbol{u}^{\rm PML}\|_\Omega:=\sup_{0\neq
\boldsymbol{v}\in H^1_{\rm qp}(\Omega)^3}
\frac{|a(\boldsymbol{u}-\boldsymbol{u}^{\rm PML},
\boldsymbol{v})|}{\|\boldsymbol{v}\|_{H^1(\Omega)^3}}
\leq \gamma_2\hat{K} \|\boldsymbol{u}^{\rm PML}-\boldsymbol{u}^{\rm
inc}\|_{L^2(\Gamma_h)^3},
\end{align}
where $\boldsymbol{u}$ is the unique weak solution of the variational problem
\eqref{wp}.
\end{theo}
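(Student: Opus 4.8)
The plan is to treat the PML form $a^{\rm PML}$ defined in \eqref{csf} as a bounded perturbation of the form $a$ in \eqref{sf}. Since the two forms share exactly the same volume integrals, their difference is only the boundary term, namely
\[
a^{\rm PML}(\boldsymbol u,\boldsymbol v)-a(\boldsymbol u,\boldsymbol v)
=-\langle(\mathscr{T}^{\rm PML}-\mathscr{T})\boldsymbol u,\boldsymbol v\rangle_{\Gamma_h}.
\]
First I would quantify this perturbation in the energy norm by chaining Lemma \ref{boe} with the trace estimate of Lemma \ref{tr}, which gives
\[
|a^{\rm PML}(\boldsymbol u,\boldsymbol v)-a(\boldsymbol u,\boldsymbol v)|
\le \hat{K}\gamma_2^2\,\|\boldsymbol u\|_{H^1(\Omega)^3}\|\boldsymbol v\|_{H^1(\Omega)^3}.
\]

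For well-posedness, I would use the reverse triangle inequality $|a^{\rm PML}(\boldsymbol u,\boldsymbol v)|\ge|a(\boldsymbol u,\boldsymbol v)|-\hat{K}\gamma_2^2\|\boldsymbol u\|_{H^1(\Omega)^3}\|\boldsymbol v\|_{H^1(\Omega)^3}$, divide by $\|\boldsymbol v\|_{H^1(\Omega)^3}$, and take the supremum over $\boldsymbol v$. Because the subtracted quantity is independent of $\boldsymbol v$, the supremum passes through and the inf-sup condition \eqref{ifc} for $a$ yields
\[
\sup_{0\neq\boldsymbol v\in H^1_{\rm qp}(\Omega)^3}\frac{|a^{\rm PML}(\boldsymbol u,\boldsymbol v)|}{\|\boldsymbol v\|_{H^1(\Omega)^3}}\ge(\gamma_1-\hat{K}\gamma_2^2)\|\boldsymbol u\|_{H^1(\Omega)^3},
\]
with $\gamma_1-\hat{K}\gamma_2^2>0$ by hypothesis. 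Equivalently, denoting by $A$ and $A^{\rm PML}$ the operators associated with $a$ and $a^{\rm PML}$, the assumed bijectivity of $a$ together with \eqref{ifc} gives $\|A^{-1}\|\le\gamma_1^{-1}$, while the bound above shows $\|A^{\rm PML}-A\|\le\hat{K}\gamma_2^2<\gamma_1$; hence $A^{\rm PML}=A\bigl(I+A^{-1}(A^{\rm PML}-A)\bigr)$ is invertible by a Neumann series, giving existence and uniqueness of $\boldsymbol u^{\rm PML}$.

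For the error estimate I would derive a Galerkin-type identity linking the two solutions. On $\Gamma_h$ the stretching \eqref{cs} is the identity and $\rho$ is continuous with value one there, so the stretched incident field and its image under $\mathscr{D}$ coincide with $\boldsymbol u^{\rm inc}$ and $\mathscr{D}\boldsymbol u^{\rm inc}$ on $\Gamma_h$; consequently the common term $\mathscr{D}\boldsymbol u^{\rm inc}$ cancels and the data obey $\boldsymbol g-\boldsymbol g^{\rm PML}=(\mathscr{T}^{\rm PML}-\mathscr{T})\boldsymbol u^{\rm inc}$. Rewriting $a(\boldsymbol u^{\rm PML},\boldsymbol v)=a^{\rm PML}(\boldsymbol u^{\rm PML},\boldsymbol v)+\langle(\mathscr{T}^{\rm PML}-\mathscr{T})\boldsymbol u^{\rm PML},\boldsymbol v\rangle_{\Gamma_h}$, inserting $a^{\rm PML}(\boldsymbol u^{\rm PML},\boldsymbol v)=\langle\boldsymbol g^{\rm PML},\boldsymbol v\rangle_{\Gamma_h}$ from \eqref{cwp} and $a(\boldsymbol u,\boldsymbol v)=\langle\boldsymbol g,\boldsymbol v\rangle_{\Gamma_h}$ from \eqref{wp}, and subtracting, the terms collapse to
\[
a(\boldsymbol u-\boldsymbol u^{\rm PML},\boldsymbol v)=-\langle(\mathscr{T}^{\rm PML}-\mathscr{T})(\boldsymbol u^{\rm PML}-\boldsymbol u^{\rm inc}),\boldsymbol v\rangle_{\Gamma_h}.
\]
Applying Lemma \ref{boe} to the right-hand side, bounding $\|\boldsymbol v\|_{L^2(\Gamma_h)^3}\le\gamma_2\|\boldsymbol v\|_{H^1(\Omega)^3}$ by Lemma \ref{tr}, dividing by $\|\boldsymbol v\|_{H^1(\Omega)^3}$ and taking the supremum then delivers \eqref{ee} at once.

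The step I expect to be the main obstacle is the surjectivity half of the well-posedness claim: the perturbation bound immediately produces the a priori inf-sup estimate, hence stability and uniqueness, but turning this into existence requires either the operator/Neumann-series argument above leveraging the assumed bijectivity of $a$, or a Fredholm alternative based on the compactness of the trace map $H^1(\Omega)^3\to L^2(\Gamma_h)^3$. A secondary point needing care is the cancellation yielding $\boldsymbol g-\boldsymbol g^{\rm PML}=(\mathscr{T}^{\rm PML}-\mathscr{T})\boldsymbol u^{\rm inc}$, which rests on the identification of the stretched and physical incident fields on $\Gamma_h$; once this is in place, the remaining manipulations are routine.
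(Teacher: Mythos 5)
Your proposal is correct and follows essentially the same route as the paper: the perturbation bound from Lemmas \ref{boe} and \ref{tr} gives the inf-sup estimate for $a^{\rm PML}$ under $\hat K\gamma_2^2<\gamma_1$, and the error identity $a(\boldsymbol u-\boldsymbol u^{\rm PML},\boldsymbol v)=\langle(\mathscr T-\mathscr T^{\rm PML})(\boldsymbol u^{\rm PML}-\boldsymbol u^{\rm inc}),\boldsymbol v\rangle_{\Gamma_h}$ is exactly the paper's computation. Your Neumann-series justification of surjectivity is a welcome addition the paper glosses over, but it does not change the underlying argument.
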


\begin{proof}
It suffices to show the coercivity of the sesquilinear form $a^{\rm PML}$
defined in \eqref{csf} in order to prove the unique solvability of the weak
problem \eqref{cwp}. Using Lemmas \ref{boe}, \ref{tr} and the assumption
$\hat{K}\gamma^2_2<\gamma_1$, we get for any $\boldsymbol{u},
\boldsymbol{v}$ in $H^1_{\rm qp}(\Omega)^3$ that
\begin{align*}
 |a^{\rm PML}(\boldsymbol{u}, \boldsymbol{v})|&\geq |a(\boldsymbol{u},
\boldsymbol{v})|-\langle (\mathscr{T}^{\rm
PML}-\mathscr{T})\boldsymbol{u},
\boldsymbol{v}\rangle_{\Gamma_h}|\\
&\geq|a(\boldsymbol{u},
\boldsymbol{v})|-\hat{K}\gamma_2^2\|\boldsymbol{u}\|_{
H^1(\Omega)^3} \|\boldsymbol{v}\|_{H^1(\Omega)^3}\\
&\geq\bigl(\gamma_1-\hat{K}\gamma_2^2\bigr)\|\boldsymbol{u}\|_{
H^1(\Omega)^3}\|\boldsymbol{v}\|_{H^1(\Omega)^3}.
\end{align*}
It remains to show the error estimate \eqref{ee}. It follows from
\eqref{wp}--\eqref{sf} and \eqref{cwp}--\eqref{csf} that
\begin{align*}
 a(\boldsymbol{u}-\boldsymbol{u}^{\rm PML},
\boldsymbol{v})&=a(\boldsymbol{u}, \boldsymbol{v})-a(\boldsymbol{u}^{\rm PML},
\boldsymbol{v})\\
&=\langle\boldsymbol{f},
\boldsymbol{v}\rangle_{\Gamma_h}-\langle\boldsymbol{f}^{\rm PML},
\boldsymbol{v}\rangle_{\Gamma_h}+a^{\rm PML}(\boldsymbol{u}^{\rm PML},
\boldsymbol{v})-a(\boldsymbol{u}^{\rm PML}, \boldsymbol{v})\\
&=\langle (\mathscr{T}^{\rm PML}-\mathscr{T})\boldsymbol{u}^{\rm
inc}, \boldsymbol{v}\rangle_{\Gamma_h} -\langle (\mathscr{T}^{\rm
PML}-\mathscr{T})\boldsymbol{u}^{\rm PML},
\boldsymbol{v}\rangle_{\Gamma_h}\\
&=\langle (\mathscr{T}-\mathscr{T}^{\rm PML})(\boldsymbol{u}^{\rm
PML}-\boldsymbol{u}^{\rm inc}), \boldsymbol{v}\rangle_{\Gamma_h},
\end{align*}
which completes the proof upon using Lemmas \ref{boe} and \ref{tr}.
\end{proof}

We remark that the PML approximation error can be
reduced exponentially by either enlarging the thickness $\delta$ of the PML
layers or enlarging the medium parameters ${\rm Re}\sigma$ and ${\rm
Im}\sigma$.

\section{Numerical experiments}

In this section, we present two examples to demonstrate the numerical
performance of the PML solution. The first-order linear element is used for
solving the problem. Our implementation is based on parallel hierarchical
grid (PHG) \cite{phg}, which is a toolbox for developing parallel adaptive
finite element programs on unstructured tetrahedral meshes. The linear system
resulted from finite element discretization is solved by the Supernodal LU
(SuperLU) direct solver, which is a general purpose library for the direct
solution of large, sparse, nonsymmetric systems of linear equations.

{\em Example 1}. We consider the simplest periodic structure, a straight line,
where the exact solution is available. We assume that a plane compressional
plane wave $\boldsymbol u^{\rm inc}=\boldsymbol q e^{{\rm
i}(\boldsymbol\alpha\cdot \boldsymbol r-\beta x_3)}$ is incident on the
straight line $x_3=0$, where $\boldsymbol\alpha=(\alpha_1, \alpha_2)^\top,
\alpha_1=\kappa_1\sin\theta_1\cos\theta_2,
\alpha_2=\kappa_1\sin\theta_1\sin\theta_2, \beta=\kappa_1\cos\theta_1,
\boldsymbol q=(q_1, q_2, q_3), q_1=\sin\theta_1\cos\theta_2,
q_2=\sin\theta_1\sin\theta_2, q_3=-\cos\theta_1, \theta_1\in [0, \pi/2),
\theta_2\in[0, 2\pi]$ are incident angles. It follows from the Navier equation
and the Helmholtz decomposition that we obtain the exact solution:
\[
 \boldsymbol u(\boldsymbol x)=\boldsymbol u^{\rm inc}(\boldsymbol x)+{\rm i}
\begin{bmatrix}
                                               \alpha_1\\
                                               \alpha_2\\
                                               \beta
                                              \end{bmatrix}
  a e^{{\rm i}(\boldsymbol\alpha\cdot\boldsymbol r+\beta x_3)}+{\rm i}
\begin{bmatrix}
\alpha_2 b_3 -
\beta_2^{(0)}b_2\\
\beta_2^{(0)}b_1-\alpha_1 b_3\\
\alpha_1 b_2-\alpha_2 b_1
\end{bmatrix}
 e^{{\rm i}(\boldsymbol\alpha\cdot\boldsymbol r+\beta_2^{(0)}x_3)},
\]
where $(a, b_1, b_2, b_3)$ is the solution of the following linear system:
\[
 {\rm i}
 \begin{bmatrix}
 \alpha_{1} &                        0 & -\beta^{(0)}_{2} &
\alpha_{2}\\[5pt]
 \alpha_{2} & \beta^{(0)}_{2} &         0 & -\alpha_{1}\\[5pt]
 \beta &    -\alpha_{2} &  \alpha_{1} &
0\\[5pt]
 0 &          \alpha_{1} &  \alpha_{2} & \beta^{(0)}_{2}
 \end{bmatrix}
 \begin{bmatrix}
  a\\[5pt]
  b_1\\[5pt]
  b_2\\[5pt]
  b_3
 \end{bmatrix}=-
 \begin{bmatrix}
  q_1\\[5pt]
  q_2\\[5pt]
  q_3\\[5pt]
  0
 \end{bmatrix}.
 \]
 Solving the above equations via Cramer's rule gives
  \begin{align*}
a&=\frac{\rm i}{\chi}\big(\alpha_{1}   q_1
+ \alpha_{2}  q_2+ \beta^{(0)}_{2} q_3\big)\\
b_{1}  &=\frac{\rm i}{\chi}\big(
 \alpha_{1}\alpha_{2}(\beta-\beta^{(0)}_{2}) q_1 /\kappa^2_{2}
 +\big[(\alpha_{1})^2\beta^{(0)}_{2}+(\alpha_{2})^2\beta + \beta(\beta^{(0)}_{2})^2\big] q_2/\kappa^2_{2}
 -\alpha_{2} q_3\big) \\
b_{2}  &=\frac{\rm i}{\chi}\big(
 -\big[(\alpha_{1})^2\beta+(\alpha_{2})^2\beta^{(0)}_{2} + \beta(\beta^{(0)}_{2})^2\big] q_1/\kappa^2_{2}
  -\alpha_{1}\alpha_{2}(\beta-\beta^{(0)}_{2})q_2/\kappa^2_{2} +\alpha_{1} q_3\big) \\
b_{3}  &=\frac{\rm i}{\kappa^2_{2} }\big(
 \alpha_{2} q_1 -\alpha_{1} q_2\big),
 \end{align*}
 where
 \[
 \chi=(|\boldsymbol{\alpha}|^2+\beta\beta^{(0)}_{2}).
 \]
In our experiment, the parameters are chosen as $\lambda=1,
\mu=2, \theta_1=\theta_2=\pi/6, \omega=2\pi$. The computational domain
$\Omega=(0,1)\times(0,1)\times(0,0.6)$ and the PML domain is $\Omega^{\rm
PML}=(0, 1)\times (0, 1)\times (0.3, 0.6)$, i.e., the thickness of the PML layer
is 0.3. We choose $\sigma=25.39$ and $m=2$ for the medium property to ensure the
constant $K$ is so small that the PML error is negligible compared to the
finite element error. The mesh and surface plots of the amplitude of the field
$\boldsymbol{v}_h^{\rm PML}$ are shown in Figure \ref{ex1:mesh}. The mesh has
57600 tetrahedrons and the total number of degrees of freedom (DoFs) on the mesh
is 60000. The grating efficiencies are displayed in Figure \ref{ex1:eff}, which
verifies the conservation of the energy in Theorem \ref{coe}. Figure
\ref{ex1:err} shows the curves of $\log N_k$ versus $\log
\|\boldsymbol{u} -\boldsymbol{u}_k\|_{0, \Omega}$, i.e., $L^2$-error, and
$\log \|\nabla(\boldsymbol{u} -\boldsymbol{u}_k)\|_{0, \Omega}$,
i.e., $H^1$-error, where $N_k$ is the total number of DoFs of the mesh. It
indicates that the meshes and the associated numerical complexity are
quasi-optimal: $\log\|\boldsymbol{u} -\boldsymbol{u}_k\|_{0,
\Omega}=O(N^{-2/3}_k)$ and $\log \|\nabla(\boldsymbol{u}
-\boldsymbol{u}_k)\|_{0, \Omega}=O(N^{-1/3}_k)$ are valid asymptotically.

{\em Example 2}. This example concerns the scattering of the time-harmonic
compressional plane wave $\boldsymbol{u}^{\rm inc}$ on a flat grating surface
with two square bumps, as seen in Figure \ref{ex2:geo}. The parameters are
chosen as $\lambda=1, \mu=2, \theta_1=\theta_2=\pi/6$, $\omega=2\pi$. The
computational domain is $\Omega=(0,1)\times(0,1)\times(0,1)$ and the PML domain
is $\Omega^{\rm PML}=(0, 1)\times(0, 1)\times(0.5, 1.0)$, i.e., the thickness
of the PML layer is 0.5. Again, we choose $\sigma=28.57$ and $m=2$ for the
medium property to ensure that the PML error is negligible compared to the
finite element error. Since there is no analytical solution for this example, we
plot the grating efficiencies against the DoFs in Figure \ref{ex2:eff} to verify
the conservation of the energy. Figure \ref{ex2:mesh} shows the mesh and the
amplitude of the associated solution for the scattered field $\boldsymbol
v_h^{\rm PML}$ when the mesh has 49968 nodes.  

\begin{figure}
\centering
\includegraphics[width=0.38\textwidth]{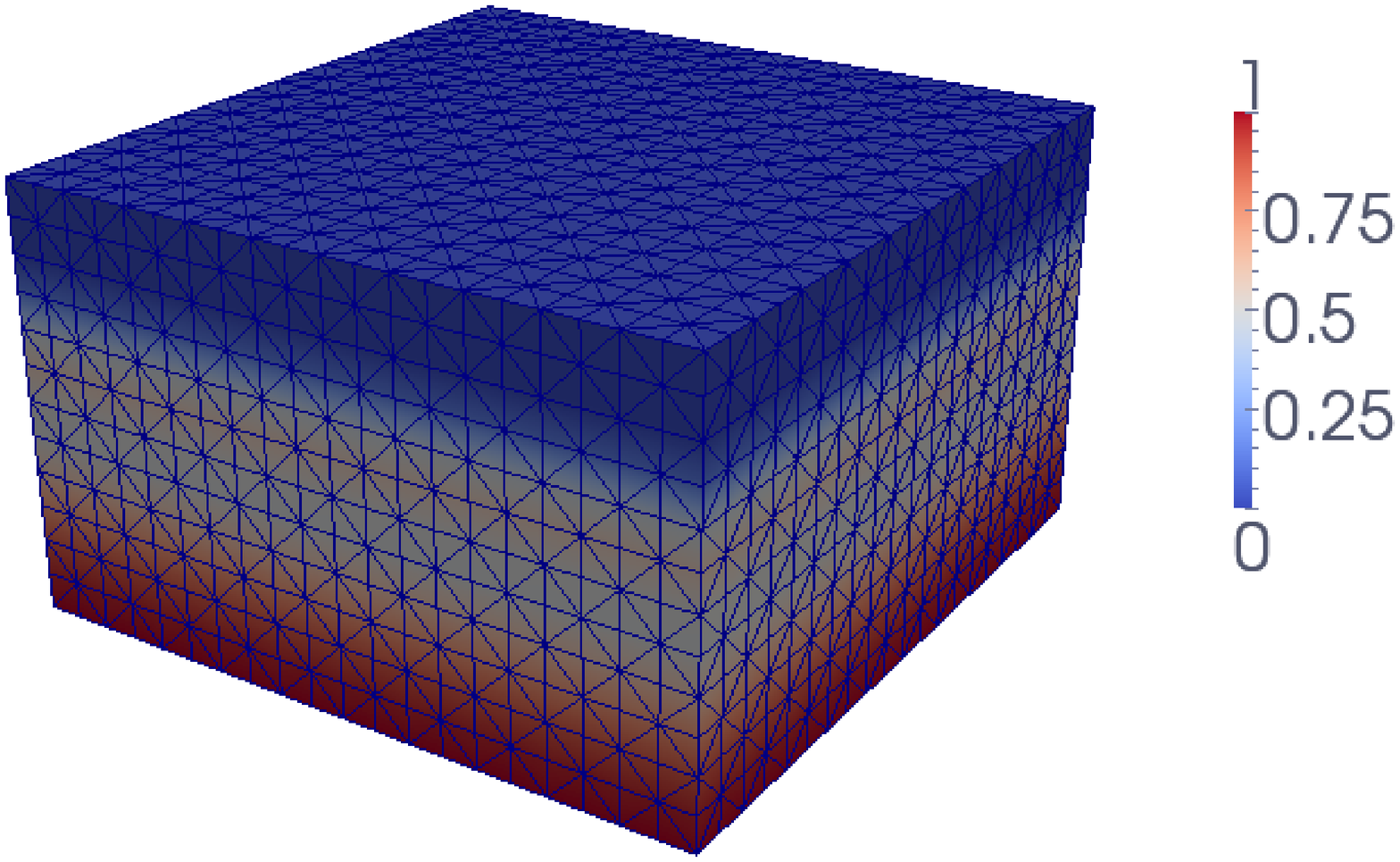}
\hskip 1.5cm
\includegraphics[width=0.38\textwidth]{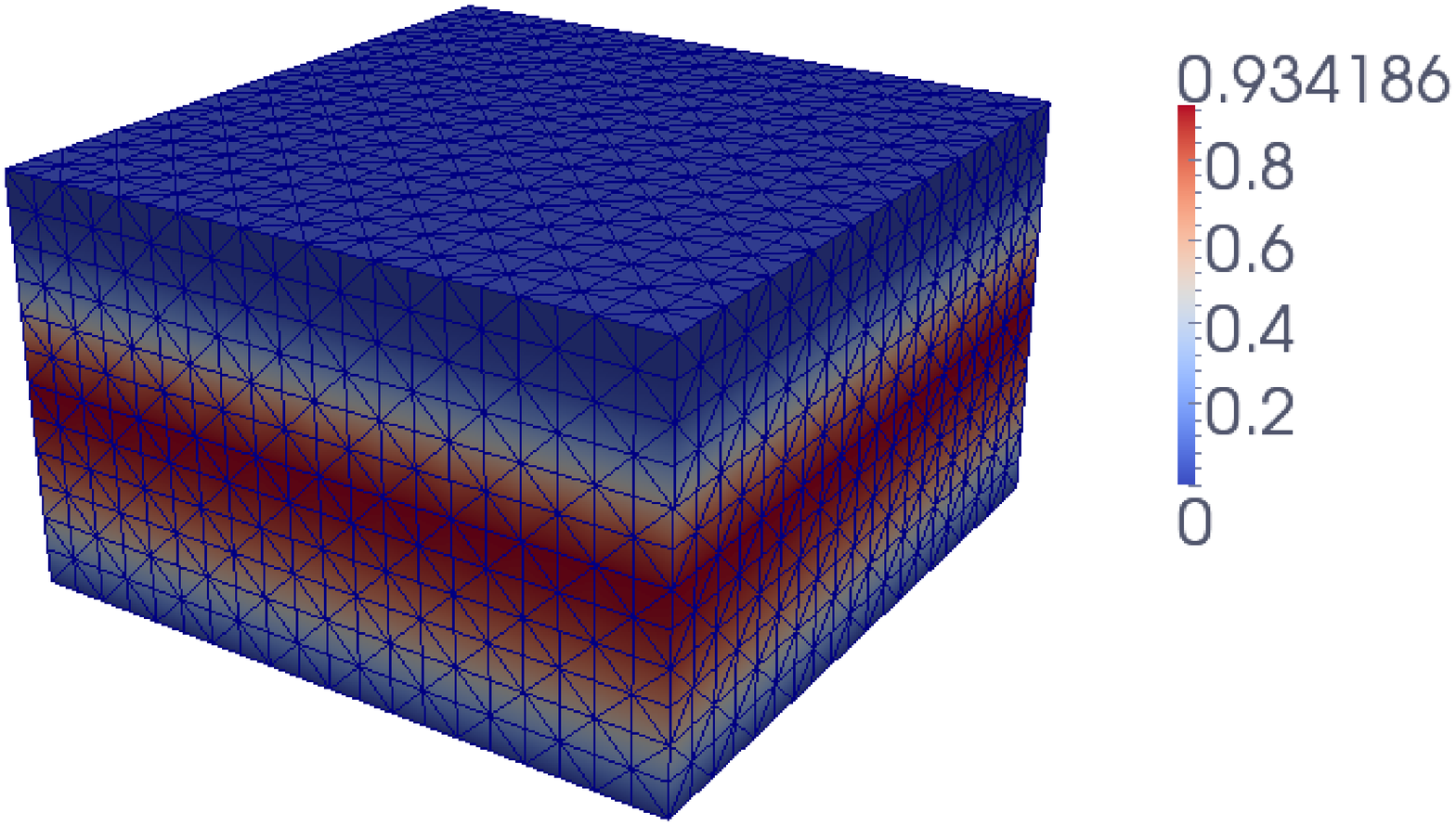}
\caption{The mesh and surface plots of the amplitude of the
associated solution for the scattered field $\boldsymbol v_h^{\rm
PML}$ for Example 1: (left) the amplitude of the real part of the solution
$|{\rm Re}\boldsymbol v_h^{\rm PML}|$; (right) the amplitude of the imaginary
part of the solution $|{\rm Im}\boldsymbol v_h^{\rm PML}|$.}\label{ex1:mesh}
\end{figure}

\begin{figure}
\centering
\includegraphics[width=0.4\textwidth]{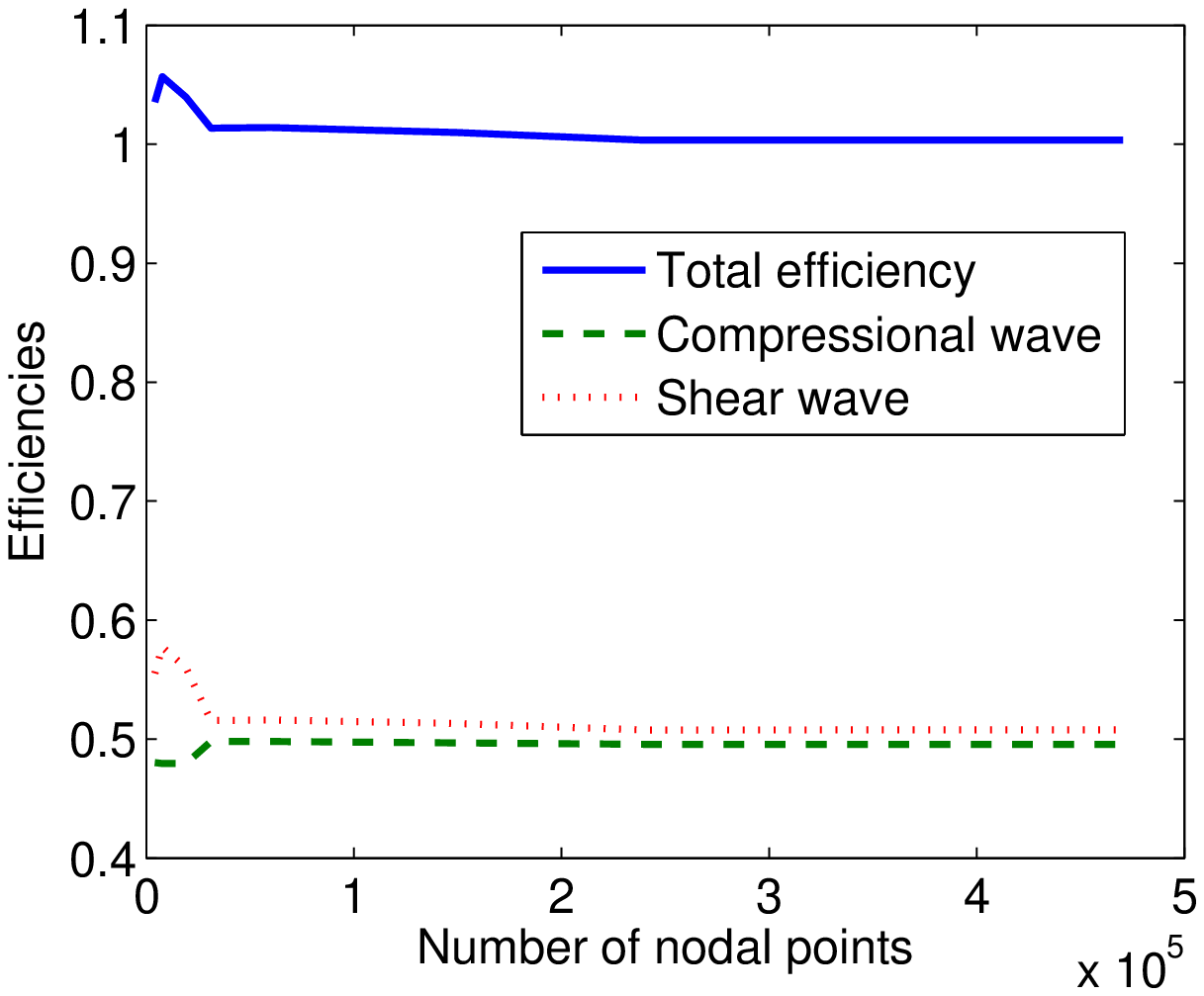}
\hskip 1.5cm
\includegraphics[width=0.4\textwidth]{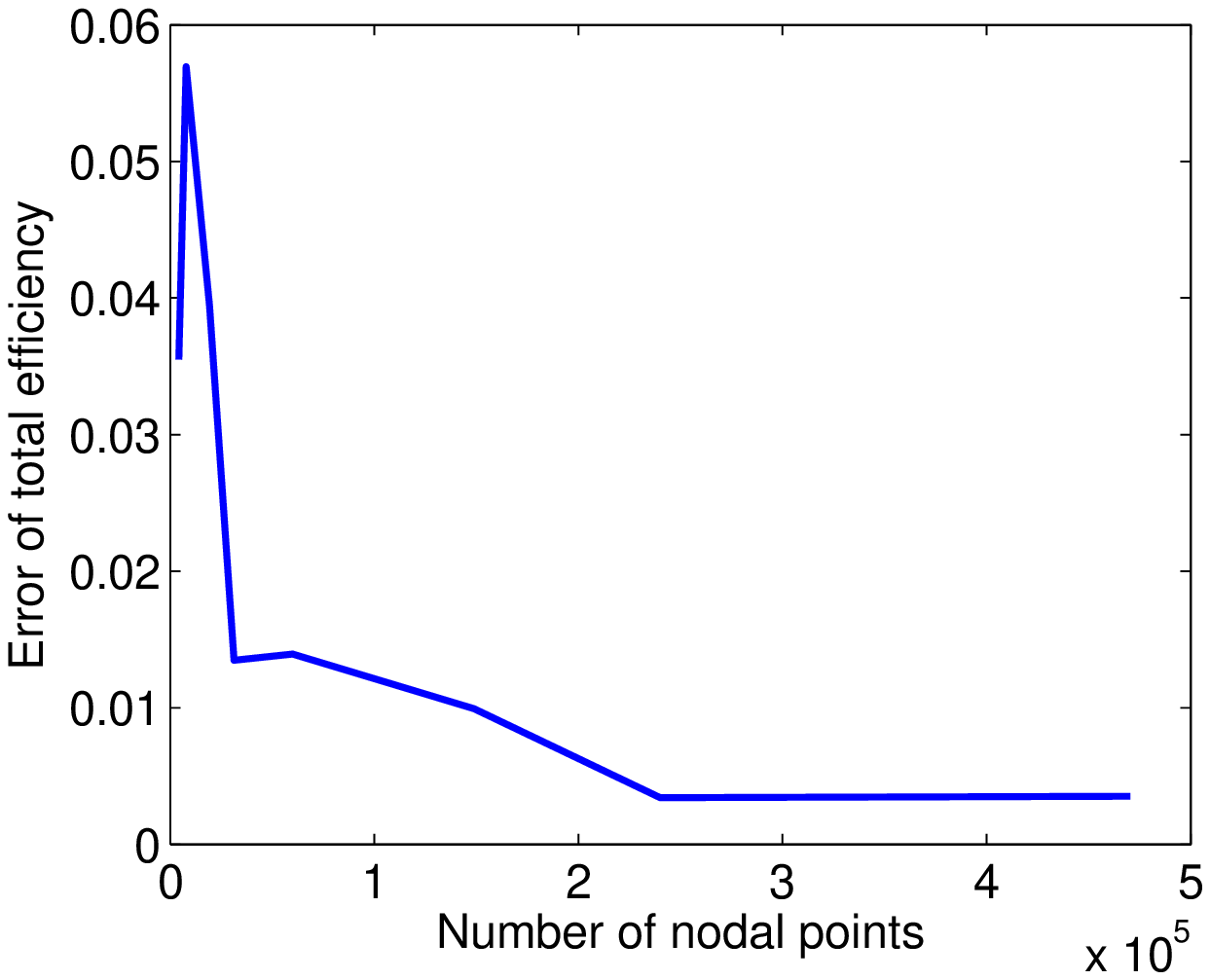}
\caption{Grating efficiencies and robustness of grating efficiency for Example
1.}\label{ex1:eff}
\end{figure}

\begin{figure}
\centering
\includegraphics[width=0.38\textwidth]{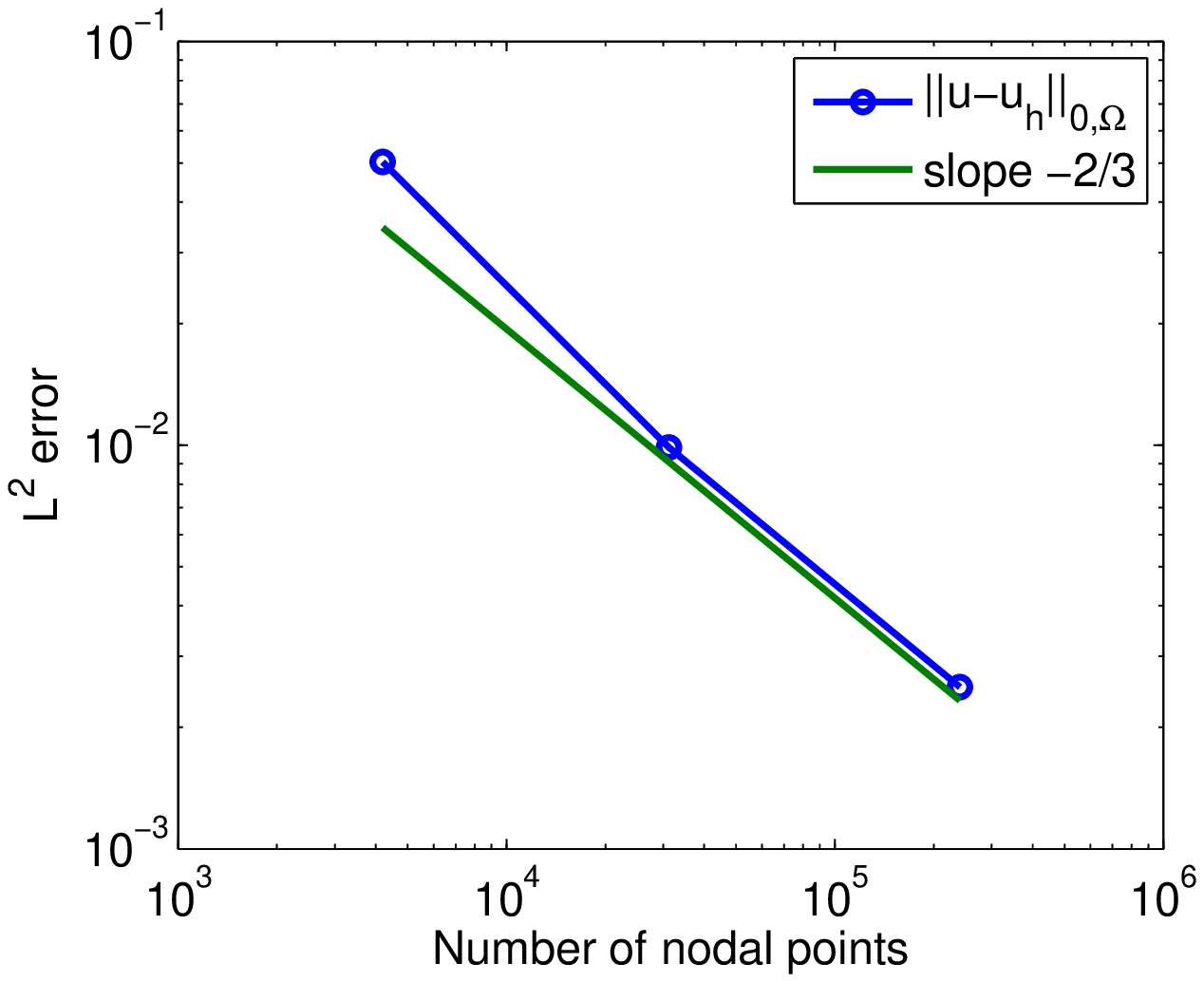}
\hskip 1.5cm
\includegraphics[width=0.38\textwidth]{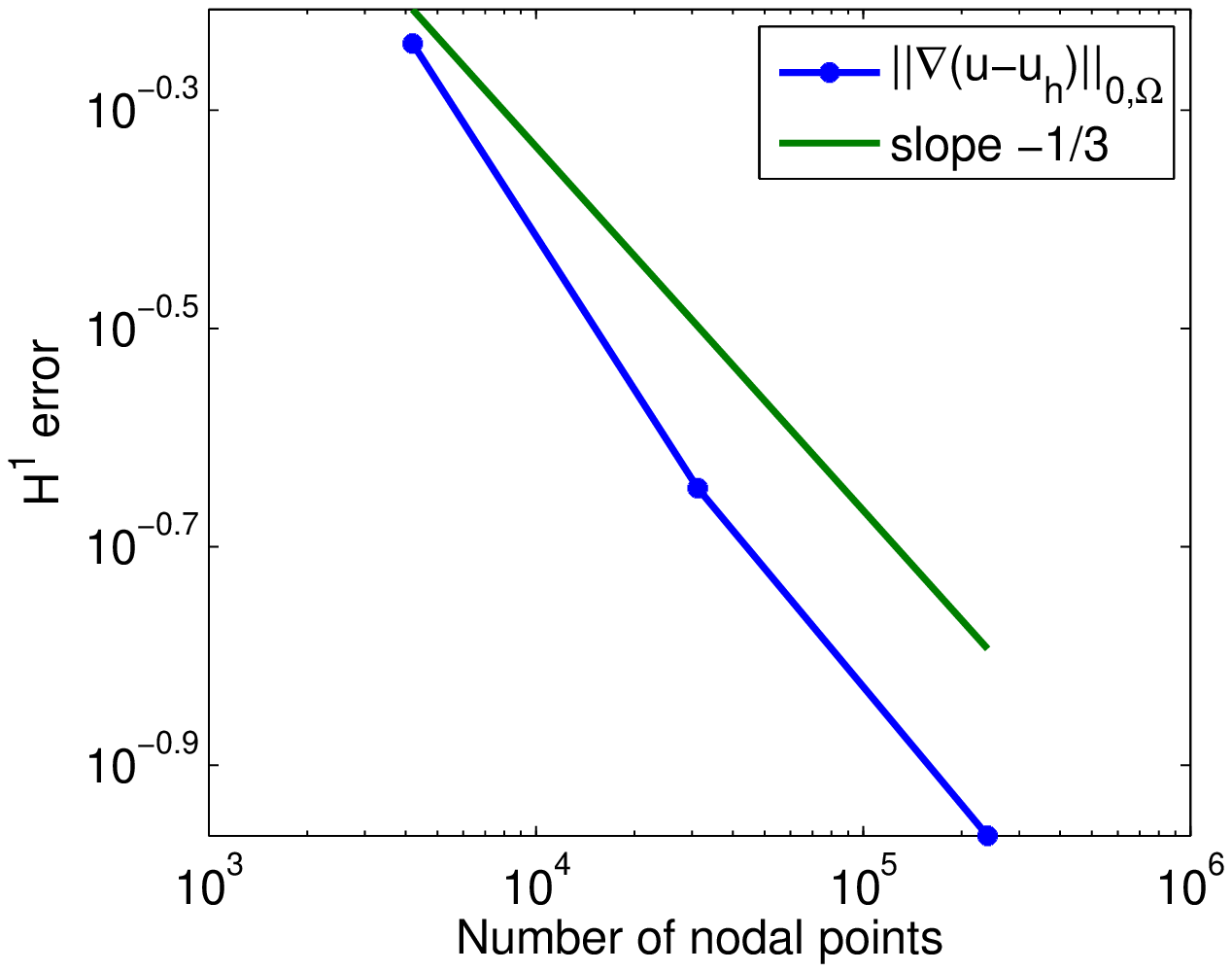}
\caption{Quasi-optimality of $L^2$- and $H^1$- error estimates for Example
1.}\label{ex1:err}
\end{figure}

\begin{figure}
\center
\includegraphics[width=0.5\textwidth]{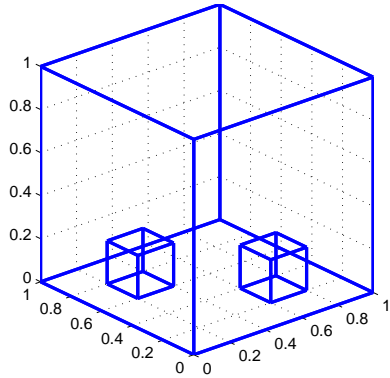}
\caption{Geometry of the domain for Example 2.}
\label{ex2:geo}
\end{figure}

\begin{figure}
\centering
\includegraphics[width=0.4\textwidth]{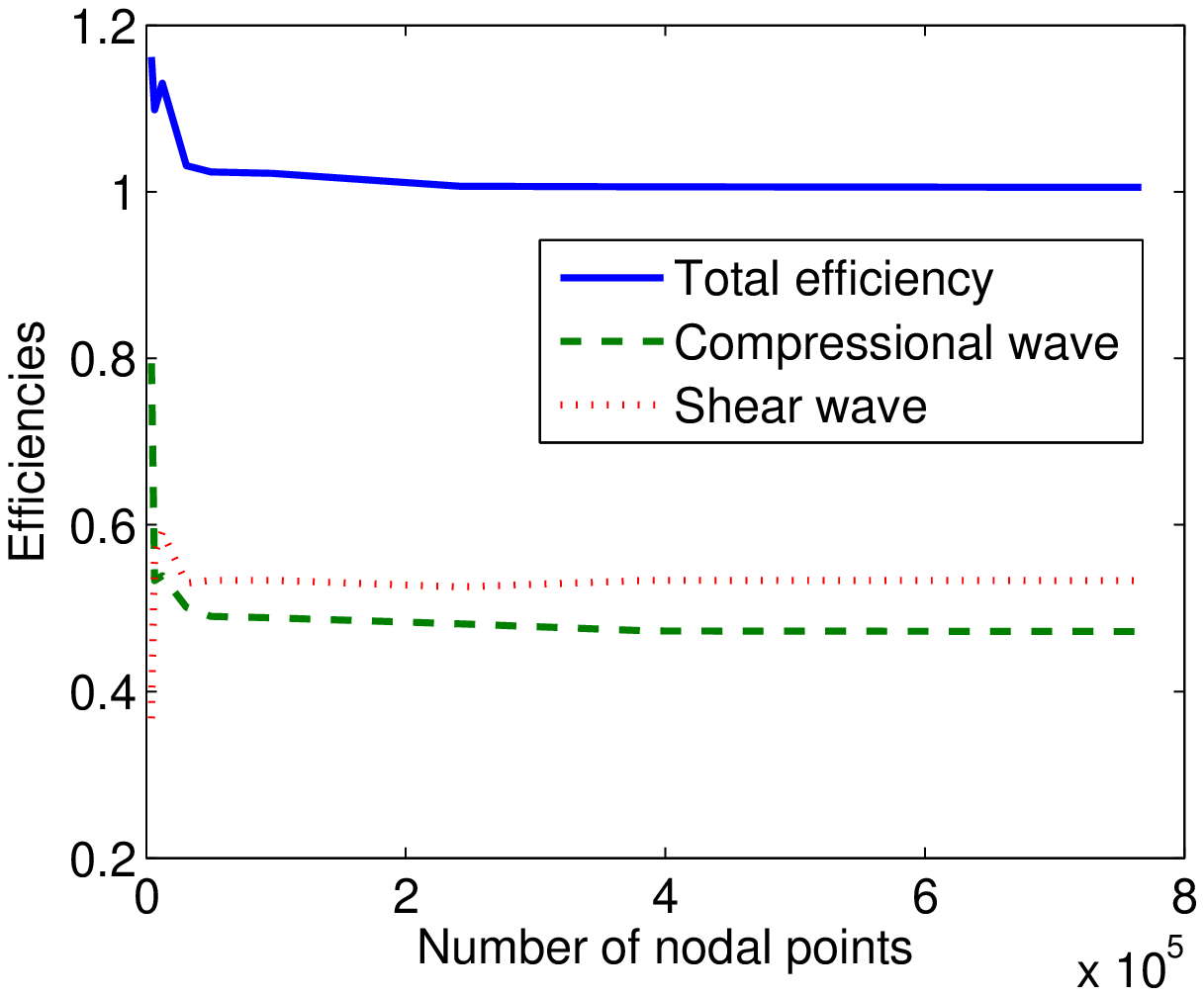}
\hskip 1.5cm
\includegraphics[width=0.4\textwidth]{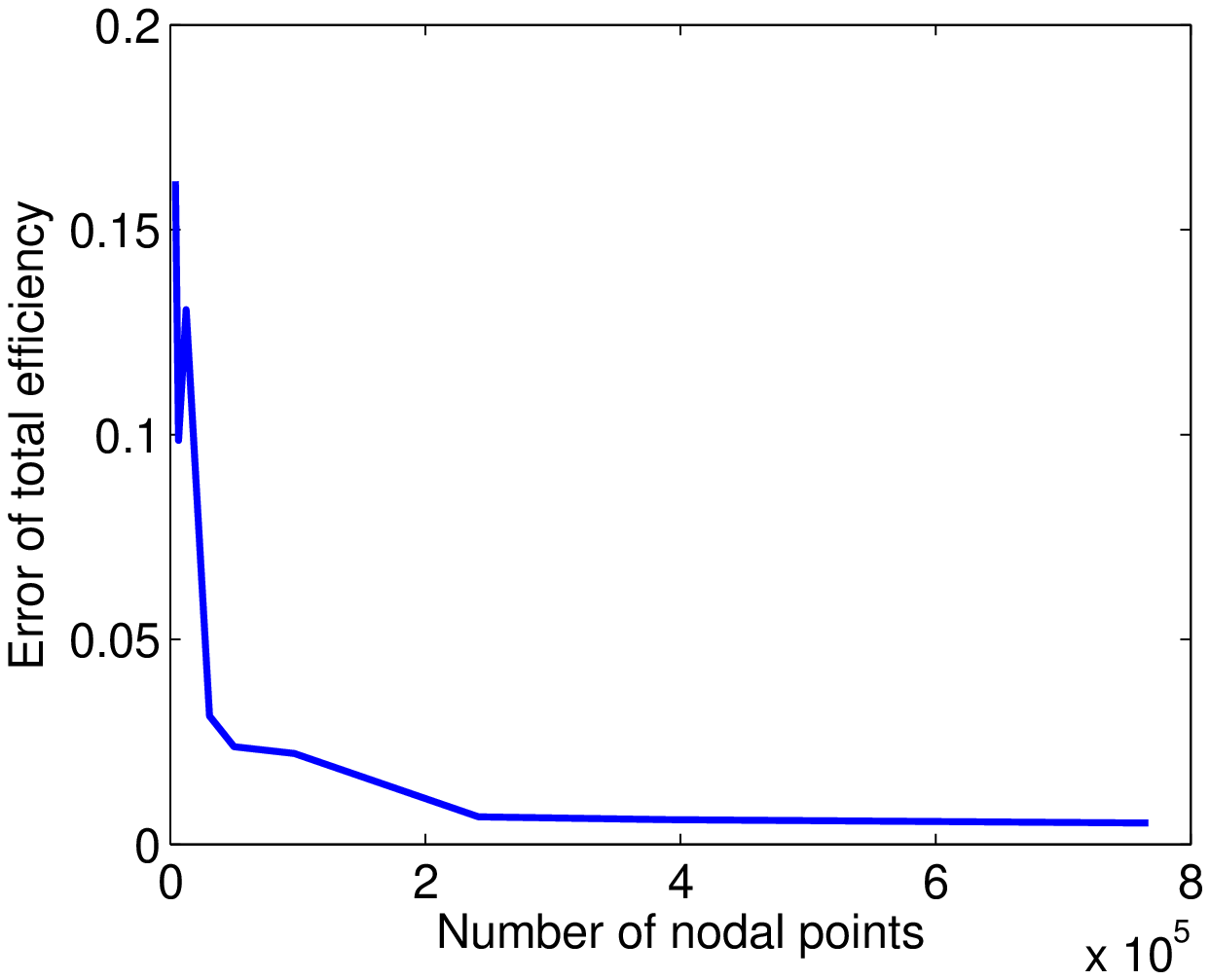}
\caption{Grating efficiencies and robustness of grating efficiency for Example
2.}\label{ex2:eff}
\end{figure}

\begin{figure}
\centering
\includegraphics[width=0.38\textwidth]{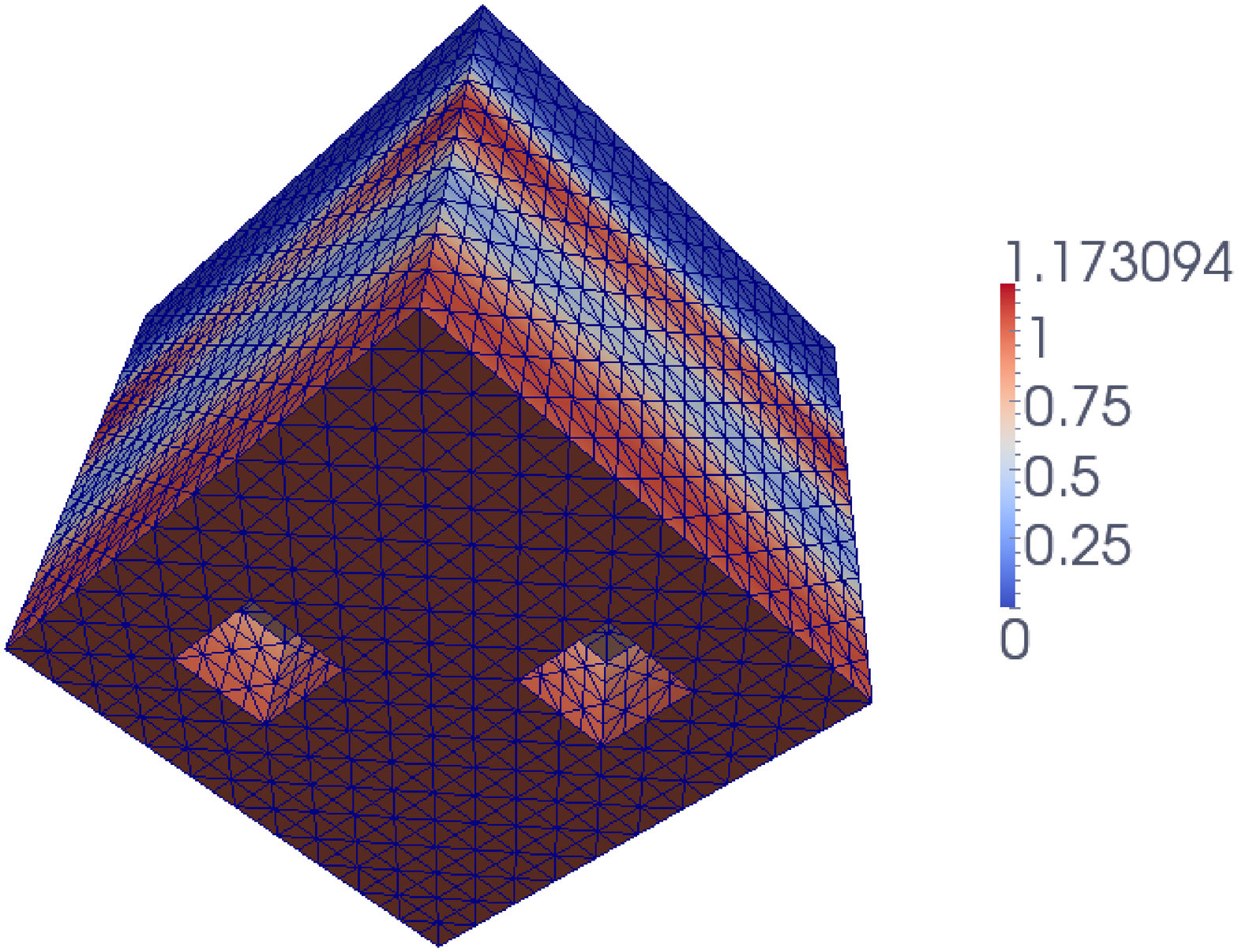}
\hskip 1.5cm
\includegraphics[width=0.38\textwidth]{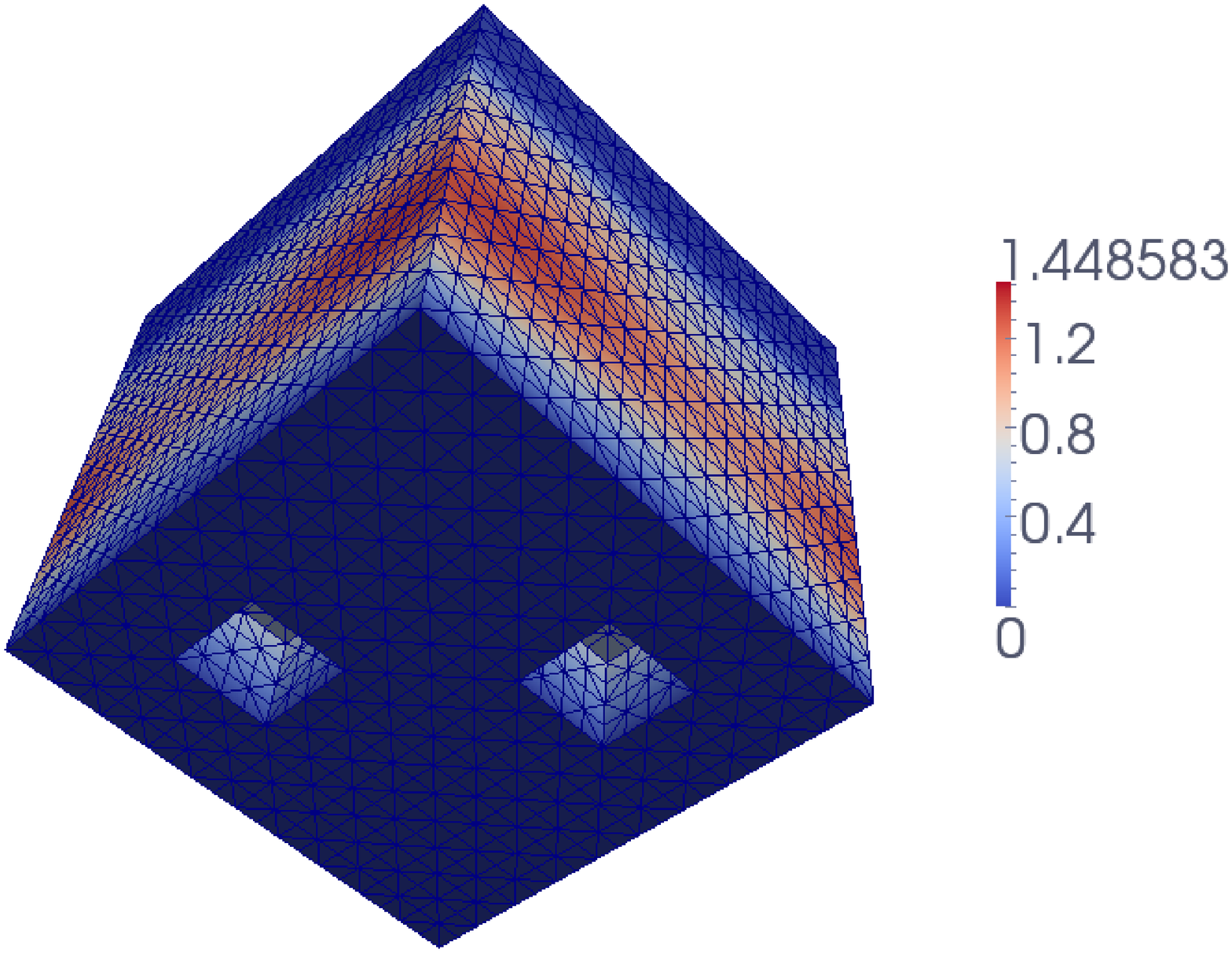}
\caption{The mesh and surface plots of the amplitude of the associated solution
for the scattered field $\boldsymbol v_h^{\rm PML}$ for Example 2:
(left) the amplitude of the real part of the solution $|{\rm
Re}\boldsymbol v_h^{\rm PML}|$; (right) the amplitude of the
imaginary part of the solution $|{\rm Im}\boldsymbol
v_h^{\rm PML}|$.}\label{ex2:mesh}
\end{figure}

\section{Concluding remarks}

We have studied a variational formulation for the elastic wave scattering
problem in a biperiodic structure and adopted the PML to truncate the
physical domain. The scattering problem is reduced to a boundary value problem
by using transparent boundary conditions. We prove that the truncated PML
problem has a unique weak solution which converges exponentially to the solution
of the original problem by increasing the PML paramers. Numerical results show
that the proposed method is effective to solve the scattering problem of elastic
waves in biperiodic structures. Although the paper presents the results for the
rigid boundary condition, the method is applicable to other boundary
conditions or the transmission problem where the structures are
penetrable. This work considers only the uniform mesh refinement. We plan to
incorporate the adaptive mesh refinement with a posteriori error estimate for
the finite element method to handle the problems where the solutions may have
singularities. The progress will be reported elsewhere in a future work. 

\appendix

\section{Technical estimates}

In this section, we present the proofs for some technical estimates which are
used in our analysis for the error estimate between the solutions of the PML
problem and the original scattering problem.

\begin{prop}\label{chie}
For any $n\in\mathbb{Z}^2$, we have
$\kappa_{1}^2<|\chi^{(n)}|<\kappa_{2}^2$.
\end{prop}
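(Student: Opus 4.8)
The plan is to reduce the statement to a one-variable analysis by setting $t=|\boldsymbol{\alpha}^{(n)}|^2\ge 0$ and studying $\chi^{(n)}=t+\beta_1^{(n)}\beta_2^{(n)}$ as a function of $t$, over all admissible values (everything except $t=\kappa_1^2$ and $t=\kappa_2^2$, which are ruled out by the no-resonance assumption $\kappa_j\neq|\boldsymbol{\alpha}^{(n)}|$). The first step, and the fact that drives every inequality, is to record the ordering $\kappa_1<\kappa_2$: since $\mu>0$ and $\lambda+\mu>0$ we have $\lambda+2\mu>\mu>0$, whence $\kappa_1^2=\omega^2/(\lambda+2\mu)<\omega^2/\mu=\kappa_2^2$. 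I would then split into the three regimes dictated by the definition \eqref{beta} of $\beta_j^{(n)}$ and analyze $\chi^{(n)}$ from \eqref{chi} in each.

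In the first regime $t<\kappa_1^2$, both $\beta_1^{(n)},\beta_2^{(n)}$ are real and positive, so $\chi^{(n)}=t+\sqrt{(\kappa_1^2-t)(\kappa_2^2-t)}$ is real and positive. I would establish the two bounds by the factorizations $\chi^{(n)}-\kappa_1^2=\sqrt{\kappa_1^2-t}\,\bigl(\sqrt{\kappa_2^2-t}-\sqrt{\kappa_1^2-t}\bigr)$ and $\chi^{(n)}-\kappa_2^2=\sqrt{\kappa_2^2-t}\,\bigl(\sqrt{\kappa_1^2-t}-\sqrt{\kappa_2^2-t}\bigr)$; the signs are then immediate from $\kappa_1<\kappa_2$ and monotonicity of the square root, giving $\kappa_1^2<\chi^{(n)}<\kappa_2^2$. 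In the third regime $t>\kappa_2^2$, both $\beta_j^{(n)}$ are purely imaginary, so $\beta_1^{(n)}\beta_2^{(n)}=-\sqrt{(t-\kappa_1^2)(t-\kappa_2^2)}$ is real and negative and $\chi^{(n)}=t-\sqrt{(t-\kappa_1^2)(t-\kappa_2^2)}$. The same trick applies: $\chi^{(n)}-\kappa_1^2=\sqrt{t-\kappa_1^2}\,\bigl(\sqrt{t-\kappa_1^2}-\sqrt{t-\kappa_2^2}\bigr)>0$ and $\chi^{(n)}-\kappa_2^2=\sqrt{t-\kappa_2^2}\,\bigl(\sqrt{t-\kappa_2^2}-\sqrt{t-\kappa_1^2}\bigr)<0$.

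The intermediate regime $\kappa_1^2<t<\kappa_2^2$ is the one where $\chi^{(n)}$ is genuinely complex: here $\beta_1^{(n)}$ is imaginary while $\beta_2^{(n)}$ is real, so $\beta_1^{(n)}\beta_2^{(n)}$ is purely imaginary and $\chi^{(n)}=t+\mathrm{i}\sqrt{(t-\kappa_1^2)(\kappa_2^2-t)}$. The pleasant simplification I would exploit is that the modulus squared collapses to an affine function of $t$, namely $|\chi^{(n)}|^2=t^2+(t-\kappa_1^2)(\kappa_2^2-t)=(\kappa_1^2+\kappa_2^2)t-\kappa_1^2\kappa_2^2$, which is strictly increasing and equals $\kappa_1^4$ at $t=\kappa_1^2$ and $\kappa_2^4$ at $t=\kappa_2^2$. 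Since those two endpoints are excluded, $\kappa_1^4<|\chi^{(n)}|^2<\kappa_2^4$, i.e. $\kappa_1^2<|\chi^{(n)}|<\kappa_2^2$.

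There is no serious obstacle here beyond the bookkeeping of the three cases: in each regime the estimate is a single square-root factorization once $\kappa_1<\kappa_2$ is known, and the strictness is exactly what the exclusion of the resonant values $t=\kappa_1^2,\kappa_2^2$ supplies. The only point meriting care is to verify that $\chi^{(n)}>0$ throughout (so that $|\chi^{(n)}|=\chi^{(n)}$ in the two real cases), which follows in the third regime from $(t-\kappa_1^2)(t-\kappa_2^2)<t^2$; the affine identity in the middle regime is the step I would highlight as the cleanest part of the argument.
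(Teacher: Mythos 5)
Your proof is correct and follows essentially the same route as the paper: the same three-way case split on the position of $|\boldsymbol{\alpha}^{(n)}|$ relative to $\kappa_1,\kappa_2$, with the identical affine computation $|\chi^{(n)}|^2=(\kappa_1^2+\kappa_2^2)t-\kappa_1^2\kappa_2^2$ in the mixed regime. The only difference is cosmetic: in the two real regimes you use direct square-root factorizations of $\chi^{(n)}-\kappa_j^2$ where the paper invokes monotonicity of the auxiliary functions $g_1,g_2$ and evaluates at the endpoints; your version is a touch cleaner since it avoids an unproved monotonicity claim, at the cost of the slightly sharper (but unneeded) bounds $\chi^{(n)}<\kappa_1\kappa_2$ and $\chi^{(n)}>(\kappa_1^2+\kappa_2^2)/2$ that the paper obtains.
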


\begin{proof}
Recalling \eqref{chi} and \eqref{beta}, we consider three cases:

\begin{enumerate}

\item[(i)] For $n\in U_{1}$, $\beta_{1}^{(n)}=(\kappa_{1}^2-|\boldsymbol{\alpha}^{(n)}|^2)^{1/2}$
and $\beta_{2}^{(n)}=(\kappa_{2}^2-|\boldsymbol{\alpha}^{(n)}|^2)^{1/2}$.
We have
\[
\chi^{(n)}=|\boldsymbol{\alpha}^{(n)}|^2+\beta_{1}^{(n)}\beta_{2}^{(n)}
=|\boldsymbol{\alpha}^{(n)}|^2+(\kappa_{1}^2-|\boldsymbol{\alpha}^{(n)}|^2)^{1/2
}(\kappa_{2}^2-|\boldsymbol{\alpha}^{(n)}|^2)^{1/2}.
\]
Consider the function
\[
g_1(t)=t+(k_1-t)^{1/2}(k_2-t)^{1/2}, \quad 0<k_1<k_2.
\]
It is easy to know that $g_1$ is decreasing for $0<t<k_1$. Hence
\[
k_1=g_1(k_1)<g_1(t)<g_1(0)=(k_1 k_2)^{1/2},
\]
which gives $\kappa_{1}^2<\chi^{(n)}<\kappa_{1}\kappa_{2}$.

\item[(ii)] For $n\in U_{2}\setminus U_{1}$, $\beta_{1}^{(n)} ={\rm
i}(|\boldsymbol{\alpha}^{(n)}|^2-\kappa_{1}^2)^{1/2}, \beta_{2}^{(n)}
=(\kappa_{2}^2-|\boldsymbol{\alpha}^{(n)}|^2)^{1/2}$.
We have
\[
\chi^{(n)}=|\boldsymbol{\alpha}^{(n)}|^2+{\rm
i}(|\boldsymbol{\alpha}^{(n)}|^2-\kappa_{1}^2)^{1/2}
(\kappa_{2}^2-|\boldsymbol{\alpha}^{(n)}|^2)^{1/2}
\]
and
\[
|\chi^{(n)}|^2=(\kappa_{1}^2+\kappa_{2}^2)|\boldsymbol{\alpha}^{(n)}|^2-(\kappa_{1}\kappa_{2})^2,
\]
which gives $\kappa_{1}^2<|\chi^{(n)}|<\kappa_{2}^2$.

\item[(iii)] For $n\notin U_{2}$,
$\beta_{1}^{(n)}={\rm
i}(|\boldsymbol{\alpha}^{(n)}|^2-\kappa_{1}^2)^{1/2},
 \beta_{2}^{(n)}={\rm
i}(|\boldsymbol{\alpha}^{(n)}|^2-\kappa_{2}^2)^{1/2}$.
We have
\[
\chi^{(n)}=|\boldsymbol{\alpha}^{(n)}|^2
-(|\boldsymbol{\alpha}^{(n)}|^2-\kappa_{1}^2)^{1/2}(|\boldsymbol{\alpha}^{(n)}|^2-\kappa_{2}^2)^{1/2}.
\]
Let
\[
g_2(t)=t-(t-k_1)^{1/2}(t-k_2)^{1/2}, \quad 0<k_1<k_2.
\]
It is easy to verify that the function $g_2$ is decreasing for $t>k_2$. Hence we
have
\[
(k_1+k_2)/2=g_2(\infty)<g_2(t)<g_2(k_2)=k_2,
\]
which gives
$(\kappa_{1}^2+\kappa_{2}^2)/2<\chi^{(n)}<\kappa_{2}^2$.
\end{enumerate}
Combining the above estimates, we get $\kappa_{1}^2<|\chi^{(n)}|<
\kappa_{2}^2$ for any $n\in\mathbb{Z}^2$.
\end{proof}

\begin{prop}\label{eg3}
The function $g_3(t)=t^k/ e^{(t^2-s^2)^{1/2}}$ satisfies $g_5(t)\leq
(s^2+k^2)^{k/2}$ for any $t>s>0, ~ k\in \mathbb{R}^1$.
\end{prop}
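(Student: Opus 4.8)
The plan is to treat $g_3$ as a function of the single variable $t$ and to bound its supremum over $(s,\infty)$ directly, rather than computing the maximizer in closed form. The exponent $(t^2-s^2)^{1/2}$ invites the substitution $w=(t^2-s^2)^{1/2}\ge 0$, so that $t^2=s^2+w^2$ and $t^k=(s^2+w^2)^{k/2}$, under which the claim $g_3(t)\le(s^2+k^2)^{k/2}$ becomes
\[
(s^2+w^2)^{k/2}\le (s^2+k^2)^{k/2}\,e^{w},\qquad w\ge 0.
\]
First I would dispose of the degenerate case $k=0$, where $g_3(t)=e^{-w}\le 1=(s^2)^{0}$ holds trivially. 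For $k>0$ the map $x\mapsto x^{2/k}$ is increasing on $(0,\infty)$, so raising both sides to the power $2/k$ reduces the proposition to the scalar inequality
\[
s^2+w^2\le (s^2+k^2)\,e^{2w/k},\qquad w\ge 0.
\]

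The key step is to lower bound the exponential by its second-order Taylor polynomial: for $x\ge 0$ one has $e^{x}\ge 1+x+\tfrac12 x^2$. Taking $x=2w/k\ge 0$ gives
\[
(s^2+k^2)\,e^{2w/k}\ge (s^2+k^2)\Bigl(1+\frac{2w}{k}+\frac{2w^2}{k^2}\Bigr).
\]
Expanding the right-hand side and subtracting $s^2+w^2$, I expect every surviving term to be manifestly nonnegative for $s,k>0$ and $w\ge 0$, namely
\[
k^2+\frac{2ws^2}{k}+2wk+\frac{2w^2s^2}{k^2}+w^2\ge 0,
\]
which establishes the reduced inequality and hence the proposition for $k>0$. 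The one place where care is needed, and what I regard as the main (if mild) obstacle, is the choice of truncation order: using only $e^{x}\ge 1+x$ fails, because the leftover $-w^2$ term is then uncontrolled as $w\to\infty$; the quadratic Taylor term is exactly what absorbs it. This also explains why a clean uniform bound is available without any case analysis in $k$ versus $s$.

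An alternative route is the direct optimization $(\ln g_3)'=\tfrac{k}{t}-\tfrac{t}{(t^2-s^2)^{1/2}}=0$, which yields $k(t^2-s^2)^{1/2}=t^2$; since $(t^2-s^2)^{1/2}<t$, any interior critical point must satisfy $t<k$, so $g_3\le t^k<k^k=(k^2)^{k/2}\le(s^2+k^2)^{k/2}$, while if no interior maximizer exists the supremum is the boundary value $\lim_{t\to s^+}g_3=s^k\le(s^2+k^2)^{k/2}$. This is equally short but requires a small case distinction on the existence of critical points, which the Taylor comparison sidesteps; I would therefore present the substitution argument as the primary proof. The only genuine caveat concerns the hypothesis on $k$: the stated range $k\in\mathbb{R}$ should be read as $k\ge 0$, since for $k<0$ the supremum as $t\to s^+$ equals $s^k>(s^2+k^2)^{k/2}$ and the bound fails; in the intended application $k$ is a nonnegative integer power, so this restriction is harmless.
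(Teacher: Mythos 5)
Your proof is correct, but it takes a genuinely different route from the paper's. The paper makes the same substitution $\tau=(t^2-s^2)^{1/2}$ and then optimizes $\hat g_3(\tau)=(\tau^2+s^2)^{k/2}e^{-\tau}$ explicitly: it computes $\hat g_3'(\tau)=-(\tau^2-k\tau+s^2)(\tau^2+s^2)^{k/2-1}e^{-\tau}$ and splits into the cases $s\geq k/2$ (the quadratic has no positive root, $\hat g_3$ is decreasing, maximum $s^k$ at $\tau=0$) and $s<k/2$ (an interior maximizer $\tau_2=(k+(k^2-4s^2)^{1/2})/2\leq k$, at which $\hat g_3(\tau_2)\leq(\tau_2^2+s^2)^{k/2}\leq(s^2+k^2)^{k/2}$). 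Your "alternative route" at the end is essentially this argument in the original variable; your primary argument --- reducing, for $k>0$, to $s^2+w^2\leq(s^2+k^2)e^{2w/k}$ and invoking $e^x\geq 1+x+\tfrac12x^2$ --- is a cleaner, calculus-free comparison that avoids the case split entirely, and your observation that the first-order Taylor bound would not suffice is exactly right. You are also correct that the hypothesis $k\in\mathbb{R}$ is too generous: both your proof and the paper's implicitly need $k\geq 0$ (for $k<0$ one has $\sup_{t>s}g_3=s^k>(s^2+k^2)^{k/2}$, so the stated bound fails near $t=s$), and in the paper's application $k$ is always a positive integer, so the restriction is indeed harmless. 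The only thing your write-up does not replicate is the paper's sharper intermediate conclusion $g_3\leq\max\{s^k,(s^2+k^2)^{k/2}e^{-\tau_2}\}$, but nothing downstream uses more than the stated bound.
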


\begin{proof}
Using the change of variables $\tau=(t^2-s^2)^{1/2}$, we have
\[
 \hat{g}_3(\tau)=\frac{(\tau^2+s^2)^{k/2}}{e^{\tau}}.
\]
Taking the derivative of $\hat{g}_4$ gives
\[
 \hat{g}'_3(\tau)=-\frac{(\tau^2-k\tau+s^2)(\tau^2+s^2)^{\frac{k}{2}-1}}{e^{\tau}}.
\]
\begin{enumerate}

\item[(i)] If $s\geq k/2$, then $\hat{g}_3'\leq 0$ for $\tau>0$. The
function $\hat{g}_3$ is decreasing and reaches its maximum at $\tau=0$, i.e.,
\[
 g_3(t)\leq\hat{g}_3(0)=s^k.
\]

\item[(ii)] If $s<k/2$, then
$\hat{g}'_3<0$ for $\tau\in(0, (k-(k^2-4s^2)^{1/2})/2)\cup
((k+(k^2-4s^2)^{1/2})/2, \infty)$
and $\hat{g}_3>0$ for $\tau\in ((k-(k^2-4s^2)^{1/2})/2,
(k+(k^2-4s^2)^{1/2})/2)$. Thus
$\hat{g}_3$ reaches its maximum at either $\tau_1=0$ or
$\tau_2=(k+(k^2-4s^2)^{1/2})/2$. Thus we have
\[
g_3(t)=\hat{g}_3(\tau)\leq \max\{\hat{g}_3(\tau_1),\, \hat{g}_3(\tau_2)\}\leq
(s^2+k^2)^{k/2}.
\]
\end{enumerate}
The proof is completed by combining the above estimates.
\end{proof}

\begin{prop}\label{mfe}
For any $n\in\mathbb{Z}^2$, we have $\|M^{(n)}-\hat{M}^{(n)}\|_F\leq
\hat{K}$, where $\hat{K}=11\mu K/\kappa_{1}^4$.
\end{prop}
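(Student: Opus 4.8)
The plan is to establish the bound entry by entry and then combine via the definition of the Frobenius norm. Comparing the matrix $M^{(n)}$ defined after \eqref{bv} with the entries $\hat m_{ij}^{(n)}$ of the PML matrix, one sees that each $\hat m_{ij}^{(n)}$ is obtained from the corresponding entry of $M^{(n)}$ by replacing the prefactor $1/\chi^{(n)}$ with $1/(\chi^{(n)}\hat\chi^{(n)})$, inserting a factor $\varepsilon^{(n)}+1$, and adding correction terms proportional to $\theta^{(n)}$, $\eta^{(n)}$, or $\gamma^{(n)}$. Using the identity $\hat\chi^{(n)}-\chi^{(n)}=4|\boldsymbol\alpha^{(n)}|^2\beta_1^{(n)}\beta_2^{(n)}\theta^{(n)}/\chi^{(n)}$, I would rewrite each difference $M^{(n)}_{ij}-\hat m^{(n)}_{ij}$ as a linear combination of the four quantities $\varepsilon^{(n)},\theta^{(n)},\eta^{(n)},\gamma^{(n)}$ with coefficients that are products of $\mu$, powers of $\alpha_k^{(n)}$ and $\beta_k^{(n)}$, the constant $\kappa_2^2$, and the negative powers $1/\chi^{(n)}$ and $1/\hat\chi^{(n)}$. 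In this way the whole estimate reduces to (a) bounding the four small quantities and (b) bounding the algebraic prefactors uniformly in $n$.

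For step (a), I would bound $|\varepsilon^{(n)}|$, $|\theta^{(n)}|$, $|\eta^{(n)}|$, and $|\gamma^{(n)}|$ by the corresponding terms inside the definition of $K$. The key is to treat the exponential factors $e^{{\rm i}\beta_j^{(n)}\zeta}$ separately in the two regimes: for $n\in U_j$ one has $\beta_j^{(n)}=\Delta_j^{(n)}$ real, so $|e^{{\rm i}\beta_j^{(n)}\zeta}|=e^{-\Delta_j^{(n)}{\rm Im}\,\zeta}$, while for $n\notin U_j$ one has $\beta_j^{(n)}={\rm i}\Delta_j^{(n)}$, so $|e^{{\rm i}\beta_j^{(n)}\zeta}|=e^{-\Delta_j^{(n)}{\rm Re}\,\zeta}$. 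Bounding the denominators by the elementary inequality $|1-e^{2{\rm i}\beta_j^{(n)}\zeta}|^{-1}\le(1-e^{-2\Delta_j^{(n)}{\rm Re}\,\zeta})^{-1}$ and its analogue with ${\rm Im}\,\zeta$, together with the monotonicity $\Delta_j^{(n)}\ge\Delta_j^-$ for $n\in U_j$ and $\Delta_j^{(n)}\ge\Delta_j^+$ for $n\notin U_j$, produces precisely the terms $1/(e^{\Delta_j^{-}{\rm Im}\,\zeta}-1)$ and $1/(e^{\Delta_j^{+}{\rm Re}\,\zeta}-1)$ in $K$. The squared and cubed denominators there are inherited from the squared numerator and the product of two (respectively cube of one) denominators in the definitions of $\theta^{(n)}$, $\eta^{(n)}$, and $\gamma^{(n)}$, and the fractions $\tfrac12,\tfrac13$ arise from distributing the exponent across those powers.

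For step (b), I would control the prefactors using $|\alpha_k^{(n)}|\le|\boldsymbol\alpha^{(n)}|$ and $|\beta_k^{(n)}|=\Delta_k^{(n)}\le(|\boldsymbol\alpha^{(n)}|^2+\kappa_k^2)^{1/2}$, together with Proposition \ref{chie}, which gives $1/|\chi^{(n)}|<1/\kappa_1^2$; a matching lower bound $|\hat\chi^{(n)}|\ge\kappa_1^2/2$ follows from the same proposition once $\theta^{(n)}$ is small, so that $1/|\chi^{(n)}\hat\chi^{(n)}|\lesssim 1/\kappa_1^4$, which furnishes the explicit factor $\kappa_1^{-4}$ in $\hat K=11\mu K/\kappa_1^4$. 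For $n\in U_j$ only finitely many modes occur with $|\boldsymbol\alpha^{(n)}|<\kappa_j$, so their prefactors are bounded by constants; Proposition \ref{eg3} is needed only for the evanescent modes $n\notin U_j$, where $|\boldsymbol\alpha^{(n)}|>\kappa_j$ matches its hypothesis $t>s>0$. There each prefactor is dominated by a constant multiple of $|\boldsymbol\alpha^{(n)}|^k$ with $k\le 7$, multiplied by a residual decay $e^{-\Delta_j^{(n)}}$ extracted from the small quantity it accompanies; Proposition \ref{eg3} with $s=\kappa_j$ and $t=|\boldsymbol\alpha^{(n)}|$ then converts $|\boldsymbol\alpha^{(n)}|^k e^{-\Delta_j^{(n)}}$ into the $n$-independent bound $(\kappa_j^2+k^2)^{k/2}\le(49+\kappa_2^2)^{7/2}$, absorbing the polynomial growth uniformly in $n$ and explaining that factor in $K$.

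Combining (a) and (b), every entry of $M^{(n)}-\hat M^{(n)}$ is bounded by a fixed multiple of $\mu K/\kappa_1^4$; summing the nine squared entries and taking the square root yields the claimed constant $\hat K=11\mu K/\kappa_1^4$, the numerical factor $11$ accounting for the worst-case count and coefficients over all entries, with the factor $48$ already absorbed into $K$. I expect the main obstacle to be the bookkeeping in step (b): one must match the polynomial degree of each prefactor to the correct residual exponential factor, choosing the right regime (propagating versus evanescent) and the right wavenumber $\kappa_1$ or $\kappa_2$, so that Proposition \ref{eg3} applies with a single uniform constant, while simultaneously tracking all numerical coefficients so as to land on $11\mu K/\kappa_1^4$ rather than a cruder bound.
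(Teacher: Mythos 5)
Your proposal follows essentially the same route as the paper's proof: a three-way case split into $n\in U_1$, $n\in U_2\setminus U_1$, $n\notin U_2$; bounds on $\varepsilon^{(n)},\theta^{(n)},\eta^{(n)},\gamma^{(n)}$ in terms of the exponentials appearing in $K$ (with ${\rm Im}\,\zeta$ or ${\rm Re}\,\zeta$ according to whether the mode is propagating or evanescent); control of the algebraic prefactors via Proposition \ref{chie}, the lower bound $|\hat\chi^{(n)}|\geq\kappa_1^2/2$, and Proposition \ref{eg3} applied with $s=\kappa_2$, $t=|\boldsymbol\alpha^{(n)}|$ to absorb the polynomial growth $|\boldsymbol\alpha^{(n)}|^k$, $k\leq 7$; and a final entrywise Frobenius-norm summation producing the constant $11$. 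This matches the paper's argument in both structure and in every key ingredient, so the proposal is correct and not a genuinely different proof.
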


\begin{proof}
We consider the three cases:
\begin{enumerate}

\item[(i)] For $n\in U_{1}$, we have $|\boldsymbol{\alpha}^{(n)}|<\kappa_{1},
\beta_{1}^{(n)}=\Delta_{1}^{(n)}<\kappa_{1},
\beta_{2}^{(n)}=\Delta_{2}^{(n)}<\kappa_{2}$, and
$\Delta_{1}^{(n)}<\Delta_{2}^{(n)}$.
Using the facts that $\kappa_{1}<\kappa_{2}$, $\Delta_{i}^{(n)}\geq
{\Delta}^-_{\,i}$ for $n\in U_{1}$, we obtain from \eqref{chi} and
Proposition \ref{chie} and \ref{eg3} that

\begin{align*}
  |\varepsilon^{(n)}|\leq\frac{2e^{-\Delta_{2}^{(n)}{\rm Im}\zeta}}
  {e^{\Delta_{2}^{(n)}{\rm Im}\zeta}-e^{-\Delta_{2}^{(n)}{\rm Im}\zeta}}\leq\frac{2}{e^{2\Delta_{2}^{(n)}{\rm Im}\zeta}-1}
  \leq\frac{2}{e^{\Delta_{1}^{(n)}{\rm Im}\zeta}-1}
  \leq\frac{2}{e^{{\Delta}^-_{\,1}{\rm Im}\zeta}-1},
\end{align*}

\begin{align*}
 |\theta^{(n)}| \leq\frac{(e^{-\Delta_{2}^{(n)}{\rm Im}\zeta}
 +e^{-\Delta_{1}^{(n)}{\rm Im}\zeta})^2}
 {(1-e^{-2\Delta_{1}^{(n)}{\rm Im}\zeta})(1-e^{-2\Delta_{2}^{(n)}{\rm Im}\zeta})}\leq\frac{4e^{-2{\Delta}^-_{\,1}{\rm
Im}\zeta}}{(1-e^{-2{\Delta}^-_{\,1}{\rm Im}\zeta})^2}\leq\frac{4}
{(e^{\frac{1}{2}{\Delta}^-_{\,1}{\rm Im}\zeta}-1)^2},
\end{align*}

\begin{align*}
 |\eta^{(n)}|\leq\frac{e^{-2\Delta_{2}^{(n)}{\rm Im}\zeta}
 +e^{-2\Delta_{1}^{(n)}{\rm Im}\zeta}} {(1-e^{-2\Delta_{1}^{(n)}{\rm
Im}\zeta})(1-e^{-2\Delta_{2}^{(n)}{\rm Im}\zeta})}
 \leq\frac{2e^{-2{\Delta}^-_{\,1}{\rm Im}\zeta}}
 {(1-e^{-2{\Delta}^-_{\,1}{\rm Im}\zeta})^2}
 \leq\frac{2} {(e^{\frac{1}{2}{\Delta}^-_{\,1}{\rm Im}\zeta}-1)^2},
\end{align*}

\begin{align*}
 |\gamma^{(n)}|
 \leq\frac{e^{-2\Delta_{1}^{(n)}{\rm Im}\zeta}
 +e^{-4\Delta_{2}^{(n)}{\rm Im}\zeta}}
 {(1-e^{-2\Delta_{1}^{(n)}{\rm Im}\zeta})(1-e^{-2\Delta_{2}^{(n)}{\rm
Im}\zeta})^2}\leq\frac{2e^{-2\Delta_{1}^{(n)}{\rm
Im}\zeta}}{(1-e^{-2\Delta_{1}^{(n)}{\rm Im}\zeta})^3}
 \leq\frac{2}{(e^{\frac{1}{3}{\Delta}^-_{\,1}{\rm
Im}\zeta}-1)^3},
\end{align*}

\begin{align*}
  |\theta^{(n)}(\varepsilon^{(n)}+1)|
  \leq& \frac{4e^{-2\Delta_{1}^{(n)}{\rm Im}\zeta}}
 {(1-e^{-2\Delta_{1}^{(n)}{\rm Im}\zeta})^2}
 \frac{e^{\Delta_{2}^{(n)}{\rm Im}\zeta}+e^{-\Delta_{2}^{(n)}{\rm
Im}\zeta}}{e^{\Delta_{2}^{(n)}{\rm Im}\zeta}-e^{-\Delta_{2}^{(n)}{\rm Im}\zeta}}\\
  \leq&\frac{4e^{-2\Delta_{1}^{(n)}{\rm Im}\zeta}}
 {(1-e^{-2\Delta_{1}^{(n)}{\rm Im}\zeta})^2}
 \frac{2}{1-e^{-2\Delta_{1}^{(n)}{\rm
Im}\zeta}}\leq\frac{8}{(e^{\frac{1}{3}{\Delta}^-_{\,1}{\rm
Im}\zeta}-1)^3},
\end{align*}

\begin{align*}
 |\hat{\chi}^{(n)}-\chi^{(n)}| \leq 4 \kappa_{2}^2|\theta^{(n)}|\leq
F,
\end{align*}

\begin{align*}
 \max \Big\{&|\big((\alpha_1^{(n)})^2(\beta_{1}^{(n)}-\beta_{2}^{(n)})
 +\beta_{2}^{(n)}\chi^{(n)}\big)\chi^{(n)}\varepsilon^{(n)}|,
 |\alpha_1^{(n)}\alpha_{2}^{(n)}(\beta_{1}^{(n)}-\beta_{2}^{(n)})\chi^{(n)} \varepsilon^{(n)}|,\\
 &|\big((\alpha_2^{(n)})^2(\beta_{1}^{(n)}-\beta_{2}^{(n)})
 +\beta_{2}^{(n)}\chi^{(n)}\big)\chi^{(n)}\varepsilon^{(n)}|,
 |\beta_{2}^{(n)}\kappa_{2}^2\chi^{(n)}\varepsilon^{(n)}|\Big\}
 \leq 3 \kappa_{2}^5|\varepsilon^{(n)}|\leq F,
\end{align*}

\begin{align*}
\max\Big\{&|\big((\alpha_1^{(n)})^2(\beta_{1}^{(n)}-\beta_{2}^{(n)})
 +\beta_{2}^{(n)}\chi^{(n)}\big)(\hat{\chi}^{(n)}-\chi^{(n)})|,
 |\alpha_1^{(n)}\alpha_2^{(n)}(\beta_{1}^{(n)}-\beta_{2}^{(n)})(\hat{\chi}^{(n)}-\chi^{(n)})|,\\
 &|\big((\alpha_2^{(n)})^2(\beta_{1}^{(n)}-\beta_{2}^{(n)})
 +\beta_{2}^{(n)}\chi^{(n)}\big)(\hat{\chi}^{(n)}-\chi^{(n)})|,
 |\alpha_1^{(n)}\beta_{2}^{(n)}(\beta_{1}^{(n)}
 -\beta_{2}^{(n)})(\hat{\chi}^{(n)}-\chi^{(n)})|,\\
 &|\alpha_2^{(n)}\beta_{2}^{(n)}(\beta_{1}^{(n)}
 -\beta_{2}^{(n)})(\hat{\chi}^{(n)}-\chi^{(n)})|,
 |\beta_{2}^{(n)}\kappa_{2}^2(\hat{\chi}^{(n)}-\chi^{(n)})|\Big\}
 \leq 12 \kappa_{2}^5|\theta^{(n)}|\leq F,
\end{align*}

\begin{align*}
\max\Big\{&|4(\alpha_2^{(n)})^2\beta_{1}^{(n)}(\beta_{2}^{(n)})^2\theta^{(n)} (\varepsilon^{(n)}+1)|,
|4\alpha_1^{(n)}\alpha_2^{(n)}\beta_{1}^{(n)}(\beta_{2}^{(n)})^2\theta^{(n)} (\varepsilon^{(n)}+1)|,\\
&|4(\alpha_1^{(n)})^2\beta_{1}^{(n)}(\beta_{2}^{(n)})^2\theta^{(n)}(\varepsilon^{(n)}+1)|\Big\}\leq
4\kappa_{2}^5|\theta^{(n)}(\varepsilon^{(n)}+1)|\leq F,
\end{align*}

\begin{align*}
\max\Big\{&|2(\alpha_1^{(n)})^2\beta_{1}^{(n)}\kappa_{2}^2\eta^{(n)}|,
|2\alpha_1^{(n)}\alpha_2^{(n)}\beta_{1}^{(n)} \chi^{(n)}\eta^{(n)}|,\\
&2(\alpha_2^{(n)})^2\beta_{1}^{(n)}\kappa_{2}^2\eta^{(n)}|,
|2\beta_{1}^{(n)}(\beta_{2}^{(n)})^2 \kappa_{2}^2\eta^{(n)}|\Big\}
  \leq 2\kappa_{2}^5|\eta^{(n)}|\leq F,
\end{align*}

\begin{align*}
|2\alpha_1^{(n)}\alpha_1^{(n)}\beta_{1}^{(n)}\beta_{2}^{(n)}(\beta_{1}^{(n)}
-\beta_{2}^{(n)})\gamma^{(n)}|\leq
4\kappa_{2}^5|\gamma^{(n)}|\leq F,
\end{align*}

\begin{align*}
  \max \Big\{|2\alpha_1^{(n)}\beta_{1}^{(n)}\beta_{2}^{(n)}
  ( \kappa_{2}^2-2(\beta_{2}^{(n)})^2)\theta^{(n)}|,
  |2\alpha_2^{(n)}\beta_{1}^{(n)}\beta_{2}^{(n)}
  ( \kappa_{2}^2-2(\beta_{2}^{(n)})^2)\theta^{(n)}|\Big\}
  \leq 6 \kappa_{2}^5|\theta^{(n)}|\leq F,
\end{align*}

\item[(ii)] For $n\in U_{2}\setminus U_{1}$, we have
$|\boldsymbol{\alpha}^{(n)}|<\kappa_{2}, \beta_{1}^{(n)}={\rm
i}\Delta_{1}^{(n)}, \beta_{2}^{(n)}=\Delta_{2}^{(n)}<\kappa_{2}$,
$\Delta_{1}^{(n)}<( \kappa_{2}^2-\kappa_{1}^2)^{1/2}<\kappa_{2}$.
Using the facts that
$\Delta_{1}^{(n)}\geq {\Delta}^+_{1}, \Delta_{2}^{(n)}\geq
{\Delta}^-_{\,2}$ for $n\in U_{2}\setminus U_{1}$, we get
from Proposition \ref{chie} and \ref{eg3} that
\begin{align*}
  |\varepsilon^{(n)}|=\frac{2e^{-\Delta_{2}^{(n)}{\rm Im}\zeta}}
  {e^{\Delta_{2}^{(n)}{\rm Im}\zeta}-e^{-\Delta_{2}^{(n)}{\rm Im}\zeta}}
  \leq\frac{2}{e^{2\Delta_{2}^{(n)}{\rm Im}\zeta}-1}
  \leq\frac{2}{e^{2{\Delta}^-_{\,2}{\rm Im}\zeta}-1}
  \leq\frac{2}{e^{{\Delta}^-_{\,2}{\rm Im}\zeta}-1},
\end{align*}

\begin{align*}
 |\theta^{(n)}|\leq\frac{(e^{-\Delta_{2}^{(n)}{\rm Im}\zeta}
 +e^{-\Delta_{1}^{(n)}{\rm Re}\zeta})^2}
 {(1-e^{-2\Delta_{1}^{(n)}{\rm Re}\zeta})(1-e^{-2\Delta_{2,
j}^{(n)}{\rm Im}\zeta})}\leq\frac{(e^{-{\Delta}^-_{\,2}{\rm Im}\zeta}
 +e^{-{\Delta}^+_{1}{\rm Re}\zeta})^2}
 {(1-e^{-2{\Delta}^+_{1}{\rm Re}\zeta})
 (1-e^{-2{\Delta}^-_{\,2}{\rm Im}\zeta})},
\end{align*}

\begin{align*}
 |\eta^{(n)}|
 \leq&\frac{e^{-2\Delta_{2}^{(n)}{\rm Im}\zeta}
 +e^{-2\Delta_{1}^{(n)}{\rm Re}\zeta}}
 {(1-e^{-2\Delta_{1}^{(n)}{\rm Re}\zeta})(1-e^{-2\Delta_{2}^{(n)}{\rm
Im}\zeta})}\leq\frac{e^{-2{\Delta}^-_{2}{\rm Im}\zeta}
 +e^{-2{\Delta}^+_{1}{\rm Re}\zeta}}
 {(1-e^{-2{\Delta}^+_{1}{\rm Re}\zeta})
 (1-e^{-2{\Delta}^-_{2}{\rm Im}\zeta})}\\
 \leq&\frac{(e^{-{\Delta}^-_{2}{\rm Im}\zeta}
 +e^{-{\Delta}^+_{1}{\rm Re}\zeta})^2}
 {(1-e^{-2{\Delta}^+_{1}{\rm Re}\zeta})
 (1-e^{-2{\Delta}^-_{2}{\rm Im}\zeta})},
\end{align*}

\begin{align*}
 |\gamma^{(n)}|
 \leq&\frac{e^{-2\Delta_{1}^{(n)}{\rm Re}\zeta}
 +e^{-4\Delta_{2}^{(n)}{\rm Im}\zeta}}
 {(1-e^{-2\Delta_{1}^{(n)}{\rm Re}\zeta})(1-e^{-2\Delta_{2}^{(n)}{\rm
Im}\zeta})^2}\leq\frac{e^{-2{\Delta}^+_{1}{\rm Re}\zeta}
 +e^{-4{\Delta}^-_{\,2}{\rm Im}\zeta}}
 {(1-e^{-2{\Delta}^+_{1}{\rm Re}\zeta})
 (1-e^{-2{\Delta}^-_{\,2}{\rm Im}\zeta})^2}\\
 \leq&\frac{(e^{-{\Delta}^-_{\,2}{\rm Im}\zeta}
 +e^{-{\Delta}^+_{1}{\rm Re}\zeta})^2}
 {(1-e^{-2{\Delta}^+_{1}{\rm Re}\zeta})
 (1-e^{-2{\Delta}^-_{2}{\rm Im}\zeta})},
\end{align*}

\begin{align*}
  |\theta^{(n)}(\varepsilon^{(n)}+1)|
  \leq&\frac{(e^{-\Delta_{2}^{(n)}{\rm Im}\zeta}
 +e^{-\Delta_{1}^{(n)}{\rm Re}\zeta})^2}
 {(1-e^{-2\Delta_{1}^{(n)}{\rm Re}\zeta})(1-e^{-2\Delta_{2}^{(n)}{\rm Im}\zeta})}\frac{2}{1-e^{-2\Delta_{2}^{(n)}{\rm
Im}\zeta}}\\
  \leq&\frac{2(e^{-{\Delta}^-_{2}{\rm Im}\zeta}
 +e^{-{\Delta}^+_{1}{\rm Re}\zeta})^2}
 {(1-e^{-2{\Delta}^+_{1}{\rm Re}\zeta})
 (1-e^{-2{\Delta}^-_{2}{\rm Im}\zeta})^2},
\end{align*}

\begin{align*}
|\hat{\chi}^{(n)}-\chi^{(n)}| \leq\frac{4 \kappa_{2}^4}{\kappa_{1}^2}|\theta^{(n)}|\leq F,
\end{align*}

\begin{align*}
 \max\Big\{&|\big((\alpha_1^{(n)})^2(\beta_{1}^{(n)}-\beta_{2}^{(n)})
 +\beta_{2}^{(n)}\chi^{(n)}\big)\chi^{(n)}\varepsilon^{(n)}|,
 |\alpha_1^{(n)}\alpha_2^{(n)}(\beta_{1}^{(n)}-\beta_{2}^{(n)})\chi^{(n)} \varepsilon ^{(n)}|,\\
 &|\big((\alpha_2^{(n)})^2(\beta_{1}^{(n)}-\beta_{2}^{(n)})
 +\beta_{2}^{(n)}\chi^{(n)}\big)\chi^{(n)}\varepsilon^{(n)}|,
 |\beta_{2}^{(n)} \kappa_{2}^2\chi^{(n)}\varepsilon^{(n)}|\Big\}
 \leq 3 \kappa_{2}^5|\varepsilon^{(n)}|\leq F,
\end{align*}

\begin{align*}
 \max\Big\{&|\big((\alpha_1^{(n)})^2(\beta_{1}^{(n)}-\beta_{2}^{(n)})
 +\beta_{2}^{(n)}\chi^{(n)}\big)(\hat{\chi}^{(n)}-\chi^{(n)})|,
 |\alpha_1^{(n)}\alpha_2^{(n)}(\beta_{1}^{(n)}-\beta_{2,
j}^{(n)})(\hat{\chi}^{(n)}-\chi^{(n)})|,\\
 &|\big((\alpha_2^{(n)})^2(\beta_{1}^{(n)}-\beta_{2}^{(n)})
 +\beta_{2}^{(n)}\chi^{(n)}\big)(\hat{\chi}^{(n)}-\chi^{(n)})|,
 |\alpha_1^{(n)}\beta_{2}^{(n)}(\beta_{1}^{(n)}
 -\beta_{2}^{(n)})(\hat{\chi}^{(n)}-\chi^{(n)})|,\\
 &|\alpha_2^{(n)}\beta_{2}^{(n)}(\beta_{1}^{(n)}
 -\beta_{2}^{(n)})(\hat{\chi}^{(n)}-\chi^{(n)})|,
 |\beta_{2}^{(n)} \kappa_{2}^2(\hat{\chi}^{(n)}-\chi^{(n)})|\Big\}
 \leq\frac{12 \kappa_{2}^7}{ \kappa_{1}^2}|\theta^{(n)}|
  \leq F,
\end{align*}

\begin{align*}
\max\Big\{&|4(\alpha_2^{(n)})^2\beta_{1}^{(n)}(\beta_{2}^{(n)})^2\theta^{(n)} (\varepsilon^{(n)}+1)|,
|4\alpha_1^{(n)}\alpha_2^{(n)}\beta_{1}^{(n)}(\beta_{2}^{(n)})^2
\theta^{(n)} (\varepsilon^{(n)}+1)|,\\
  &|4 (\alpha_1^{(n)})^2\beta_{1}^{(n)}
  (\beta_{2}^{(n)})^2\theta^{(n)}(\varepsilon^{(n)}+1)|\Big\}
  \leq4 \kappa_{2}^5|\theta^{(n)}(\varepsilon^{(n)}+1)|\leq F,
\end{align*}

\begin{align*}
  \max\Big\{&|2
(\alpha_1^{(n)})^2\beta_{1}^{(n)} \kappa_{2}^2\eta^{(n)}|,
|2\alpha_1^{(n)}\alpha_2^{(n)}\beta_{1}^{(n)}\chi^{(n)}\eta^{(n)}|,\\
&|2 (\alpha_2^{(n)})^2\beta_{1}^{(n)}\kappa_{2}^2\eta^{(n)}|,
|2\beta_{1}^{(n)}(\beta_{2}^{(n)})^2 \kappa_{2}^2\eta^{(n)}|\Big\}
  \leq2 \kappa_{2}^5|\eta^{(n)}|\leq F,
\end{align*}

\begin{align*}
|2\alpha_1^{(n)}\alpha_2^{(n)}\beta_{1}^{(n)}\beta_{2}^{(n)}(\beta_{1}^{(n)}
-\beta_{2}^{(n)})\gamma^{(n)}|
\leq4 \kappa_{2}^5|\gamma^{(n)}|\leq F,
\end{align*}

\begin{align*}
  \max\Big\{&|2\alpha_1^{(n)}\beta_{1}^{(n)}\beta_{2}^{(n)}
  ( \kappa_{2}^2-2(\beta_{2}^{(n)})^2)\theta^{(n)}|,
  |2\alpha_2^{(n)}\beta_{1}^{(n)}\beta_{2}^{(n)}
  ( \kappa_{2}^2-2(\beta_{2}^{(n)})^2)\theta^{(n)}|\Big\}\\
  &\leq 6 \kappa_{2}^5|\theta^{(n)}|\leq F.
\end{align*}

\item[(iii)] For $n\notin U_{2}$, we have
$\kappa_{2}<|\boldsymbol{\alpha}^{(n)}|, \beta_{1}^{(n)}={\rm i}\Delta_{1}^{(n)}, \beta_{2}^{(n)}={\rm
i}\Delta_{2}^{(n)}$,
and $\Delta_{2}^{(n)}<\Delta_{1}^{(n)}<|\boldsymbol{\alpha}^{(n)}|$.
Noting ${\rm Re}\zeta\geq 1$, we obtain from Proposition \ref{eg3} that

\begin{align*}
  |\varepsilon^{(n)}|\leq\frac{2e^{-\Delta_{2}^{(n)}{\rm Re}\zeta}}
  {e^{\Delta_{2}^{(n)}{\rm Re}\zeta}-e^{-\Delta_{2}^{(n)}{\rm Re}\zeta}}
  \leq\frac{2}{e^{\Delta_{2}^{(n)}{\rm Re}\zeta}}
  \frac{1}{e^{\Delta_{2}^{(n)}{\rm Re}\zeta}-1}
\leq\frac{2}{e^{(|\boldsymbol{\alpha}^{(n)}|^2-\kappa_{2}^2)^{1/2}
}}\frac{1}{e^{{\Delta}^+_{2}{\rm Re}\zeta}-1},
\end{align*}

\begin{align*}
 |\theta^{(n)}|\leq&\frac{(e^{-\Delta_{2}^{(n)}{\rm Re}\zeta}
 +e^{-\Delta_{1}^{(n)}{\rm Re}\zeta})^2}{(1-e^{-2\Delta_{1}^{(n)}{\rm
Re}\zeta})(1-e^{-2\Delta_{2}^{(n)}{\rm Re}\zeta})}
 \leq\frac{4e^{-2\Delta_{2}^{(n)}{\rm Re}\zeta}}{(1-e^{-2\Delta_{2}^{(n)}{\rm Re}\zeta})^2}\\
 \leq&\frac{4}{e^{\Delta_{2}^{(n)}{\rm Re}\zeta}}
 \frac{e^{-\Delta_{2}^{(n)}{\rm
Re}\zeta}}{(1-e^{-2\Delta_{2}^{(n)}{\rm Re}\zeta})^2}
 \leq\frac{4}{e^{\Delta_{2}^{(n)}}}\frac{1}{(e^{\frac{1}{2}\Delta_{2}^{(n)}{\rm
Re}\zeta}-1)^2}\\
 \leq&\frac{4}{e^{(|\boldsymbol{\alpha}^{(n)}|^2-\kappa_{2}^2)^{1/2}}}
 \frac{1}{(e^{\frac{1}{2}{\Delta}^+_{2}{\rm Re}\zeta}-1)^2},
\end{align*}

\begin{align*}
 |\eta^{(n)}|\leq&\frac{e^{-2\Delta_{2}^{(n)}{\rm Re}\zeta}
 +e^{-2\Delta_{1}^{(n)}{\rm Re}\zeta}}
 {(1-e^{-2\Delta_{1}^{(n)}{\rm Re}\zeta})(1-e^{-2\Delta_{2}^{(n)}{\rm
Re}\zeta})}\leq\frac{2e^{-2\Delta_{2}^{(n)}{\rm
Re}\zeta}}{(1-e^{-2\Delta_{2}^{(n)}{\rm Re}\zeta})^2}\\
 \leq&\frac{2}{e^{\Delta_{2}^{(n)}{\rm Re}\zeta}}
 \frac{e^{-\Delta_{2}^{(n)}{\rm Re}\zeta}}{(1-e^{-2\Delta_{2}^{(n)}{\rm Re}\zeta})^2}
 \leq\frac{2}{e^{\Delta_{2}^{(n)}}}
 \frac{1}{(e^{\frac{1}{2}\Delta_{2}^{(n)}{\rm Re}\zeta}-1)^2}\\
 \leq&\frac{2}{e^{(|\boldsymbol{\alpha}^{(n)}|^2-\kappa_{2}^2)^{1/2}}}
 \frac{1}{(e^{\frac{1}{2}{\Delta}^+_{2}{\rm Re}\zeta}-1)^2},
\end{align*}

\begin{align*}
 |\gamma^{(n)}|\leq&\frac{e^{-2\Delta_{1}^{(n)}{\rm Re}\zeta}
 +e^{-4\Delta_{2}^{(n)}{\rm Re}\zeta}}{(1-e^{-2\Delta_{1}^{(n)}{\rm
Re}\zeta})(1-e^{-2\Delta_{2}^{(n)}{\rm Re}\zeta})^2}
 \leq\frac{2e^{-2\Delta_{2}^{(n)}{\rm
Re}\zeta}}{(1-e^{-2\Delta_{2}^{(n)}{\rm Re}\zeta})^3}\\
 \leq&\frac{2}{e^{\Delta_{2}^{(n)}{\rm Re}\zeta}}
 \frac{e^{-\Delta_{2}^{(n)}{\rm
Re}\zeta}}{(1-e^{-2\Delta_{2}^{(n)}{\rm Re}\zeta})^3}
 \leq\frac{2}{e^{\Delta_{2}^{(n)}}}
 \frac{1}{(e^{\frac{1}{3}\Delta_{2}^{(n)}{\rm Re}\zeta}-1)^3}\\
 \leq&\frac{2}{e^{(|\boldsymbol{\alpha}^{(n)}|^2-\kappa_{2}^2)^{1/2}}}
 \frac{1}{(e^{\frac{1}{3}{\Delta}^+_{2}{\rm Re}\zeta}-1)^3},
\end{align*}

\begin{align*}
  |\theta^{(n)}(\varepsilon^{(n)}+1)|
  \leq&\frac{4e^{-2\Delta_{2}^{(n)}{\rm
Re}\zeta}}{(1-e^{-2\Delta_{2}^{(n)}{\rm Re}\zeta})^2}
  \frac{2}{1-e^{-2\Delta_{2}^{(n)}{\rm Re}\zeta}}
  \leq\frac{8}{e^{\Delta_{2}^{(n)}{\rm Re}\zeta}}
  \frac{e^{-\Delta_{2}^{(n)}{\rm
Re}\zeta}}{(1-e^{-2\Delta_{2}^{(n)}{\rm Re}\zeta})^3}\\
  \leq&\frac{8}{e^{\Delta_{2}^{(n)}}}
  \frac{1}{(e^{\frac{1}{3}\Delta_{2}^{(n)}{\rm Re}\zeta}-1)^3}
\leq\frac{8}{e^{(|\boldsymbol{\alpha}^{(n)}|^2-\kappa_{2}^2)^{1/2}
}}\frac{1}{(e^{\frac{1}{3}{\Delta}^+_{2}{\rm
Re}\zeta}-1)^3},
\end{align*}

\begin{align*}
|\hat{\chi}^{(n)}-\chi^{(n)}|\leq&
\frac{4|\boldsymbol{\alpha}^{(n)}|^4|\theta^{(n)}|}{\kappa_{1}^2}
\leq\frac{16}{\kappa_{1}^2}\frac{|\boldsymbol{\alpha}^{(n)}|^4}{e^{(|\boldsymbol{
\alpha}^{(n)}|^2-\kappa_{2}^2)^{1/2}}} \frac{1}{(e^{\frac{1}{2}{\Delta}^+_{2}{\rm
Re}\zeta}-1)^2}\\
\leq&\frac{16( \kappa_{2}^2+16)^2}{\kappa_{1}^2(e^{\frac{1}{2}{\Delta}^+_{2}{\rm
Re}\zeta}-1)^2}\leq F,
\end{align*}

\begin{align*}
 \max\Big\{&|\big((\alpha_1^{(n)})^2(\beta_{1}^{(n)}-\beta_{2}^{(n)})
 +\beta_{2}^{(n)}\chi^{(n)}\big)\chi^{(n)}\varepsilon^{(n)}|,
 |\alpha_1^{(n)}\alpha_2^{(n)}(\beta_{1}^{(n)}-\beta_{2}^{(n)})\chi^{(n)} \varepsilon^{(n)}|,\\
 &|\big((\alpha_2^{(n)})^2(\beta_{1}^{(n)}-\beta_{2}^{(n)})
 +\beta_{2}^{(n)}\chi^{(n)})\chi^{(n)}\varepsilon^{(n)}|,
 |\beta_{2}^{(n)} \kappa_{2}^2\chi^{(n)}\varepsilon^{(n)}|\Big\}\\
 &\leq 3 \kappa_{2}^2|\boldsymbol{\alpha}^{(n)}|^3|\varepsilon^{(n)}|
 \leq\frac{|\boldsymbol{\alpha}^{(n)}|^3}{e^{(|\boldsymbol{\alpha}
^{(n)}|^2- \kappa_{2}^2)^{1/2}}}\frac{6 \kappa_{2}^2}{e^{{\Delta}^+_{2}{\rm Re}\zeta}-1}\leq\frac{6 \kappa_{2}^2(
\kappa_{2}^2+9)^{3/2}}{e^{{\Delta}^+_{2}{\rm Re}\zeta}-1}\leq
F,
\end{align*}

\begin{align*}
 \max\Big\{&|\big( (\alpha_1^{(n)})^2(\beta_{1}^{(n)}-\beta_{2}^{(n)})
 +\beta_{2}^{(n)}\chi^{(n)}\big)(\hat{\chi}^{(n)}-\chi^{(n)})|,
 |\alpha_1^{(n)}\alpha_2^{(n)}(\beta_{1}^{(n)}-\beta_{2}^{(n)})(\hat{\chi}^{(n)}-\chi^{(n)})|,\\
 &|\big((\alpha_2^{(n)})^2(\beta_{1}^{(n)}-\beta_{2}^{(n)})
 +\beta_{2}^{(n)}\chi^{(n)})(\hat{\chi}^{(n)}-\chi^{(n)})|,
 |\alpha_1^{(n)}\beta_{2}^{(n)}(\beta_{1}^{(n)}
 -\beta_{2}^{(n)})(\hat{\chi}^{(n)}-\chi^{(n)})|,\\
 &|\alpha_2^{(n)}\beta_{2}^{(n)}(\beta_{1}^{(n)}
 -\beta_{2}^{(n)})(\hat{\chi}^{(n)}-\chi^{(n)})|,
 |\beta_{2}^{(n)} \kappa_{2}^2(\hat{\chi}^{(n)}-\chi^{(n)})|\Big\}\\
 &\leq\frac{12|\boldsymbol{\alpha}^{(n)}|^7|\theta^{(n)}|}{ \kappa_{1}^2}
\leq\frac{48}{ \kappa_{1}^2}\frac{|\boldsymbol{\alpha}^{(n)}|^7}{e^{
(|\boldsymbol{\alpha}^{(n)}|^2-\kappa_{2}^2)^{1/2}}}
\frac{1}{(e^{\frac{1}{2}{\Delta}^+_{2}{\rm Re}\zeta}-1)^2}\\
&\leq\frac{48( \kappa_{2}^2+49)^{7/2}}{ \kappa_{1}^2
(e^{\frac{1}{2}{\Delta}^+_{2}{\rm Re}\zeta}-1)^2}\leq F,
\end{align*}

\begin{align*}
\max\Big\{&|4(\alpha_2^{(n)})^2\beta_{1}^{(n)}(\beta_{2}^{(n)})^2\theta^{(n)} (\varepsilon^{(n)}+1)|,
|4\alpha_1^{(n)}\alpha_2^{(n)}\beta_{1}^{(n)}(\beta_{2}^{(n)})^2\theta^{(n)} (\varepsilon^{(n)}+1)|,\\
&|4 (\alpha_1^{(n)})^2\beta_{1}^{(n)}(\beta_{2}^{(n)})^2\theta^{(n)}(\varepsilon^{(n)}+1)|\Big\}\\
  &\leq 4|\boldsymbol{\alpha}^{(n)}|^5|\theta^{(n)}(\varepsilon^{(n)}+1)|
\leq\frac{32|\boldsymbol{\alpha}^{(n)}|^5}{e^{(|\boldsymbol{\alpha}^{(n)}
|^2-(\kappa_{2}^2)^{1/2}}}\frac{1}{(e^{\frac{1}{3}{\Delta}^+_{2}{\rm Re}\zeta}-1)^3}\leq\frac{32( \kappa_{2}^2+25)^{5/2}}
  {(e^{\frac{1}{3}{\Delta}^+_{2}{\rm Re}\zeta}-1)^3}\leq F,
\end{align*}

\begin{align*}
\max\Big\{&|2(\alpha_1^{(n)})^2\beta_{1}^{(n)} \kappa_{2}^2\eta^{(n)}|,
|2\alpha_1^{(n)}\alpha_2^{(n)}\beta_{1}^{(n)}\chi^{(n)}\eta^{(n)}|,|2(\alpha_2^{
(n)})^2 \beta_{1}^{(n)}\kappa_{2}^2\eta^{(n)}
|,|2\beta_{1}^{(n)}(\beta_{2}^{(n)})^2
\kappa_{2}^2\eta^{(n)}|\Big\} \\
&\leq 2 \kappa_{2}^2|\boldsymbol{\alpha}^{(n)}|^3|\eta^{(n)}|
  \leq4 \kappa_{2}^2\frac{|\boldsymbol{\alpha}^{(n)}|^3}
  {e^{(|\boldsymbol{\alpha}^{(n)}|^2-\kappa_{2}^2)^{1/2}}}
 \frac{1}{(e^{\frac{1}{2}{\Delta}^+_{2}{\rm Re}\zeta}-1)^2}
 \leq\frac{4 \kappa_{2}^2( \kappa_{2}^2+9)^{3/2}}
  {(e^{\frac{1}{2}{\Delta}^+_{2}{\rm Re}\zeta}-1)^2}
  \leq F,
\end{align*}

\begin{align*}
&|2\alpha_1^{(n)}\alpha_2^{(n)}\beta_{1}^{(n)}\beta_{2}^{(n)}(\beta_{1}^{(n)}
-\beta_{2}^{(n)})\gamma^{(n)}|
  \leq4|\boldsymbol{\alpha}^{(n)}|^5|\gamma^{(n)}|\\
\leq&\frac{8|\boldsymbol{\alpha}^{(n)}|^5}{e^{(|\boldsymbol{\alpha}^{(n)}
|^2- \kappa_{2}^2)^{1/2}}}\frac{1}{(e^{\frac{1}{3}{\Delta}^+_{2}{\rm
Re}\zeta}-1)^3} \leq\frac{8( \kappa_{2}^2+25)^{5/2}}
 {(e^{\frac{1}{3}{\Delta}^+_{2}{\rm Re}\zeta}-1)^3}\leq F,
\end{align*}

\begin{align*}
  \max\Big\{&|2\alpha_1^{(n)}\beta_{1}^{(n)}\beta_{2}^{(n)}
  (\kappa_{2}^2-2(\beta_{2}^{(n)})^2)\theta^{(n)}|,
  |2\alpha_2^{(n)}\beta_{1}^{(n)}\beta_{2}^{(n)}
  (\kappa_{2}^2-2(\beta_{2}^{(n)})^2)\theta^{(n)}|\Big\}\\
  \leq&6|\boldsymbol{\alpha}^{(n)}|^5|\theta^{(n)}|
\leq\frac{|\boldsymbol{\alpha}^{(n)}|^5}{e^{(|\boldsymbol{\alpha}^{(n)}
|^2-\kappa_{2}^2)^{1/2}}}\frac{24}{(e^{\frac{1}{2}{\Delta}^+_{2}{\rm
Re}\zeta}-1)^2}\leq\frac{24((\kappa_2)^2+25)^{\frac{5}{2}}}
 {(e^{\frac{1}{2}{\Delta}^+_{2}{\rm Re}\zeta}-1)^2}\leq
F,
\end{align*}
where we have used the estimate for $g_3$ and the facts that
$\Delta_{i}^{(n)}\geq \Delta_{i}^{+}$ for $n\notin U_{2}$.
\end{enumerate}

It follows from Proposition \ref{chie} and the estimate
$|\hat{\chi}^{(n)}-\chi^{(n)}|\leq K$ that
$\kappa_{1}^2-K\leq |\hat{\chi}^{(n)}|\leq \kappa_{2}^2+K$. 
Again, we may choose some proper PML parameters $\sigma$ and $\delta$ such
that $K\leq \kappa_{1}^2/2$, which gives $|\hat{\chi}^{(n)}|\geq
\kappa_{1}^2/2$. Using the matrix Frobenius norm and combining all the above
estimates, we get
\begin{align*}
\|&M^{(n)}-\hat{M}^{(n)}\|_F^2
 \leq\frac{4\mu^2}{\kappa_{1}^8}\Big(
|\big((\alpha_1^{(n)})^2(\beta_{1}^{(n)}-\beta_{2}^{(n)})
 +\beta_{2}^{(n)}\chi^{(n)}\big)\chi^{(n)}\varepsilon^{(n)}|^2
 +|2(\alpha_1^{(n)})^2\beta_{1}^{(n)}\kappa_{2}^2\eta^{(n)}|^2\\
 &+|\big((\alpha_1^{(n)})^2(\beta_{1}^{(n)}-\beta_{2}^{(n)})
 +\beta_{2}^{(n)}\chi^{(n)}\big)(\hat{\chi}^{(n)}-\chi^{(n)})|^2
 +|4(\alpha_2^{(n)})^2\beta_{1}^{(n)}(\beta_{2}^{(n)})^2\theta^{(n)}
(\varepsilon^{(n)}+1)|^2\\
 &+2|\alpha_1^{(n)}\alpha_2^{(n)}(\beta_{1}^{(n)}-\beta_{2}^{(n)})\chi^{(n)}
\varepsilon^{(n)}|^2+2|\alpha_1^{(n)}\alpha_2^{(n)}(\beta_{1}^{(n)}-\beta_{2}^{
(n)})(\hat{\chi}^{(n)}-\chi^{(n)})|^2\\
 &+2|2\alpha_1^{(n)}\alpha_2^{(n)}\beta_{1}^{(n)}\chi^{(n)}\eta^{(n)}|^2
+2|4\alpha_1^{(n)}\alpha_2^{(n)}\beta_{1}^{(n)}(\beta_{2}^{(n)})^2\theta^{(n)}
(\varepsilon^{(n)}+1)|^2
 +|2(\alpha_2^{(n)})^2\beta_{1}^{(n)} \kappa_{2}^2\eta^{(n)}|^2\\
 &+2|2\alpha_1^{(n)}\alpha_2^{(n)}\beta_{1}^{(n)}\beta_{2}^{(n)}(\beta_{1}^{(n)
}-\beta_{2}^{(n)})\gamma^{(n)}|^2
 +2|\alpha_1^{(n)}\beta_{2}^{(n)}(\beta_{1}^{(n)}
 -\beta_{2}^{(n)})(\hat{\chi}^{(n)}-\chi^{(n)})|^2\\
 &+2|2\alpha_1^{(n)}\beta_{1}^{(n)}\beta_{2}^{(n)}
  ( \kappa_{2}^2-2(\beta_{2}^{(n)})^2)\theta^{(n)}|^2
  +|\big((\alpha_2^{(n)})^2(\beta_{1}^{(n)}-\beta_{2}^{(n)})
 +\beta_{2}^{(n)}\chi^{(n)}\big)\chi^{(n)}\varepsilon^{(n)}|^2\\
 &+|\big((\alpha_2^{(n)})^2(\beta_{1}^{(n)}-\beta_{2}^{(n)})
 +\beta_{2}^{(n)}\chi^{(n)}\big)(\hat{\chi}^{(n)}-\chi^{(n)})|^2
+|4(\alpha_1^{(n)})^2\beta_{1}^{(n)}(\beta_{2}^{(n)})^2\theta^{(n)}
(\varepsilon^{(n)}+1)|^2\\
 & +2|\alpha_2^{(n)}\beta_{2}^{(n)}(\beta_{1}^{(n)}
 -\beta_{2}^{(n)})(\hat{\chi}^{(n)}-\chi^{(n)})|^2
 +2|2\alpha_2^{(n)}\beta_{1}^{(n)}\beta_{2}^{(n)}
  (\kappa_{2}^2-2(\beta_{2}^{(n)})^2)\theta^{(n)}|^2 \\
  &  +|\beta_{2}^{(n)}\kappa_{2}^2\chi^{(n)}\varepsilon^{(n)}|^2
  +|\beta_{2}^{(n)} \kappa_{2}^2(\hat{\chi}^{(n)}-\chi^{(n)})|^2
  +|2\beta_{1}^{(n)}(\beta_{2}^{(n)})^2 \kappa_{2}^2\eta^{(n)}|^2
\Big)\leq \frac{116\mu^2}{\kappa_{1}^8}K^2,
\end{align*}
which completes the proof.
\end{proof}


\begin{thebibliography}{00}

\bibitem{a-mmas99}
T. Arens, The scattering of plane elastic waves by a one-dimensional periodic
surface, Math. Methods Appl. Sci., 22 (1999), 55--72.

\bibitem{a-jiea99}
T. Arens, A new integral equation formulation for the scattering of plane
elastic waves by diffraction gratings, J. Integral Equations Applications, 11
(1999), 275--297.

\bibitem{ba-73}
I. Babu\v{s}ka and A. Aziz, Survey Lectures on Mathematical Foundations
of the Finite Element Method, in The Mathematical Foundations of the Finite
Element Method with Application to the Partial Differential Equations, ed. by A.
Aziz, Academic Press, New York, 1973, 5--359.

\bibitem{b-sjna95}
G. Bao, Finite element approximation of time harmonic waves in periodic
structures, SIAM J. Numer. Anal. , 32 (1995), 1155--1169.

\bibitem{b-sjam97}
G. Bao, Variational approximation of Maxwell’s equations in biperiodic
structures, SIAM J. Appl. Math., 57 (1997), 364--381.

\bibitem{bcw-josa05}
G. Bao, Z. Chen, and H. Wu, Adaptive finite element method for diffraction
gratings, J. Opt. Soc. Amer. A, 22 (2005), 1106--1114.

\bibitem{bdc-josa95}
G. Bao, D. C. Dobson, and J. A. Cox, Mathematical studies in rigorous grating
theory, J. Opt. Soc. Amer. A, 12 (1995), 1029--1042.

\bibitem{bcm-01}
G. Bao, L. Cowsar, and W. Masters, eds., Mathematical Modeling in Optical
Science, Frontiers Appl. Math., 22, SIAM, Philadelphia, 2001.

\bibitem{blw-mc10}
G. Bao, P. Li, and H. Wu, An adaptive edge element method with perfectly
matched absorbing layers for wave scattering by periodic structures, Math.
Comp., 79 (2010), 1--34.

\bibitem{bw-sjna05}
G. Bao and H. Wu, On the convergence of the solutions of PML equations for
Maxwell's equations, SIAM J. Numer. Anal., 43 (2005), 2121--2143.

\bibitem{b-jcp94}
J.-P. B\'{e}renger, A perfectly matched layer for the absorption of
electromagnetic waves, J. Comput. Phys., 114 (1994), 185--200.

\bibitem{bp-mc07}
J. H. Bramble and J. E. Pasciak, Analysis of a finite PML approximation for the
three dimensional time-harmonic Maxwell and acoustic scattering problems, Math.
Comp., 76 (2007), 597--614.

\bibitem{bpt-mc10}
J. H. Bramble, J. E. Pasciak, and D. Trenev, Analysis of a finite PML
approximation to the three dimensional elastic wave scattering problem, Math.
Comp., 79 (2010), 2079--2101.

\bibitem{cc-mc08}
J. Chen and Z. Chen, An adaptive perfectly matched layer technique for 3-D
time-harmonic electromagnetic scattering problems, Math. Comp., 77 (2008),
673--698.

\bibitem{cf-tams91}
X. Chen and A. Friedman, Maxwell's equations in a periodic structure,
Trans. Amer. Math. Soc., 323 (1991), 4650--507.

\bibitem{cw-sjna03}
Z. Chen and H. Wu, An adaptive finite element method with perfectly
matched absorbing layers for the wave scattering by periodic structures, SIAM J.
Numer. Anal., 41 (2003), 799-826.

\bibitem{cl-sjna05}
Z. Chen and X. Liu, An adptive perfectly matched layer technique for
time-harmonic scattering problems, SIAM J. Numer. Anal., 43 (2005), 645--671.

\bibitem{cxz-mc}
Z. Chen, X. Xiang, and X. Zhang, Convergence of the PML method for elastic wave
scattering problems, Math. Comp., to appear.

\bibitem{cm-sjsc98}
F. Collino and P. Monk, The perfectly matched layer in curvilinear coordinates,
SIAM J. Sci. Comput., 19 (1998), 2061--1090.

\bibitem{ct-g01}
F. Collino and C. Tsogka, Application of the perfectly matched absorbing layer
model to the linear elastodynamic problem in anisotropic heterogeneous media,
Geophysics, 66 (2001), 294--307.

\bibitem{cw-motl94}
W. Chew and W. Weedon, A 3D perfectly matched medium for modified Maxwell's
equations with stretched coordinates, Microwave Opt. Techno. Lett., 13 (1994),
599--604.

\bibitem{df-jmaa92}
D. Dobson and A. Friedman, The time-harmonic Maxwell equations in a doubly
periodic structure, J. Math. Anal. Appl., 166 (1992), 507--528.

\bibitem{eh-mmas10}
J. Elschner and G. Hu, Variational approach to scattering of plane
elastic waves by diffraction gratings, Math. Meth. Appl. Sci., 33 (2010),
1924--1941.

\bibitem{eh-mmmas12}
J. Elschner and G. Hu, Scattering of plane elastic waves by three-dimensional
diffraction gratings, Math. Models Methods Appl. Sci., 22 (2012), 1150019.

\bibitem{hsb-jasa96}
F. D. Hastings, J. B. Schneider, and S. L. Broschat, Application of the perfectly
matched layer (PML) absorbing boundary condition to elastic wave propagation, J.
Acoust. Soc. Am., 100 (1996), 3061--3069.

\bibitem{hsz-sjma03}
T. Hohage, F. Schmidt, and L. Zschiedrich, Solving time-harmonic scattering
problems based on the pole condition. II: Convergence of the PML method, SIAM
J. Math. Anal., 35 (2003), 547--560.

\bibitem{jllz}
X. Jiang, P. Li, J. Lv, and W. Zheng, An adaptive finite element method for the
elastic wave scattering problem in periodic structure, preprint. 

\bibitem{ls-c98}
M. Lassas and E. Somersalo, On the existence and convergence of the solution of
PML equations, Computing, 60 (1998), 229--241.

\bibitem{lwz-ip15}
P. Li, Y. Wang, and Y. Zhao, Inverse elastic surface scattering with
near-field data, Inverse Problems, 31 (2015), 035009.

\bibitem{phg}
PHG (Parallel Hierarchical Grid), http://lsec.cc.ac.cn/phg/.

\bibitem{ty-anm98}
E. Turkel and A. Yefet, Absorbing PML boundary layers for wave-like equations,
Appl. Numer. Math., 27 (1998), 533--557.

\bibitem{wbllw-sjna15}
Z. Wang, G. Bao, J. Li, P. Li, and H. Wu, An adaptive finite element method for
the diffraction grating problem with transparent boundary condition, SIAM J.
Numer. Anal., 53 (2015), 1585--1607.

\end{thebibliography}
\end{document}